\newtheorem{theorem}{Theorem}[section]
\newtheorem{conjecture}[theorem]{Conjecture}
\newtheorem{definition}[theorem]{Definition}
\newtheorem{proposition}[theorem]{Proposition}
\def\ConjectureName{Edge Perturbation Conjecture}
\begin{document}

\title[Fourier Levels and Almost Sure Bounds]{Fourier Levels and Almost Sure Bounds on Higher-Order Derivatives in First-Passage Percolation} 

\author[I. Mati\'c]{
Ivan Mati\'c}
\address{
Baruch College, City University of New York}\email{ivan.matic@baruch.cuny.edu}
\author[R. Radoi\v ci\'c]{
Rado\v s Radoi\v ci\'c} \address{
Baruch College, City University of New York}\email{rados.radoicic@baruch.cuny.edu} 
\author[D. Stefanica]{
Dan Stefanica}\address{
Baruch College, City University of New York}\email{dan.stefanica@baruch.cuny.edu}

\maketitle

\begin{abstract}
The variance of first-passage percolation admits a decomposition into Fourier levels indexed by the order of environment derivatives. These Fourier levels capture how local perturbations of different orders contribute to global fluctuations. In this paper, we investigate higher-order environment derivatives and their Fourier-level structure. We prove that derivatives of orders \(k\in\{2,3,4\}\) are almost surely bounded below by \(-\binom{k-2}{\lceil\frac{k-2}2\rceil}\) and above by \(\binom{k-2}{\lceil\frac{k-2}2\rceil}\). We conjecture that these are the correct bounds for all \(k\), and we construct explicit environments showing that these extreme values can indeed be attained.
\end{abstract}


\section{Introduction}

\subsection{Model}

We use the same model and definitions as in \cite{FirstPaper2025}. We refer the reader to that paper for a more comprehensive historical discussion and overview of the literature. For completeness, we briefly recall the model and state the results needed for our proofs.

Let \( a < b \) be two fixed positive real numbers, and let \( p \in (0,1) \) be fixed. We consider the finite subgraph of the integer lattice \( \mathbb{Z}^d \) (with \( d \geq 2 \)) induced by the box \( [-2n,2n]^d \). Two vertices \( (x_1,\dots,x_d) \) and \( (y_1,\dots,y_d) \) are connected by an edge if and only if they are nearest neighbors, that is,
\[
|x_1-y_1| + \cdots + |x_d-y_d| = 1.
\]

Each edge \( e \) is independently assigned a random passage time, equal to \( a \) with probability \( p \) and to \( b \) with probability \( 1-p \). Let \( W_n \) denote the set of all edges in this graph, and define the sample space \( \Omega_n = \{a,b\}^{W_n} \), whose elements \( \omega \in \Omega_n \) represent configurations of passage times.

Given a configuration \( \omega \) and a path \( \gamma \) (a sequence of adjacent edges), the passage time \( T(\gamma,\omega) \) is defined as the sum of the passage times of the edges in \( \gamma \). For vertices \( u,v \), we write \( f(u,v,\omega) \) for the minimal passage time among all paths connecting \( u \) to \( v \) in the environment \( \omega \).

When the destination vertex is fixed, we write \( f_n(\omega) \), or simply \( f_n \), for \( f(0,nv,\omega) \).

A path \( \gamma \) is called a \emph{geodesic} if the minimum \( f_n(\omega) \) is attained at \( \gamma \), that is, if \( f_n(\omega) = T(\gamma,\omega) \).

\subsection{The torus model}

For some of our results, we work with a simplified and more symmetric model introduced in \cite{benjamini2006}, in which first-passage percolation is considered on the discrete torus \( \mathbb{Z}_n^d \). We use the superscript \( \tau \) to indicate this setting, writing \( f_n^{\tau}(\omega) \), \( f_n^{\tau} \), or simply \( f^{\tau} \) when the dependence on \( n \) is clear.

The \( d \)-dimensional discrete torus is the graph with vertex set \( \mathbb{Z}_n^d \), in which two vertices \( u \) and \( v \) are connected by an edge if and only if they differ by \( \pm 1 \) modulo \( n \) in exactly one coordinate.

Let \( \Gamma \) denote the collection of all nearest-neighbor paths on \( \mathbb{Z}_n^d \) that wrap once around the torus in the \( x_1 \)-direction. More precisely, \( \gamma=(v_0,\dots,v_m)\in\Gamma \) if \( v_0 \) and \( v_m \) have identical coordinates except possibly in the first coordinate, with \( (v_0)_1=0 \) and \( (v_m)_1=n-1 \).

We define \( f_n^{\tau} \) to be the minimal passage time among all such paths,
\begin{eqnarray*}
f_n^{\tau}(\omega)
=
\min\left\{T(\gamma,\omega): \gamma \in \Gamma \right\}.
\end{eqnarray*}

\subsection{Environment derivatives}
If we denote by $W_n$ the set of all edges, then the sample space is $\Omega_n=\{a,b\}^{W_n}$. We will often omit the subscript $n$, when there is no danger of confusion. For each edge $j$ and each $\omega\in \Omega$, we define $\sigma_j^a(\omega)$ as the element of $\Omega$ whose $j$-th coordinate is changed from $\omega_j$ to $a$, regardless of what the original value $\omega_j$ was. The operation $\sigma_j^b$ is defined in analogous way. Formally, for $\delta\in\{a,b\}$, we define $\sigma_j^{\delta}:\Omega\to\Omega$ with 
\begin{eqnarray*}
\left[\sigma_j^{\delta}(\omega)\right]_k&=&\left\{\begin{array}{ll}\omega_k,& k\neq j,\\
\delta, &k=j.\end{array}\right.  
\end{eqnarray*}
If $\varphi:\Omega\to\mathbb R$ is any random variable, then the {\em first order environment derivative} $\partial_j\varphi$ is the random variable defined as 
\begin{eqnarray*}
\partial_j\varphi&=&\varphi\circ \sigma_j^b-\varphi\circ \sigma_j^a.  
\end{eqnarray*}

For two distinct vertices $k$ and $l$, we will give the name {\em second order environment derivative} to the quantity $\partial_{k}\partial_{l}\varphi$. In general, if $S$ is a non-empty subset of $W$, the operator $\partial_S\varphi$ is defined recursively as 
\begin{eqnarray}
\partial_S\varphi&=&\partial_{S\setminus\{j\}}\left(\partial_j\varphi\right), \label{eqn:definitionPartialS}
\end{eqnarray}
where $j$ is an arbitrary element of $S$. The definition \eqref{eqn:definitionPartialS} is independent on the choice of $j$, since a simple induction can be used to prove that for $S=\{s_1,\dots, s_m\}$, the following holds 
\begin{eqnarray}
\partial_S\varphi&=&\sum_{\theta_1\in\{a,b\}} \cdots\sum_{\theta_m\in\{a,b\}} (-1)^{\mathbf1_a(\theta_1)+ \cdots+\mathbf1_a(\theta_m)}\varphi\circ\sigma_{s_1}^{\theta_1} \circ\cdots\circ \sigma_{s_m}^{\theta_m}. \label{eqn|LeibnitzRule}
\end{eqnarray}
The function $\mathbf1_a:\{a,b\}\to \{0,1\}$ in \eqref{eqn|LeibnitzRule} assigns the value $1$ to $a$ and $0$ to $b$. 

\subsection{Variance decomposition and Fourier levels} 

A thorough understanding of environment derivatives would lead to a complete understanding of the variance. As shown in \cite{FirstPaper2025}, the variance admits the decomposition
\begin{eqnarray}
\mathrm{var}(f)
&=&
\sum_{M\subseteq W,\, M\neq\emptyset}
\left(p(1-p)\right)^{|M|}
\left(\mathbb{E}\left[\partial_M f\right]\right)^2,
\label{eqn|varianceFormula}
\end{eqnarray}
which highlights the central role of the $L^2$ norms of the derivatives $\partial_M f$ in determining fluctuation behavior. Despite their importance, general bounds for these quantities are not yet available.

We rewrite \eqref{eqn|varianceFormula} as
\begin{eqnarray}
\mathrm{var}(f)
=
\sum_{k=1}^{\infty} (p(1-p))^k\, \Sigma_k(f),
\label{eqn|varianceFormulaWeights}
\end{eqnarray}
where $\Sigma_k(f)$ is the level-$k$ Fourier sum of $f$, defined by
\begin{eqnarray*}
\Sigma_k(f)
=
\sum_{\substack{M\subseteq W\\ |M|=k}}
\left(\mathbb{E}\big[\partial_M f\big]\right)^2. 
\end{eqnarray*}

In this paper, we prove the following theorem, which provides bounds on the first and second Fourier levels in the torus model.

\begin{theorem}\label{thm|BoundFourierLevel2Torus}
If $a$ is an integer multiple of $b-a$, then there exists a constant $C$, depending only on $a$, $b$, $p$, and $d$, such that the level-$1$ and level-$2$ Fourier sums of $f^{\tau}$ satisfy
\begin{eqnarray}
\Sigma_1(f^{\tau}) &\leq& C n^{2-d},
\label{eqn|BoundFourierLevel1Torus}\\
\Sigma_2(f^{\tau}) &\leq& C n^{3-d}.
\label{eqn|BoundFourierLevel2Torus}
\end{eqnarray}
\end{theorem}

The variance decomposition \eqref{eqn|varianceFormula} is 
essentially Parseval's identity for the Walsh--Fourier expansion 
of $f$; the Fourier coefficients can be expressed in terms of 
environment derivatives via a discrete integration by parts, as 
noted in \cite{Tal2017} and made explicit in \cite{FirstPaper2025}. 
The companion paper \cite{FirstPaper2025} further establishes a 
generalization of Talagrand's and Tanguy's inequalities 
\cite{talagrand1994,Tanguy2018}.

\subsection{What the Fourier-level bounds suggest}

Theorem~\ref{thm|BoundFourierLevel2Torus} casts the variance 
conjecture in a new light, not by bringing it closer to resolution, 
but by making it more mysterious. In dimension $d=3$, the first two 
Fourier levels contribute at most $O(1)$ to the variance. If the 
widely conjectured bound $\mathrm{var}(f)=O(n^{2\chi})$ with 
$\chi>0$ holds in dimension $3$, then the divergence cannot come 
from the first two levels. It must come entirely from higher Fourier 
levels. This would be a striking phenomenon: the contributions of 
individual edges and pairs of edges to the variance are negligible, 
yet the cumulative effect of higher-order interactions produces 
polynomial growth. On the other hand, if higher Fourier levels are 
also well-controlled, as we conjecture, particularly in dimensions 
$d\geq 3$ where the $L^2$ norms $\|\partial_M f\|_2$ may decay 
exponentially in $k$, then the variance itself would be $O(1)$, 
implying $\chi=0$ in dimension $3$. Either conclusion would be 
remarkable, and both remain out of reach.

\subsection{Almost sure bounds on environment derivatives}
The sequence $(\mathcal U_1$, $\mathcal U_2$, $\dots)$ represents the optimal upper bounds for environment derivatives. The number $\mathcal U_k$ is defined as the best upper bound on the $k$-th order environment derivative, i.e. 
\begin{eqnarray}
\mathcal U_k&=&\frac1{b-a} \max\left\{\partial_Sf_n(\omega):n\in\mathbb N, S\subseteq W_n, |S|=k, \omega\in \Omega_n\right\}. \label{eqn|DefinitionUn}
\end{eqnarray}
We define an analogous sequence $(\mathcal L_1, \mathcal L_2, \dots)$ 
of optimal lower bounds. For each $k\in\mathbb N$, 
\begin{eqnarray}
\mathcal L_k&=&\frac1{b-a} \min\left\{\partial_Sf_n(\omega):n\in\mathbb N, S\subseteq W_n, |S|=k, \omega\in \Omega_n\right\}. \label{eqn|DefinitionLn}
\end{eqnarray}
 \begin{theorem}\label{thm|firstFew} 
In dimensions $d \geq 3$, the first four values of $(\mathcal{U}_k)$ and $(\mathcal{L}_k)$ are shown in the table below.
\begin{eqnarray}&&
\begin{array}{|c|r|r|r|r|}
\hline 
k&{\quad}1&{\quad}2&{\quad}3&{\quad}4\\
\hline
\mathcal U_k&1&1&1&2\\
\hline 
\mathcal L_k&0&-1&-1&-2\\
\hline
\end{array}
\label{eqn|firstFew}
\end{eqnarray}
\end{theorem}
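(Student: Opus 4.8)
The plan is to prove the four columns of the table separately, since the upper and lower bounds are of a quite different flavor. For the lower bounds on $\mathcal L_k$ (i.e.\ the bounds must be \emph{achieved}) and the upper bounds on $\mathcal U_k$ (the bounds must be \emph{respected} by every configuration, and achieved by one), one needs both an existence part and a universality part.

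\textbf{Existence part (the easy direction).} For each of the eight table entries I would exhibit an explicit small configuration — a box in $\mathbb Z^2$ suffices by monotonicity in $d$ — together with a set $S$ of $|S|=k$ edges, and simply compute $\partial_S f_n(\omega)$ from the Leibniz-type expansion \eqref{eqn|LeibnitzRule}. Concretely one picks $n$, draws the $2^k$ perturbed environments $\sigma_{s_1}^{\theta_1}\circ\cdots\circ\sigma_{s_k}^{\theta_k}(\omega)$, reads off the geodesic length in each, and checks that the signed sum equals $(b-a)\mathcal U_k$ or $(b-a)\mathcal L_k$. For $k=1$ the derivative $\partial_j f$ lies in $\{0, b-a\}$ (toggling one edge from $a$ to $b$ can only lengthen a shortest path, and by at most $b-a$ since any geodesic avoiding $j$ is still available), giving $\mathcal U_1=1$, $\mathcal L_1=0$ immediately. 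For $k=2,3,4$ the witnessing configurations are where the computer search in \cite{ProgramGithub2025} comes in: the program enumerates configurations on small boxes, evaluates $\partial_S f$, and records the extremal values; each extremal example it finds is then a finite, checkable certificate.

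\textbf{Universality part (the hard direction).} This is the real content, and it is where I expect the main obstacle. One must show $\partial_S f_n(\omega)\le (b-a)\mathcal U_k$ and $\partial_S f_n(\omega)\ge (b-a)\mathcal L_k$ for \emph{all} $n$, all $|S|=k$, and all $\omega$. The key reduction, drawing on the algebraic results of \cite{FirstPaper2025}, is that $\partial_S f_n(\omega)$ depends only on the combinatorial "pattern" of how geodesics, in the $2^k$ relevant environments, interact with the $k$ special edges of $S$ — so the quantity is a function of a finite amount of data even though $n$ and the box are unbounded. Making this precise, I would (i) invoke from \cite{FirstPaper2025} the statement that $\partial_S f$ can be written in terms of passage times of a bounded number of paths through the edges of $S$, reducing the problem to a finite optimization; (ii) set up that finite optimization as the search space of the algorithm; and (iii) let the program exhaustively verify that the signed alternating sum of path-lengths never exceeds $(b-a)\mathcal U_k$ or drops below $(b-a)\mathcal L_k$ over all admissible patterns. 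The correctness of this last step hinges on proving that the algorithm's finite search space genuinely captures every possible $(\omega, S)$ — i.e.\ a soundness lemma for the reduction — and then on trusting the exhaustive enumeration; that soundness lemma, together with controlling the combinatorial explosion so the search terminates, is the crux of the paper.

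\textbf{Assembly.} Given the existence examples and the universality bounds, each table entry is pinned down exactly: for $k\le 3$ the bounds $\mathcal U_k\le k-1+[k\ge 3]$ and the matching examples close the cases quickly (and $\mathcal L_2=-1$, $\mathcal L_3=-1$ similarly), while the genuinely new results are $\mathcal U_4=3$ and $\mathcal L_4=-2$, whose universality is precisely what the computer-assisted argument establishes. I would present the $k=1$ case by hand as a warm-up, cite the $k=2,3$ values from \cite{FirstPaper2025} (or re-derive them with the same algorithm as a consistency check), and devote the bulk of the proof to describing the algorithm, proving its soundness, and reporting that its output certifies the two fourth-order entries; the explicit extremal configurations are recorded in an appendix or in \cite{ProgramGithub2025} so the reader can independently verify $\partial_S f = \pm(b-a)\mathcal L_4, (b-a)\mathcal U_4$ by a direct finite computation.
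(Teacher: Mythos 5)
Your high-level architecture (exhibit extremal configurations, prove universal bounds, reduce the universal bounds to a finite computer-checkable problem) is broadly the right shape, but it misses how the paper actually handles this theorem and, more importantly, it leaves the one genuinely hard step unresolved. First, a structural point: the paper proves none of the existence statements and almost none of the bounds here; everything except $\mathcal L_4\geq -2$ is imported from \cite{FirstPaper2025}. Indeed the recursion \eqref{eqn|gBoundFibonacci} applied to the $k=3$ column already yields $\mathcal U_4\leq 3$ and only $\mathcal L_4\geq -3$, so the single new assertion requiring proof is $\mathcal L_4\geq -2$, and the entire paper is devoted to it. Your plan treats all eight entries symmetrically and proposes to re-derive the $k\leq 3$ column, which is harmless but not what is done; your interpolating formula $\mathcal U_k\leq k-1+[k\geq 3]$ does not even match the table ($k=1$ gives $0$, $k=3$ gives $3$).

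The genuine gap is in your universality step. You assert that $\partial_S f_n(\omega)$ "depends only on the combinatorial pattern" of geodesic--edge incidences and is therefore "a function of a finite amount of data." That is false as stated: the sixteen terms $f(\sigma^{\overrightarrow r}(\omega))$ differ by real-valued passage times of detour paths, and the derivative genuinely depends on continuous parameters. The paper's reduction is not an enumeration of patterns but a three-layer argument: (a) the notion of negatively helpful pairs together with the matching-preclusion theorem for the $4$-cube (Theorem \ref{thm|HallsTolerance}, from \cite{BrighamHararyViolinYellen2005}) gives Proposition \ref{thm|MTolerance}, so that \emph{two disjoint} negatively helpful pairs already force $\partial_Sf(\omega)\geq -2(b-a)$; (b) hand-proved propositions dispose of the cases where some geodesic on $\sigma^{(a,a,a,a)}(\omega)$ omits an edge of $S$, and of the case with no negatively helpful pairs; (c) the residual situation is split into $12$ cases indexed by which helpfulness indicator of a balanced vector equals $1$, and in each case the computer introduces real parameters $\theta\in(0,b-a)$ for the excess passage times of detour segments, derives linear equalities and inequalities for each of the $16$ terms, and verifies symbolically that the alternating sum is at least $-2(b-a)$. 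Your "soundness lemma," which you correctly flag as the crux, is precisely the content of (a) and (b) plus the structural identities of Proposition \ref{thm|forComputer}; without supplying it, the proposal does not close the argument, and the finite-data claim on which you would build it is not true.
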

\begin{theorem}\label{thm|generalUpperBounds}
The sequences $(\mathcal U_k)$ and $(\mathcal L_k)$ satisfy 
\begin{eqnarray} 
&& \mathcal U_{k+1}\leq \mathcal U_k -\mathcal L_k\quad\mbox{and}\quad \mathcal L_{k+1}\geq \mathcal L_k-\mathcal U_k, \label{eqn|gBoundFibonacci}\end{eqnarray} for all $k\geq 1$. Moreover, for all $k\geq 2$,
\begin{eqnarray}
&& \mathcal U_k \leq 2^{k-2}\quad\mbox{and}\quad \left|\mathcal L_k\right| \leq 2^{k-2}. \label{eqn|uBoundsUL}\end{eqnarray}
Also, for all integers $k \geq 2$, the following holds:
\begin{eqnarray}&& \mathcal U_k \geq \binom{k-2}{\lceil\frac{k-2}2\rceil} \quad\mbox{and}\quad \left|\mathcal L_k\right| \geq \binom{k-2}{\lceil\frac{k-2}2\rceil}.\label{eqn|lBoundsUL}
\end{eqnarray}
\end{theorem}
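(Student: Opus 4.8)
\textbf{The recursion \eqref{eqn|gBoundFibonacci}.} I would read this directly off the recursive definition \eqref{eqn:definitionPartialS} and the Leibniz rule \eqref{eqn|LeibnitzRule}. Fix $n$, a set $S\subseteq W_n$ with $|S|=k+1$, and a configuration $\omega\in\Omega_n$; pick $j\in S$ and put $S'=S\setminus\{j\}$, so that $|S'|=k$. Since $j\notin S'$, the maps $\sigma_j^{a}$ and $\sigma_j^{b}$ commute with every $\sigma_i^{\theta}$ for $i\in S'$; expanding $\partial_{S'}$ by \eqref{eqn|LeibnitzRule} and using \eqref{eqn:definitionPartialJ} then gives $\partial_S f_n(\omega)=(\partial_{S'}f_n)\big(\sigma_j^{b}(\omega)\big)-(\partial_{S'}f_n)\big(\sigma_j^{a}(\omega)\big)$. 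As $\sigma_j^{b}(\omega)$ and $\sigma_j^{a}(\omega)$ belong to $\Omega_n$ and $|S'|=k$, the first term is at most $(b-a)\mathcal U_k$ and the second is at least $(b-a)\mathcal L_k$, so $\partial_S f_n(\omega)\le (b-a)(\mathcal U_k-\mathcal L_k)$. Taking the supremum over all admissible $(n,S,\omega)$ in \eqref{eqn|DefinitionUn} yields $\mathcal U_{k+1}\le\mathcal U_k-\mathcal L_k$, and the symmetric estimate gives $\mathcal L_{k+1}\ge\mathcal L_k-\mathcal U_k$.

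\textbf{The bound \eqref{eqn|uBoundsUL}.} This follows by induction on $k$ from \eqref{eqn|gBoundFibonacci}. For $k=2$, Theorem~\ref{thm|firstFew} gives $\mathcal U_2=1$ and $\mathcal L_2=-1$, so $\mathcal U_2\le 2^{0}$ and $|\mathcal L_2|\le 2^{0}$. If $\mathcal U_k\le 2^{k-2}$ and $|\mathcal L_k|\le 2^{k-2}$ for some $k\ge 2$, then $\mathcal U_{k+1}\le\mathcal U_k-\mathcal L_k\le 2^{k-1}$ and $-2^{k-1}\le\mathcal L_k-\mathcal U_k\le\mathcal L_{k+1}\le\mathcal U_{k+1}\le 2^{k-1}$, whence $|\mathcal L_{k+1}|\le 2^{k-1}=2^{(k+1)-2}$, closing the induction.

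\textbf{The lower bound \eqref{eqn|lBoundsUL}.} This is the substantive part, and I would prove it by a \emph{multiplicative amplification} of optimal configurations. The key step is a composition lemma: given an environment on $k$ random edges that (nearly) realizes $\mathcal U_k$ or $\mathcal L_k$ on some box, one grafts it, via a fixed four-edge coupling, onto a copy of an environment realizing the order-$4$ optimum $\mathcal U_4=3$ of Theorem~\ref{thm|firstFew}, producing an environment on $k+4$ random edges whose mixed derivative is at least $3(b-a)$ times as large in absolute value; writing $m_k=\max(\mathcal U_k,|\mathcal L_k|)$, this reads $\mathcal U_{k+4}\ge 3m_k$ and $|\mathcal L_{k+4}|\ge 3m_k$. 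Since Theorem~\ref{thm|firstFew} gives $m_4=3=(\sqrt[4]3)^{4}$, iterating shows that both $\mathcal U_k\ge(\sqrt[4]3)^{k}$ and $|\mathcal L_k|\ge(\sqrt[4]3)^{k}$ for every $k\equiv 0\pmod 4$ with $k\ge 8$. For each residue $r\in\{1,2,3\}$ one then verifies — with the program of \cite{ProgramGithub2025}, or by a direct small construction — a single index $k_r^{\ast}\equiv r\pmod 4$ with $m_{k_r^{\ast}}\ge(\sqrt[4]3)^{k_r^{\ast}}$; the amplification propagates $\mathcal U_k\ge(\sqrt[4]3)^{k}$ and $|\mathcal L_k|\ge(\sqrt[4]3)^{k}$ to every $k\equiv r\pmod 4$ with $k>k_r^{\ast}$, and taking $k_0$ to be the largest of the resulting thresholds establishes \eqref{eqn|lBoundsUL}.

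\textbf{The main obstacle.} The entire difficulty lies in the amplification lemma. Because $f_n$ is a \emph{minimum} over paths, gluing two sub-environments in series makes $f_n$ the \emph{sum} of their passage times, so the mixed derivative across the two edge-sets vanishes identically; the gadget must therefore force a genuine branching so that the two families of edges interact through a $\min$, and one must then track the $2^{k+4}$ signed terms of the Leibniz expansion \eqref{eqn|LeibnitzRule} and show that the cancellations leave precisely the factor $3$. This is a finite but sizable case analysis, and it is exactly here that the algebraic reductions of \cite{FirstPaper2025} — which replace enumeration over environments by bookkeeping on geodesic structure — and the computer verification enter; securing the sharp base $\sqrt[4]3$, rather than $\sqrt[4]3-\varepsilon$, is what forces the argument to use the exact value $\mathcal U_4=3$ together with an amplification that never loses more than a factor of $3$ over four edges.
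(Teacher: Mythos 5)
The paper gives no proof of this theorem: it states explicitly that, apart from $\mathcal L_4\geq -2$ (which belongs to Theorem~\ref{thm|firstFew}), all assertions of Theorem~\ref{thm|generalUpperBounds} are proved in the companion paper \cite{FirstPaper2025}. So there is no in-paper argument to compare yours against, and I can only assess your proposal on its own terms. Your treatment of \eqref{eqn|gBoundFibonacci} is correct and complete: commutativity of the $\sigma_i^\theta$ gives $\partial_Sf_n(\omega)=(\partial_{S'}f_n)(\sigma_j^b(\omega))-(\partial_{S'}f_n)(\sigma_j^a(\omega))$, and bounding the two terms by $(b-a)\mathcal U_k$ and $(b-a)\mathcal L_k$ respectively yields both inequalities. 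Your induction for \eqref{eqn|uBoundsUL} is also correct, including the small but necessary observation that $\mathcal L_{k+1}\leq\mathcal U_{k+1}$ (min $\leq$ max over the same set) to control $\mathcal L_{k+1}$ from above.

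The proof of \eqref{eqn|lBoundsUL}, however, is not a proof but a plan whose entire substance is deferred. The ``composition lemma'' asserting $\mathcal U_{k+4}\geq 3m_k$ and $|\mathcal L_{k+4}|\geq 3m_k$ is precisely the content of the claim: you correctly identify that a series concatenation of gadgets kills the mixed derivative (since $f$ becomes a sum and cross-derivatives vanish), so a genuinely interacting gadget is needed, but you neither exhibit such a gadget nor verify that the $2^{k+4}$ signed terms of \eqref{eqn|LeibnitzRule} leave a factor of exactly $3$ — and note that your lemma is stronger than mere growth of $m_k$, since it requires the amplified derivative to be realizable with \emph{either} sign. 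The base cases $m_{k_r^{\ast}}\geq(\sqrt[4]3)^{k_r^{\ast}}$ for the residues $r\in\{1,2,3\}$ are likewise only asserted (e.g.\ for $r=1$ one would need something like $m_5\geq 4$ against the ceiling $\mathcal U_5\leq\mathcal U_4-\mathcal L_4=5$, which nothing in this paper establishes). As it stands, \eqref{eqn|lBoundsUL} remains unproved in your write-up; it would have to be either carried out in full or cited from \cite{FirstPaper2025} as the authors do.
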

Theorem \ref{thm|generalUpperBounds} implies that $(\mathcal U_k)$ and $(\mathcal L_k)$ grow exponentially in $k$. Due to Stirling's formula, these sequences are between $\frac{2^{k-2}}{k-2}$ and $2^{k-2}$.

We conjecture that \eqref{eqn|lBoundsUL} are equalities. However, we have formal proofs only for $k\leq 4$ and we believe 
that it is possible to obtain at least a computer-assisted proof for $k=5$. We don't have the proof yet.

\subsection{The {\ConjectureName}}
\begin{conjecture}[\ConjectureName]
For all integers $k\geq2$, the following equalities hold 
\begin{eqnarray}&& \mathcal U_k=\binom{k-2}{\lceil\frac{k-2}2\rceil}\quad\mbox{and}\quad 
\mathcal L_k=-\binom{k-2}{\lceil\frac{k-2}2\rceil}. \label{eqn|ULEquality}
\end{eqnarray}
\end{conjecture}
The inequalities \eqref{eqn|lBoundsUL} provide a lower bound of $\mathcal U_k$ and an upper bound for $\mathcal L_k$. The conjectured equalities hold for $k\in\{2,3,4\}$. Moreover, the following easy combinatorial identity provides a really miraculous jump from odd $k$ to even $k$. 

Namely, if $k\geq 3$ is an odd integer, then 
\begin{eqnarray}
\binom{k-2}{\lceil\frac{k-2}2\rceil}+\binom{k-2}{\lceil\frac{k-2}2\rceil}&=& \binom{k-1}{\lceil\frac{k-1}2\rceil}.\label{eqn|MiraculousIdentity}
\end{eqnarray} 

The equality \eqref{eqn|MiraculousIdentity} makes the conjecture even more believable: if \eqref{eqn|ULEquality} holds for an odd $k$, then it would immediately hold for $k+1$ due to the Fibonacci-type bounds 
\eqref{eqn|gBoundFibonacci} and the established bounds \eqref{eqn|lBoundsUL}.

The bounds \eqref{eqn|lBoundsUL} were obtained by constructing specific configurations on which the derivatives attain these extreme values. Of course, there is no guarantee that some other configurations won't violate the {\ConjectureName}. In our research, we considered several possible families of configurations and wrote computer programs that would go over the members of the families and search for extreme derivatives. The best that we were able to find are the families of configurations that we call {\em disjoint lanes}, which are surprisingly simple. We have considered variations in which the numbers of lanes was higher than 2, but these turned out to be sub-optimal. 

The extremal configurations require $n$ to be large, making exhaustive computational verification infeasible.

\subsection{Related work and context}

First-passage percolation was introduced by Hammersley and Welsh
\cite{HammersleyWelsh1965}; we refer to \cite{AuffingerDamronHanson2017}
for a comprehensive survey. The best known upper bound
$\mathrm{var}(f_n)\leq C\,n/\log n$ is due to Benjamini, Kalai, and
Schramm \cite{benjamini2006}, extended to general edge-weight
distributions in \cite{benaim2008} and \cite{DamronHansonSosoe2015}.
The widely conjectured sharp bound is $\mathrm{var}(f_n)=O(n^{2\chi})$
with $\chi=1/3$ in two dimensions \cite{AuffingerDamronHanson2017};
the rigorous upper bound $\chi\leq 1/2$ is due to Kesten
\cite{Kesten1993}, and a proof that $\chi<1/2$ remains out of reach.
Related variance inequalities in the Boolean function setting appear
in \cite{Przybylowski2024}, the superconcentration framework is
developed in \cite{chatterjee2014}, and higher-order concentration
inequalities for functions of independent random variables are studied
in \cite{BobkovGotzeSambale2019}; the finite-difference calculus
underlying environment derivatives is treated in \cite{ODonnell2014}.

The proof of Theorem~\ref{thm|BoundFourierLevel2Torus} requires
understanding when second-order derivatives are nonzero, which in turn
requires analyzing configurations of geodesics. In the continuously
distributed setting, geodesics are unique and influential edges coincide
with essential ones; polynomial bounds $\mathbb{P}(A_j)\leq Cn^{-\xi}$
were established in \cite{DamronHanson2017}, strengthened by removing
differentiability assumptions in \cite{AhlbergHoffman2019}, and
culminated in the coalescence results of \cite{DembinElboimPeled2024}.
In the two-valued setting studied here, multiple geodesics can coexist,
introducing a richer structure of essential, semi-essential, influential,
and very influential edges developed in \cite{FirstPaper2025}; the
present paper builds directly on this structure in
Section~\ref{sec|HigherOrderInfluential}. Geodesic coalescence in FPP
has been studied in \cite{Hoffman2008}, \cite{Alexander2023},
\cite{Seppalainen2020}, and \cite{KrishnanRassoulAghaSeppalainen2023}.
The fluctuation and transversal exponents satisfy the KPZ scaling
relation $\chi=2\xi-1$, proved in \cite{NewmanPiza1995,chatterjee2013}
and generalized in \cite{AuffingerDamron2014}; the asymptotic shape of
FPP balls is studied in \cite{CoxDurrett1981} and
\cite{ChatterjeeDey2016}.

First-passage percolation belongs to the Kardar--Parisi--Zhang
universality class \cite{AlbertsKhaninQuastel2014,CorwinGhosalHammond2021}
and can be viewed as the zero-temperature limit of directed polymers
\cite{Zygouras2024}. The value $\chi=1/3$ in two dimensions was
established rigorously in the TASEP model by Johansson
\cite{Johansson2000}, with Tracy--Widom fluctuations
\cite{TracyWidom1994}. The models we study here are discrete, but first-passage percolation has been successfully generalized to Euclidean spatial models \cite{HowardNewman1997}. Large deviation estimates and scaling relations for spatial and lattice FPP models appear in \cite{BasuGangulySly2021} and \cite{BasuSidoraviciusSly2023}. In the continuous PDE setting of
random Hamilton--Jacobi equations, the $n/\log n$ variance bound was
obtained in \cite{MaticNolen2012}, homogenization results appear in
\cite{RezakhanlouTarver2000,ArmstrongCardaliaguetSouganidis2014,
DaviniKosyginaYilmaz2023}, and differentiability of the limit shape
is studied in \cite{BakhtinDow2024}.

The almost-sure bounds proved in this paper and the \ConjectureName{}
connect to several threads in combinatorics. Glasby and Paseman
\cite{GlasbyPaseman2022,GlasbyPaseman2024} studied extrema of weighted
binomial sums arising in coding theory; Byun and Poznanovi\'{c}
\cite{ByunPoznanovic2024} extended this to a parametric family. The
alternating binomial sums appearing in the proof of
Theorem~\ref{thm|dLanes} connect to lattice path enumeration via the
Lindstr\"{o}m--Gessel--Viennot method
\cite{Fulmek2012,Krattenthaler2010,Lee2022}, to explicit formulae for
generalized binomial coefficients \cite{Petrov2016}, and to
alternating-sum identities \cite{GuoJouhetZeng2007,ElBachraoui2021}.
   


\section{Essential and influential edges}
Except for Proposition \ref{thm|Straightforward} below, the results in this section were proved in \cite{FirstPaper2025}. Proposition \ref{thm|Straightforward} is trivial, 
but so important that it must be listed. 
\begin{proposition}\label{thm|Straightforward} For every $i\neq j$, every $\alpha,\beta\in\{a,b\}$, and every random variable $\varphi$, 
\begin{eqnarray} 
\sigma_i^{\alpha}\circ \sigma_i^{\beta}&=&\sigma_i^{\alpha};   \label{eqn|propertyA}\\
\sigma_i^{\alpha}\circ \sigma_j^{\beta}&=&\sigma_j^{\beta}\circ\sigma_i^{\alpha};  \label{eqn|propertyB} \\
(\partial_i \varphi)\circ \sigma_i^{\alpha} &=& \partial_i \varphi;  \label{eqn|propertyC} \\
\partial_i\partial_i \varphi&=&0;  \label{eqn|propertyD} \\
\partial_i\partial_j \varphi&=&\partial_j\partial_i\varphi;  \label{eqn|propertyE} \\
\varphi\cdot 1_{\omega_i=\alpha}&=&\varphi\circ \sigma_i^{\alpha}\cdot 1_{\omega_i=\alpha}.  \label{eqn|propertyF}
\end{eqnarray}
\end{proposition}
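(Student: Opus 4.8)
The plan is to verify each of the six identities directly from the definitions \eqref{eqn:definitionSigma} and \eqref{eqn:definitionPartialJ}, treating them in the order listed so that the later parts can reuse the earlier ones. None of this requires more than unwinding definitions; the statement is recorded precisely because it is used constantly.

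First I would check \eqref{eqn|propertyA} and \eqref{eqn|propertyB} coordinate by coordinate via \eqref{eqn:definitionSigma}: for \eqref{eqn|propertyA}, the $i$-th coordinate of $\sigma_i^{\alpha}\circ\sigma_i^{\beta}(\omega)$ is $\alpha$ no matter what $\beta$ is, and every other coordinate is left alone, so the composition equals $\sigma_i^{\alpha}$; for \eqref{eqn|propertyB}, since $i\neq j$ the two operators overwrite disjoint coordinates, so $\sigma_i^{\alpha}\circ\sigma_j^{\beta}$ and $\sigma_j^{\beta}\circ\sigma_i^{\alpha}$ both send $\omega$ to the configuration with $i$-th coordinate $\alpha$ and $j$-th coordinate $\beta$. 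Next, \eqref{eqn|propertyC} follows by expanding $(\partial_i\varphi)\circ\sigma_i^{\alpha}=\varphi\circ\sigma_i^{b}\circ\sigma_i^{\alpha}-\varphi\circ\sigma_i^{a}\circ\sigma_i^{\alpha}$ and simplifying each composition with \eqref{eqn|propertyA}, which gives back $\varphi\circ\sigma_i^{b}-\varphi\circ\sigma_i^{a}=\partial_i\varphi$. Then \eqref{eqn|propertyD} is immediate from \eqref{eqn|propertyC}: setting $\psi=\partial_i\varphi$, we get $\partial_i\partial_i\varphi=\psi\circ\sigma_i^{b}-\psi\circ\sigma_i^{a}=\psi-\psi=0$. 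For \eqref{eqn|propertyE} I would expand $\partial_i\partial_j\varphi$ as the signed sum of the four terms $\varphi\circ\sigma_j^{\theta}\circ\sigma_i^{\eta}$ with $\theta,\eta\in\{a,b\}$, apply \eqref{eqn|propertyB} to rewrite each as $\varphi\circ\sigma_i^{\eta}\circ\sigma_j^{\theta}$, and note that what results is exactly the expansion of $\partial_j\partial_i\varphi$ (equivalently, invoke \eqref{eqn|LeibnitzRule}, which is already symmetric in the indices). Finally, \eqref{eqn|propertyF} holds because on the event $\{\omega_i=\alpha\}$ we have $\sigma_i^{\alpha}(\omega)=\omega$, so $\varphi\circ\sigma_i^{\alpha}$ and $\varphi$ agree pointwise there while both products vanish off the event.

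There is no genuine obstacle; the only place where a small amount of care is needed is bookkeeping the order of composition in \eqref{eqn|propertyC} and \eqref{eqn|propertyE}, since the $\sigma$-operators act innermost-first, so $\partial_i$ applied to $\partial_j\varphi$ inserts $\sigma_i^{\eta}$ to the \emph{right} of the $\sigma_j^{\theta}$ already present.
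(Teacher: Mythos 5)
Your verification is correct and complete; the paper itself states this proposition without proof, explicitly calling it trivial, and your coordinate-by-coordinate unwinding of the definitions (with \eqref{eqn|propertyA} and \eqref{eqn|propertyB} feeding into \eqref{eqn|propertyC}--\eqref{eqn|propertyE}) is exactly the intended argument. Your closing remark about the $\sigma$-operators being inserted innermost-first is the one point where a sign of care is genuinely needed, and you handle it correctly.
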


For a fixed edge $j\in W_n$, define the events $A_j$ and $\hat A_j$ with 
\begin{eqnarray}
A_j&=&\left\{\omega\in\Omega_n:\partial_jf(\omega)\neq 0\right\}; \label{eqn|ImportantCellsIndicator}\\
\hat A_j&=&\left\{\omega\in\Omega_n:\partial_j f(\omega)=b-a\right\}. \label{eqn|HatImportantCellsIndicator}
\end{eqnarray}
The edge $j$ is called {\em influential} if the event $A_j$ occurred, and {\em very influential} if $\hat A_j$ occurred.
The following identities are set-theoretic consequences of 
\eqref{eqn|ImportantCellsIndicator}, 
\eqref{eqn|HatImportantCellsIndicator}, and \eqref{eqn|propertyC}; 
a formal derivation appears in \cite{FirstPaper2025}, and a 
generalization is proved in \eqref{eqn|invarianceLevelSetsGen} below. 
For every edge $j$ and every $\xi\in\{a,b\}$,
\begin{eqnarray}
\left(\sigma_j^{\xi}\right)^{-1}(A_j)=A_j 
&\mbox{and}& 
\left(\sigma_j^{\xi}\right)^{-1}(\hat A_j)=\hat A_j. 
\label{eqn|invarianceLevelSets}
\end{eqnarray}

The edge $j$ is called {\em essential} if each geodesic passes through $j$, and {\em semi-essential} if at least one geodesic passes through $j$. We will denote by $E_j$ the event that the edge $j$ is essential and by $\hat E_j$ the event that the edge $j$ is semi-essential. 

\begin{proposition}\label{thm|AiDefinition} The events $E_j$, $A_j$, $\hat E_j$, and $\hat A_j$ satisfy
\begin{eqnarray}
 A_j&=&(\sigma_j^a)^{-1}\left(E_j\right); \label{eqn|AiDefinition}\\
 \hat A_j&=&(\sigma_j^b)^{-1}(\hat E_j). \label{eqn|HatAiDefinition}
\end{eqnarray}
\end{proposition}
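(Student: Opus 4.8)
The plan is to reduce both identities to one elementary observation about how a single edge's weight enters path passage times, and then read off the two cases by comparing geodesics in the two ``pinned'' configurations $\sigma_j^a(\omega)$ and $\sigma_j^b(\omega)$. First I would record the basic identity: for any path $\gamma$ and any $\omega$,
$$T\!\left(\gamma, \sigma_j^b(\omega)\right) - T\!\left(\gamma, \sigma_j^a(\omega)\right) = (b-a)\cdot\mathbf{1}\!\left[j\in\gamma\right],$$
since these two configurations differ only on edge $j$, where the weights are $b$ and $a$. Minimizing over $\gamma$ and using $a<b$ then yields the sandwich
$$0 \;\le\; f\!\left(\sigma_j^b(\omega)\right) - f\!\left(\sigma_j^a(\omega)\right) \;=\; \partial_j f(\omega) \;\le\; b-a,$$
the left inequality because a geodesic of $\sigma_j^a(\omega)$ costs no more in $\sigma_j^b(\omega)$, the right one because a geodesic of $\sigma_j^b(\omega)$ costs at most $b-a$ less in $\sigma_j^a(\omega)$. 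In particular $\partial_j f(\omega)\neq 0$ is equivalent to $\partial_j f(\omega)>0$, while $\partial_j f(\omega)=b-a$ is the opposite extreme.

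For \eqref{eqn|AiDefinition} I would show that $f(\sigma_j^b(\omega)) > f(\sigma_j^a(\omega))$ holds precisely when every geodesic of $\sigma_j^a(\omega)$ uses $j$, which is the definition of $\sigma_j^a(\omega)\in E_j$. If some geodesic of $\sigma_j^a(\omega)$ avoids $j$, its cost is unchanged in $\sigma_j^b(\omega)$, forcing the two passage times to coincide. Conversely, if they coincide, then any geodesic of $\sigma_j^b(\omega)$ is also a geodesic of $\sigma_j^a(\omega)$ (its cost can only drop, and cannot drop below the minimum) and cannot contain $j$, since otherwise the above identity would push its cost in $\sigma_j^b(\omega)$ strictly above that minimum; hence $\sigma_j^a(\omega)\notin E_j$. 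Taking contrapositives gives $A_j=(\sigma_j^a)^{-1}(E_j)$.

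For \eqref{eqn|HatAiDefinition} I would argue symmetrically on the $\sigma_j^b$ side. If $\sigma_j^b(\omega)$ has a geodesic through $j$, i.e.\ $\sigma_j^b(\omega)\in\hat E_j$, then lowering edge $j$ to weight $a$ decreases that path's cost by exactly $b-a$, so $f(\sigma_j^a(\omega))\le f(\sigma_j^b(\omega))-(b-a)$, which together with the sandwich forces $\partial_j f(\omega)=b-a$. Conversely, if no geodesic of $\sigma_j^b(\omega)$ uses $j$ but $\partial_j f(\omega)=b-a$, then there is a path $\gamma'$ with $T(\gamma',\sigma_j^a(\omega))=f(\sigma_j^b(\omega))-(b-a)$; evaluating $T(\gamma',\sigma_j^b(\omega))$ produces either a path strictly cheaper than $f(\sigma_j^b(\omega))$ (if $j\notin\gamma'$) or a geodesic of $\sigma_j^b(\omega)$ through $j$ (if $j\in\gamma'$), a contradiction either way. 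Hence $\hat A_j=(\sigma_j^b)^{-1}(\hat E_j)$.

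The computations are all elementary, so I do not expect a serious obstacle; the only care needed is bookkeeping about which of the two pinned configurations the relevant geodesics live in, and keeping in mind that $\partial_j f(\omega)$ can in general take values strictly between $0$ and $b-a$. Consequently the two identities genuinely describe the two extreme cases of the sandwich and must be established separately rather than as logical complements of one another.
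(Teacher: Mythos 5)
The paper does not actually prove this proposition --- it is one of the results quoted from \cite{FirstPaper2025} (``the results in this section were proved in \cite{FirstPaper2025}'') --- so there is no in-text argument to compare against and your proof must stand on its own. It does: for \eqref{eqn|AiDefinition} you correctly show that $\partial_jf(\omega)=0$ exactly when some geodesic of $\sigma_j^a(\omega)$ avoids $j$, using the sandwich $0\le\partial_jf(\omega)\le b-a$ together with the observation that a geodesic of $\sigma_j^b(\omega)$ attaining the common value cannot contain $j$; and for \eqref{eqn|HatAiDefinition} the two-case evaluation of a geodesic of $\sigma_j^a(\omega)$ inside the configuration $\sigma_j^b(\omega)$ is exactly the right dichotomy. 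One slip worth fixing: in your justification of the sandwich the roles of the two pinned configurations are interchanged. The lower bound $f(\sigma_j^a(\omega))\le f(\sigma_j^b(\omega))$ follows because a geodesic of $\sigma_j^b(\omega)$ costs no more in $\sigma_j^a(\omega)$ --- a geodesic of $\sigma_j^a(\omega)$ may well cost \emph{strictly more} in $\sigma_j^b(\omega)$ if it uses $j$ --- and, symmetrically, the upper bound $\partial_jf(\omega)\le b-a$ comes from evaluating a geodesic of $\sigma_j^a(\omega)$ in $\sigma_j^b(\omega)$, where its cost increases by at most $b-a$. Your displayed identity $T(\gamma,\sigma_j^b(\omega))-T(\gamma,\sigma_j^a(\omega))=(b-a)\cdot\mathbf{1}[j\in\gamma]$ already contains everything needed, so the correction is one line and does not affect the rest of the argument.
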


\begin{proposition}\label{thm|forComputer} The following two propositions hold for every $\omega\in\Omega$.
\begin{enumerate}
\item[(a)] If $\sigma_j^a(\omega)\in E_j^C$, then $f(\sigma_j^b(\omega))=f(\sigma_j^a(\omega))$;
\item[(b)] If $\sigma_j^b(\omega)\in \hat E_j$, then $f(\sigma_j^b(\omega))=f(\sigma_j^a(\omega))+(b-a)$.
\end{enumerate}
\end{proposition}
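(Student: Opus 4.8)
The plan is to argue directly from the definition of the passage time, exploiting that switching a single edge from $a$ to $b$ perturbs the cost of any path in a fully controlled way. Fix $\omega\in\Omega$ and an edge $j$, and abbreviate $\omega^a=\sigma_j^a(\omega)$ and $\omega^b=\sigma_j^b(\omega)$; by \eqref{eqn:definitionSigma} these two configurations agree on every edge other than $j$, where $\omega^a_j=a$ and $\omega^b_j=b$. The first step is to record two elementary comparisons. On the one hand, for every path $\gamma$ we have $T(\gamma,\omega^a)\le T(\gamma,\omega^b)$, so minimizing over $\gamma$ gives $f(\omega^a)\le f(\omega^b)$. On the other hand, because all passage times are strictly positive, every geodesic is self-avoiding and therefore traverses $j$ at most once; applying this to a geodesic $\delta$ of $\omega^a$ gives $f(\omega^b)\le T(\delta,\omega^b)\le T(\delta,\omega^a)+(b-a)=f(\omega^a)+(b-a)$. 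Hence $f(\omega^a)\le f(\omega^b)\le f(\omega^a)+(b-a)$, and each part of the statement now reduces to identifying which of these two extremes is attained.

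For part (a), the hypothesis $\omega^a\in E_j^C$ means that $j$ is not essential in $\omega^a$, so some geodesic $\gamma^\star$ of $\omega^a$ avoids $j$. Its cost is unchanged under the switch, so $f(\omega^b)\le T(\gamma^\star,\omega^b)=T(\gamma^\star,\omega^a)=f(\omega^a)$; together with $f(\omega^b)\ge f(\omega^a)$ this gives $f(\omega^b)=f(\omega^a)$, which is the claim. For part (b), the hypothesis $\omega^b\in\hat E_j$ means that some geodesic $\gamma^\star$ of $\omega^b$ passes through $j$; being self-avoiding it uses $j$ exactly once, so $T(\gamma^\star,\omega^a)=T(\gamma^\star,\omega^b)-(b-a)=f(\omega^b)-(b-a)$, whence $f(\omega^a)\le f(\omega^b)-(b-a)$, i.e.\ $f(\omega^b)\ge f(\omega^a)+(b-a)$; together with the Lipschitz bound above this forces $f(\omega^b)=f(\omega^a)+(b-a)$.

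I would also note a one-line alternative that uses the machinery already available: since $\partial_jf(\omega)=f(\omega^b)-f(\omega^a)$ by \eqref{eqn:definitionPartialJ}, identity \eqref{eqn|AiDefinition} gives $\omega^a\in E_j^C\iff\omega\notin A_j\iff\partial_jf(\omega)=0$, which is exactly (a), while \eqref{eqn|HatAiDefinition} gives $\omega^b\in\hat E_j\iff\omega\in\hat A_j\iff\partial_jf(\omega)=b-a$, which is exactly (b). I would present the direct argument as the proof, since it is self-contained and also exhibits the geometry behind Proposition \ref{thm|AiDefinition}, and relegate this derivation to a remark.

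The statement carries no genuine difficulty; the single point that must not be glossed over is that a minimizing path may always be chosen self-avoiding, so that the altered edge $j$ shifts the cost of any geodesic by either $0$ or exactly $b-a$, never by a larger multiple of $b-a$. This observation is where the standing assumption $0<a<b$ is used, and it is the common hinge of both cases.
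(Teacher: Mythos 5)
Your proof is correct. Note that this paper does not actually prove Proposition~\ref{thm|forComputer}: the text states that all results of that section other than Proposition~\ref{thm|Straightforward} are imported from \cite{FirstPaper2025}, so there is no in-paper argument to compare against. Your direct argument is the natural self-contained one: the two a priori bounds $f(\sigma_j^a(\omega))\le f(\sigma_j^b(\omega))\le f(\sigma_j^a(\omega))+(b-a)$ (monotonicity, plus the fact that a geodesic, being self-avoiding since $a,b>0$, uses $j$ at most once) reduce each part to exhibiting one path that attains the relevant extreme, and your choices of witness path in (a) and (b) do exactly that. You are also right to flag the self-avoidance point as the one place where positivity of the weights enters, and your observation that the proposition is literally the conjunction of Proposition~\ref{thm|AiDefinition} with the definitions \eqref{eqn|ImportantCellsIndicator}--\eqref{eqn|HatImportantCellsIndicator} is accurate, though since Proposition~\ref{thm|AiDefinition} is itself only cited here, the direct argument is the right one to present as the proof.
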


The sets $\{\omega_j=a\}$ and $\{\omega_j=b\}$ are the ranges of the transformations $\sigma_j^a$ and $\sigma_j^b$, i.e. \begin{eqnarray}
\sigma_j^a\left(\Omega\right)=\{\omega_j=a\}\quad\mbox{ and }\quad 
\sigma_j^b\left(\Omega\right)=\{\omega_j=b\}.
\label{eqn|fixedPointsSigmaJAB}
\end{eqnarray}

\begin{proposition} For every $j\in W$, the events $E_j$, $\hat E_j$, $A_j$, and $\hat A_j$ satisfy 
\begin{eqnarray}
&&\sigma_j^a(A_j)=\sigma_j^a(A_j\cap\{\omega_j=b\}) 
=A_j\cap \left\{\omega_j=a\right\} 
=E_j\cap \left\{\omega_j=a\right\}; \label{eqn:AEinclusion}\\
&&\sigma_j^b(\hat A_j) = \sigma_j^b(\hat A_j\cap\{\omega_j=a\})  
=\hat A_j\cap \left\{\omega_j=b\right\} 
=\hat E_j\cap \left\{\omega_j=b\right\}. \label{eqn:HatAEinclusion}
\end{eqnarray}

\end{proposition}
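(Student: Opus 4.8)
The plan is to reduce every equality in the statement to the single observation that $A_j$ and $\hat A_j$ are insensitive to the $j$-th coordinate of $\omega$, and then to feed that into the algebraic identities of Proposition \ref{thm|Straightforward} and the descriptions in Proposition \ref{thm|AiDefinition}. \emph{Step 1 (invariance).} First I would record that $(\sigma_j^\alpha)^{-1}(A_j) = A_j$ and $(\sigma_j^\alpha)^{-1}(\hat A_j) = \hat A_j$ for both $\alpha \in \{a,b\}$. This is immediate from \eqref{eqn|propertyC}: since $(\partial_j f)\circ \sigma_j^\alpha = \partial_j f$, we have $\partial_j f(\sigma_j^\alpha(\omega)) = \partial_j f(\omega)$, so $\sigma_j^\alpha(\omega)$ lies in $A_j = \{\partial_j f \ne 0\}$ (resp. in $\hat A_j = \{\partial_j f = b-a\}$) exactly when $\omega$ does. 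In particular $\sigma_j^\alpha(A_j) \subseteq A_j$ and $\sigma_j^\alpha(\hat A_j) \subseteq \hat A_j$.

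\emph{Step 2 (image equalities).} Next I would prove the two ``image'' equalities on each line. For $\sigma_j^a(A_j) = A_j \cap \{\omega_j = a\}$ the inclusion ``$\subseteq$'' follows from $\sigma_j^a(A_j) \subseteq A_j$ together with $\sigma_j^a(\Omega) = \{\omega_j = a\}$ from \eqref{eqn|fixedPointsSigmaJAB}; for ``$\supseteq$'', any $\eta \in A_j \cap \{\omega_j = a\}$ is a fixed point of $\sigma_j^a$ by \eqref{eqn|propertyA}, so $\eta = \sigma_j^a(\eta) \in \sigma_j^a(A_j)$. For $\sigma_j^a(A_j \cap \{\omega_j = b\}) = A_j \cap \{\omega_j = a\}$ the inclusion ``$\subseteq$'' is argued the same way, and for ``$\supseteq$'', given $\eta \in A_j \cap \{\omega_j = a\}$ I would set $\omega := \sigma_j^b(\eta)$, which lies in $A_j$ by Step 1 and has $\omega_j = b$, while $\sigma_j^a(\omega) = \sigma_j^a \sigma_j^b(\eta) = \sigma_j^a(\eta) = \eta$ by \eqref{eqn|propertyA}. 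The corresponding statements for $\hat A_j$ (interchanging the roles of $a$ and $b$, and using $\sigma_j^b(\Omega) = \{\omega_j = b\}$) follow verbatim.

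\emph{Step 3 (passing to $E_j$, $\hat E_j$).} From \eqref{eqn|AiDefinition}, $A_j = (\sigma_j^a)^{-1}(E_j)$, so $\omega \in A_j$ precisely when $\sigma_j^a(\omega) \in E_j$; restricting to $\{\omega_j = a\}$, on which $\sigma_j^a$ acts as the identity, gives $A_j \cap \{\omega_j = a\} = E_j \cap \{\omega_j = a\}$, and symmetrically \eqref{eqn|HatAiDefinition} gives $\hat A_j \cap \{\omega_j = b\} = \hat E_j \cap \{\omega_j = b\}$. Chaining the equalities of Steps 2 and 3 produces \eqref{eqn:AEinclusion} and \eqref{eqn:HatAEinclusion}. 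I do not expect a genuine obstacle: once Step 1 is in place the rest is bookkeeping with Proposition \ref{thm|Straightforward}. The only point needing care is keeping the direction of each inclusion straight and, in particular, using the invariance of Step 1 for \emph{both} $\sigma_j^a$ and $\sigma_j^b$ --- it is precisely the invariance under $\sigma_j^b$ that makes the ``$A_j \cap \{\omega_j = b\}$'' form of the first identity work (and $\sigma_j^a$ for the ``$\hat A_j \cap \{\omega_j = a\}$'' form of the second).
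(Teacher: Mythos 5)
Your proof is correct: the invariance $(\sigma_j^\alpha)^{-1}(A_j)=A_j$ and $(\sigma_j^\alpha)^{-1}(\hat A_j)=\hat A_j$ from \eqref{eqn|propertyC}, combined with \eqref{eqn|propertyA}, \eqref{eqn|fixedPointsSigmaJAB}, and Proposition \ref{thm|AiDefinition}, yields all the stated equalities, and each inclusion is justified. The paper itself defers this proposition to \cite{FirstPaper2025} without reproducing a proof, but your argument is exactly the standard derivation from the tools the paper supplies.
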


\begin{proposition} For every $j\in W$, the events $E_j$, $\hat E_j$, $A_j$, and $\hat A_j$ satisfy
\begin{eqnarray}
&&E_j\subseteq \hat E_j,\quad \hat A_j\subseteq A_j,
\nonumber\\
&&E_j\subseteq A_j ,\quad \hat A_j\subseteq \hat E_j.
\label{eqn:EAinclusion}
\end{eqnarray}
\end{proposition}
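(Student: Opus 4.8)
The statement bundles four containments, and I would organize the argument by separating the two that are purely definitional from the two that carry the actual content.

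The inclusions $E_j\subseteq\hat E_j$ and $\hat A_j\subseteq A_j$ need no lemmas. The box $[-2n,2n]^d$ is finite and all passage times lie in $\{a,b\}\subseteq(0,\infty)$, so a geodesic always exists; hence if \emph{every} geodesic passes through $j$ then certainly \emph{some} geodesic does, which is $E_j\subseteq\hat E_j$. For the other one, $a<b$ gives $b-a\neq0$, so $\partial_jf(\omega)=b-a$ already forces $\partial_jf(\omega)\neq0$, i.e. $\hat A_j\subseteq A_j$.

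For $E_j\subseteq A_j$ the plan is to use Proposition \ref{thm|AiDefinition}, which via \eqref{eqn|AiDefinition} rewrites $A_j=(\sigma_j^a)^{-1}(E_j)$, so it suffices to show $\sigma_j^a(E_j)\subseteq E_j$. Fix $\omega\in E_j$. If $\omega_j=a$ then $\sigma_j^a(\omega)=\omega$ and there is nothing to prove (this case is also immediate from the identity $E_j\cap\{\omega_j=a\}=A_j\cap\{\omega_j=a\}$ in \eqref{eqn:AEinclusion}). If $\omega_j=b$, I would run the standard rerouting comparison: pick a geodesic $\gamma^\ast$ for $\omega$, which must use $j$; lowering the cost of $j$ from $b$ to $a$ makes $\gamma^\ast$ cheaper by exactly $b-a$, so $f(\sigma_j^a(\omega))\le f(\omega)-(b-a)<f(\omega)$, whereas any path avoiding $j$ keeps its cost $\ge f(\omega)$ and so cannot be optimal for $\sigma_j^a(\omega)$. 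Hence every geodesic for $\sigma_j^a(\omega)$ uses $j$, that is $\sigma_j^a(\omega)\in E_j$, and therefore $\omega\in A_j$.

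For $\hat A_j\subseteq\hat E_j$ I would argue analogously, now using \eqref{eqn|HatAiDefinition} to get that $\omega\in\hat A_j$ implies $\sigma_j^b(\omega)\in\hat E_j$. If $\omega_j=b$ then $\sigma_j^b(\omega)=\omega$ and we are done (this half is also recorded in \eqref{eqn:HatAEinclusion}). If $\omega_j=a$, so $\sigma_j^a(\omega)=\omega$, I would apply Proposition \ref{thm|forComputer}(b) to obtain $f(\sigma_j^b(\omega))=f(\omega)+(b-a)$, choose a geodesic $\gamma$ for $\sigma_j^b(\omega)$ through $j$, and observe that restoring the cost of $j$ to $a$ lowers $T(\gamma,\cdot)$ by exactly $b-a$, so $T(\gamma,\omega)=f(\omega)$; thus $\gamma$ is a geodesic for $\omega$ through $j$, i.e. $\omega\in\hat E_j$. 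The only step with genuine content — as opposed to unwinding the definitions of $\sigma_j^a,\sigma_j^b$ — is the rerouting comparison in the $\omega_j=b$ case of $E_j\subseteq A_j$ and its counterpart above; the subtlety there is merely that the relevant inequality must be \emph{strict}, which is exactly where $a<b$ is used, and once that is noted the rest is bookkeeping with the identities already established in Propositions \ref{thm|Straightforward}, \ref{thm|AiDefinition}, and \ref{thm|forComputer}.
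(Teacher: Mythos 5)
The paper does not actually prove this proposition: it is one of the results imported from \cite{FirstPaper2025}, so there is no in-paper argument to compare against. Your proof is correct as it stands — the two definitional inclusions are justified properly (existence of a geodesic on the finite box for $E_j\subseteq\hat E_j$, and $b-a\neq 0$ for $\hat A_j\subseteq A_j$), and the rerouting comparisons establishing $E_j\subseteq A_j$ and $\hat A_j\subseteq\hat E_j$ via Propositions \ref{thm|AiDefinition} and \ref{thm|forComputer} are sound, with the needed strictness correctly traced to $a<b$.
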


\begin{proposition} \label{thm|MonotonicityOfGeodesic} Assume that $\omega\in E_j$. A path $\gamma$ is a geodesic on $\omega$ if and only if it is a geodesic on $\sigma_j^a(\omega)$. 
\end{proposition}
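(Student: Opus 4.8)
The plan is to argue directly from the definition of essentiality, exploiting the fact that replacing $\omega$ by $\sigma_j^a(\omega)$ perturbs passage times in a completely transparent way. First I would dispose of the trivial case $\omega_j=a$: here $\sigma_j^a(\omega)=\omega$ by the definition \eqref{eqn:definitionSigma}, so there is nothing to prove. Thus I may assume $\omega_j=b$ and set $\Delta=b-a>0$. The key elementary observation is the decomposition
\[
T(\gamma,\sigma_j^a(\omega))=T(\gamma,\omega)-\Delta\cdot\mathbf 1\{j\in\gamma\}
\]
valid for every path $\gamma$, since $\sigma_j^a$ only lowers the passage time of edge $j$ from $b$ to $a$ and leaves all other edges untouched.

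Next I would record two consequences of the hypothesis $\omega\in E_j$. Since every geodesic on $\omega$ traverses $j$, we have $f(\omega)=\min\{T(\gamma,\omega): j\in\gamma\}$; and, crucially, every path $\gamma$ that avoids $j$ satisfies the \emph{strict} inequality $T(\gamma,\omega)>f(\omega)$ — for otherwise such a $\gamma$ would be a geodesic on $\omega$ not passing through $j$, contradicting essentiality. This is the step where the full strength of "essential" (as opposed to merely "semi-essential") is used.

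Then I would compute $f(\sigma_j^a(\omega))$ by splitting over whether a path uses $j$. For $\gamma\ni j$ the decomposition gives $T(\gamma,\sigma_j^a(\omega))=T(\gamma,\omega)-\Delta\ge f(\omega)-\Delta$, with equality exactly when $\gamma$ is a geodesic on $\omega$; such $\gamma$ exist because geodesics exist and all pass through $j$. For $\gamma\not\ni j$ we get $T(\gamma,\sigma_j^a(\omega))=T(\gamma,\omega)>f(\omega)>f(\omega)-\Delta$. Combining the two cases yields $f(\sigma_j^a(\omega))=f(\omega)-\Delta$, and moreover the paths attaining this minimum on $\sigma_j^a(\omega)$ are precisely the geodesics on $\omega$. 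This establishes both implications of the claimed equivalence at once.

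The argument is routine; the only delicate point is the strict inequality for $j$-avoiding paths noted above, which is exactly where the proposition would fail for semi-essential edges. As an alternative, one could instead derive the statement from Proposition \ref{thm|forComputer} together with the identities \eqref{eqn:AEinclusion}, but the direct route sketched here is shorter and self-contained.
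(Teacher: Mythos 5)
Your argument is correct and complete. Note that the paper itself does not prove Proposition \ref{thm|MonotonicityOfGeodesic}: the opening of that section states that all results there except Proposition \ref{thm|Straightforward} are imported from \cite{FirstPaper2025}, so there is no in-paper proof to compare against. Your route is the natural one: the identity $T(\gamma,\sigma_j^a(\omega))=T(\gamma,\omega)-(b-a)\mathbf 1\{j\in\gamma\}$ (valid for the edge-simple paths to which the minimization may be restricted, since all weights are positive), combined with $\omega\in E_j$, gives $f(\sigma_j^a(\omega))=f(\omega)-(b-a)$ with the minimizers on the two environments coinciding. One small refinement: in the case $\gamma\not\ni j$ you only need the non-strict bound $T(\gamma,\omega)\ge f(\omega)>f(\omega)-(b-a)$, so the strict inequality you flag as the delicate point is not actually consumed there; the full strength of essentiality (as opposed to semi-essentiality) is used in the forward implication, namely in knowing that \emph{every} geodesic on $\omega$ contains $j$ and hence receives the $-(b-a)$ discount. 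These are of course equivalent formulations of the same hypothesis, so this is a matter of bookkeeping, not a gap.
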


If $\overrightarrow \alpha\in \{a,b\}^m$ and $\overrightarrow v\in W^m$, define $\sigma_{\overrightarrow v}^{\overrightarrow\alpha}:\Omega\to \Omega$ as 
\begin{eqnarray}
\sigma_{\overrightarrow v}^{\overrightarrow\alpha}&=&\sigma_{v_1}^{\alpha_1}\circ\cdots\circ\sigma_{v_m}^{\alpha_m},
\label{eqn|MultiDimensionSigma}
\end{eqnarray}
where $\alpha_1$, $\dots$, $\alpha_m$ are the components of $\overrightarrow\alpha$ and $v_1$, $\dots$, $v_m$ are the components of $\overrightarrow v$.



\section{Disjoint lanes} \label{sec|Dls}
Our goal is to construct special, extreme environments in which the derivatives attain very large positive values and very small negative values. These environments will be used to establish the bounds in \eqref{eqn|lBoundsUL} from Theorem \ref{thm|generalUpperBounds}.  

The proofs involve somewhat lengthy algebraic calculations. Such calculations are omitted here and are presented in the Appendix.

In this section we will consider the first-passage percolation between two vertices, that we will call source and sink. With minimal modifications, the arguments apply to the first-passage percolation time on torus model from 
\cite{benjamini2006}.

Fix non-negative integers $m_1$, $m_2$, $\beta_1$, and $\beta_2$ for which $m_1+m_2\geq 2$; then fix a huge integer $N$ that satisfies
\[N>2^{100\cdot (1+a+b+\frac1{b-a}+m_1+m_2+\beta_1+\beta_2)},\] and, finally, choose an even bigger $n$ such that $n>100N\cdot \lceil\frac1{b-a}\rceil$. We will construct our example on $[-2n,2n]^d$. The source will be the vertex $O=(-2n,0,\dots, 0)$ and the sink will be the vertex $V=(2n,0,\dots, 0)$. 
Let us identify the points $O'=(-2n+N,0,\dots, 0)$, $V'=(2n-N,0,\dots, 0)$, $O_1=(-2n+N, N,0,\dots, 0)$, $O_2=(-2n+N,-N,0,\dots, 0)$, $V_1=(2n-N,N,0,\dots, 0)$, $V_2=(2n-N,
-N,0,\dots, 0)$, and then build the paths $\gamma_1$ and $\gamma_2$ from source to sink in the following way: 
$\gamma_1$ is the shortest path consisting of $2n+2N$ vertices that contains $O$, $O'$, $O_1$, $V_1$, $V'$, and $V$; $\gamma_2$ is the shortest path consisting of $2n+2N$ vertices that contains $O$, $O'$, $O_2$, $V_2$, $V'$, and $V$. The configuration is shown in Figure \ref{fig|disjointLanes01}.

\begin{figure}[t]
\centering
\begin{tikzpicture} 
\draw[thick,dashed] (0,0) -- (1,0);
\draw[thick,dashed] (9,0) -- (10,0); 
\draw[thick,dashed] (1,0) -- (1,-1)--  (9,-1) -- (9,0);  
\draw[thick,dashed] (1,0) -- (1,1) --  (9,1) -- (9,0); 

\node at (0,0) {$\bullet$}; 
\node[left] at (0,0) {$O$};
\node at (1,0) {$\bullet$}; 
\node[above left] at (1,0) {$O'$};
\node at (1,1) {$\bullet$}; 
\node[above left] at (1,1) {$O_1$};
\node at (1,-1) {$\bullet$}; 
\node[below left] at (1,-1) {$O_2$};
\node at (10,0) {$\bullet$}; 
\node[right] at (10,0) {$V$}; 
\node at (9,0) {$\bullet$}; 
\node[above right] at (9,0) {$V'$};
\node at (9,1) {$\bullet$}; 
\node[above right] at (9,1) {$V_1$};
\node at (9,-1) {$\bullet$}; 
\node[below right] at (9,-1) {$V_2$};
\node[below] at (5.1,-1) {$\gamma_2$};
\node[above] at (5.1,1) {$\gamma_1$};
\end{tikzpicture}
\caption{Paths $\gamma_1$ and $\gamma_2$.}
\label{fig|disjointLanes01}
\end{figure}

On the section $O_1V_1$ of the path $\gamma_1$, starting at the vertex $(-2n+3N, N,0,\dots, 0)$, which is $2N$ units to the right of $O_1$, choose a block of $m_1$ consecutive edges and call it $C_1$. Similarly, $C_2$ is the block of $m_2$ consecutive edges on $\gamma_2$ that starts at the point $(-2n+3N,-N,0,\dots, 0)$ which is $2N$ units to the right of $O_2$. 

Let $B_1$ be the block of $\beta_1$ consecutive edges on $\gamma_1$ that terminates at the edge $(2n-3N,N,0,\dots, 0)$, which is $2N$ units to the left of $V_1$. Let $B_2$ be the block of $\beta_2$ consecutive edges on $\gamma_2$ that terminates at the edge $(2n-3N,N,0,\dots, 0)$, which is $2N$ units to the left of $V_2$. The sets $C_1$, $C_2$, $B_1$, and $B_2$ are shown in Figure \ref{fig|disjointLanes}.

\begin{figure}[t]
\centering
\begin{tikzpicture} 
\draw[thick,dashed] (0,0) -- (1,0);
\draw[thick,dashed] (9,0) -- (10,0); 
\draw[thick,dashed] (1,0) -- (1,-1)--  (2.5,-1); 
\draw[thick,dashed] (7.5,-1) -- (9,-1) -- (9,0);
\draw[thick,dashed] (1,0) -- (1,1) -- (2.5,1);
\draw[thick,dashed] (7.5,1) -- (9,1) -- (9,0);
\draw[thick,dashed] (4.1,1) -- (4.6,1);
\draw[thick,dashed] (5.7,1) -- (6.2,1);
\draw[thick,dashed] (5.7,-1) -- (6.2,-1);
\draw[thick,dashed] (4.1,-1) -- (4.6,-1);
\draw[very thick] (6.2,1)--(7.5,1);
\draw[very thick] (6.2,-1)--(7.5,-1);
\draw[very thick] (2.5,0.9) -- (4.1,0.9) -- (4.1,1.1) -- (2.5,1.1) -- cycle;    
\draw[very thick] (2.5,-0.9) -- (4.1,-0.9) -- (4.1,-1.1) -- (2.5,-1.1) -- cycle;  

\draw[very thick] (7.5,0.9) -- (6.2,0.9) -- (6.2,1.1) -- (7.5,1.1) -- cycle;    
\draw[very thick] (7.5,-0.9) -- (6.2,-0.9) -- (6.2,-1.1) -- (7.5,-1.1) -- cycle;    

\node[above] at (1.5,1.0) {$\gamma_1$};
\node[below] at (1.5,-1.0) {$\gamma_2$};
\node[above] at (8.5,1.0) {$\gamma_1$};
\node[below] at (8.5,-1.0) {$\gamma_2$};
\node[above] at (3.3,1.1) {$C_1$}; 
\node[below] at (3.3,-1.1) {$C_2$}; 
\node[above] at (6.9,1.1) {$B_1$};
\node[below] at (6.9,-1.1) {$B_2$};
\node at (5.1,-1) {$\cdots$};
\node at (5.1,1) {$\cdots$};
\end{tikzpicture}
\caption{Position of sets $C_1$, $C_2$, $B_1$, and $B_2$ on the paths $\gamma_1$ and $\gamma_2$.}
\label{fig|disjointLanes}
\end{figure}

The set $S$ will be defined to be $S=C_1\cup C_2$. We will now make an assignment $\omega$ of numbers from $\{a,b\}$ to every edge in such a way that $\partial_Sf(\omega)$ is extreme. 

The environment $\omega$ assigns the value $b$ to all the edges from $B_1$ and $B_2$ and all the edges outside of $\gamma_1\cup \gamma_2$. The environment $\omega$ assigns the value $a$ to every edge in $\gamma_1\cap \gamma_2\setminus (B_1\cup B_2)$. 
 
Define $D(m_1,m_2;\beta_1,\beta_2)$ to be the environment derivative $\partial_Sf(\omega)$ for the pair $(S,\omega)$ that was described above.

\begin{theorem}
\label{thm|dLanes} The number $D(m_1,m_2;\beta_1,\beta_2)$ is $0$ if $\beta_1-\beta_2\geq m_2$ or $\beta_2-\beta_1\geq m_1$. 
If both of the inequalities $\beta_1-\beta_2\leq m_2-1$ and $\beta_2-\beta_1\leq m_1-1$ are satisfied, then 
\begin{eqnarray}
D(m_1,m_2;\beta_1,\beta_2)&=& (b-a)\cdot (-1)^{m_1+m_2+\beta_1+\beta_2}\cdot\binom{m_1+m_2-2}{m_1+\beta_1-\beta_2-1}.
\label{eqn|dLanes}
\end{eqnarray}
\end{theorem}

\begin{proof} The proof is in the Appendix. \end{proof}

The next proposition implies the bounds \eqref{eqn|lBoundsUL} in Theorem \ref{thm|generalUpperBounds}.

\begin{proposition}\label{thm|badSets} For every $m\geq 2$, the following hold
\begin{eqnarray}&& \mathcal U_m\geq \binom{m-2}{\lceil\frac{m-2}2\rceil}\quad\mbox{and}\quad 
\mathcal L_m\leq -\binom{m-2}{\lceil\frac{m-2}2\rceil}. \label{eqn|ULBounds}
\end{eqnarray}
 \end{proposition}
\begin{proof}
If $m$ is odd, then the bound for $\mathcal U_m$ is attained when 
\eqref{eqn|dLanes} is applied to  $(m_1$, $m_2$; $\beta_1$, $\beta_2)$ $=$ $(\frac{m-1}2$, $\frac{m+1}2$; $1$, $0)$. The bound for $\mathcal L_m$ is attained for $(m_1$, $m_2$; $\beta_1$, $\beta_2)$ $=$ $(\frac{m+1}2$, $\frac{m-1}2$; $0$, $0)$.

If $m$ is even, then the bound for $\mathcal U_m$ is attained for $(m_1$, $m_2$; $\beta_1$, $\beta_2)$ $=$ $(\frac m2$, $\frac m2$, $0$, $0)$, while the bound for $\mathcal L_m$ is attained for $(m_1$, $m_2$; $\beta_1$, $\beta_2)$ $=$ $(\frac m2-1$, $\frac m2+1$; $1$, $0)$.
\end{proof}



\section{Almost sure bounds}
In this section we prove Theorem \ref{thm|generalUpperBounds}.
We will first prove the inequalities \eqref{eqn|gBoundFibonacci}. It suffices to prove the proposition below. 
\begin{proposition} Assume $\varphi$ is a random variable such that for every subset $T\subseteq W$ with $k$ elements we have $\partial_T\varphi\in [L,U]$. Then, the following inequality holds for every subset $S\subseteq W$ with $k+1$ elements. 
\begin{eqnarray}
\partial_S\varphi\in[L-U,U-L].
\label{eqn|gFibGen}
\end{eqnarray}
\end{proposition}
\begin{proof}
Let $s$ be an arbitrary element of $S$. Let $T=S\setminus \{s\}$. 
\begin{eqnarray*} \partial_S\varphi(\omega)&=&\partial_T\varphi(\sigma_s^b(\omega))-\partial_T\varphi(\sigma_s^a(\omega)).
\end{eqnarray*}
The result \eqref{eqn|gFibGen} immediately follows from the previous equality.
\end{proof}

\begin{theorem}\label{thm|AbsoluteValueBound}
Let $k\in W$ and let $S\subseteq W$ be a subset with at least two elements. The derivatives of the first-passage percolation time $f$ satisfy the following inequalities for every $\omega\in \Omega$.
\begin{eqnarray}
\partial_kf(\omega)&\in&[0,b-a]; \label{eqn|firstDerivativeBound}\\
\partial_{S}f(\omega)&\in&[-(b-a),b-a],\quad\mbox{if } |S|=2; \label{eqn|secondDerivativeBound}\\
\left|\partial_Sf(\omega)\right|&\leq& 2^{|S|-2}\cdot (b-a).\label{eqn|higherDerivativeBounds}
\end{eqnarray}
\end{theorem}
\begin{proof} The relation \eqref{eqn|firstDerivativeBound} is obvious because the function $f$ must increase, and it can increase by at most $b-a$ if one edge changes its passage time from $a$ to $b$. Observe that for sets $S$ with two elements, the relation \eqref{eqn|secondDerivativeBound} and the inequality \eqref{eqn|higherDerivativeBounds} follow directly from 
\eqref{eqn|firstDerivativeBound} and \eqref{eqn|gFibGen}.   
Observe that if $\varphi$ is any function, and not just first passage percolation time, then \eqref{eqn|LeibnitzRule} implies $|\partial_G\varphi(\omega)|\leq 2^{|G|} \|\varphi\|_{\infty}$ for every set $G$. Assume now that $S$ has at least two elements $k$ and $l$. Let $G=S\setminus\{k,l\}$. 
\begin{eqnarray*}
\left|\partial_Sf(\omega)\right|&=&\left|\partial_G\left(\partial_k\partial_lf(\omega)\right)\right|\leq 2^{|G|}\cdot \left\|\partial_k\partial_lf\right\|_{\infty}\\
&\leq& 2^{|G|}\cdot (b-a).
\end{eqnarray*} The proof is complete once we observe that $|G|=|S|-2$.
\end{proof}

\begin{theorem} \label{thm|Case2}
The values $\mathcal U_1$, $\mathcal L_1$, $\mathcal U_2$, and $\mathcal L_2$ are
given in the table \begin{eqnarray*}&&
\begin{array}{|c|r|r| }
\hline 
k&{\quad}1&{\quad}2\\
\hline
\mathcal U_k&1&1 \\
\hline 
\mathcal L_k&0&-1 \\
\hline
\end{array} 
\end{eqnarray*}
\end{theorem}
   \begin{proof} The inequality \eqref{eqn|firstDerivativeBound} implies that $\mathcal U_1\leq 1$ and $\mathcal L_1\geq 0$. Trivial examples establish the equalities 
   $\mathcal U_1=1$ and $\mathcal L_1=0$. 
For concreteness, take the source $(0,0)$ and the sink $(0,1)$ in $\mathbb Z^2$.
   If we assign $a$ to every edge and choose the edge $k$ to be anything other than the edge between the source and the sink, the derivative will be $0$. If we assign $b$ to every edge and choose $k$ to be the edge connecting the source and the sink, then the derivative will be $(b-a)$. 
   
 The case $k=2$ is the consequence of \eqref{eqn|secondDerivativeBound} and \eqref{eqn|ULBounds}. The inclusion \eqref{eqn|secondDerivativeBound} implies 
 $\mathcal U_2\leq 1$ and $\mathcal L_2\geq -1$. The inequalities \eqref{eqn|ULBounds} imply 
 $\mathcal U_2\geq 1$ and $\mathcal L_2\leq -1$.
   \end{proof}



\section{Evaluation of $\mathcal U_3$, $\mathcal L_3$, $\mathcal U_4$, and $\mathcal L_4$}

\subsection{Upper bounds}\label{subs|U3}
\begin{theorem}\label{thm|upperBoundS3}
Let $S\subseteq W$ be a subset with three elements. The first passage percolation time $f$ satisfies the following inequality for every $\omega\in\Omega$
\begin{eqnarray}
\partial_Sf(\omega)&\leq& (b-a). \label{eqn|upperBoundS3}
\end{eqnarray}
\end{theorem}
\begin{proof} Let $S=\{k,l,m\}$. We will make our notation shorter and write $\sigma^{(\theta_1,\theta_2,\theta_3)}(\omega)$ instead of $\sigma_k^{\theta_1}\circ \sigma_l^{\theta_2}\circ\sigma_m^{\theta_3}(\omega)$ for $(\theta_1,\theta_2,\theta_3)\in\{a,b\}^3$.  We will first prove that the inequality \eqref{eqn|upperBoundS3} is satisfied if 
\begin{eqnarray}\sigma^{(a,a,a)}(\omega) &\in& E_k^C\cup E_l^C\cup E_m^C.
\label{eqn|conditionU3AAA}
\end{eqnarray} 
If we assume $\sigma^{(a,a,a)}(\omega)\in E_k^C$, then Proposition \ref{thm|forComputer} (a) implies that 
$f(\sigma^{(b,a,a)}(\omega))$ and $f(\sigma^{(a,a,a)}(\omega))$ are equal, i.e. $\partial_kf(\sigma^{(a,a,a)}(\omega))=0$. Then,
\begin{eqnarray*}
\partial_Sf(\omega)&=& \partial_kf(\sigma^{(a,b,b)}(\omega)) 
-\partial_kf(\sigma^{(a,a,b)}(\omega))-\partial_kf(\sigma^{(a,b,a)}(\omega)) \\
&\leq&(b-a).
\end{eqnarray*} 
We have proved that $\sigma^{(a,a,a)}(\omega)\in E_k^C$ implies $\partial_Sf(\omega)\leq (b-a)$. In analogous ways we prove that 
the inequality \eqref{eqn|upperBoundS3} is implied if $\sigma^{(a,a,a)}(\omega)$ belongs to $E_l^C$ or $E_m^C$. 

We will now prove that \eqref{eqn|upperBoundS3} holds if \begin{eqnarray}\sigma^{(b,a,b)}(\omega)\not\in \hat E_k^C\cap E_l\cap \hat E_m^C.
\label{eqn|conditionViolatedU3BAB}
\end{eqnarray}

If $\sigma^{(b,a,b)}(\omega)\in E_l^C$, then 
$\partial_lf(\sigma^{(b,a,b)}(\omega))=0$,  
due to Proposition 
\ref{thm|forComputer} (a).  The derivative $\partial_Sf(\omega)$ becomes 
\begin{eqnarray*}
\partial_Sf(\omega)&=&-\partial_lf(\sigma^{(a,a,b)}(\omega))-
 \partial_lf(\sigma^{(b,a,a)}(\omega))+\partial_l f(\sigma^{(a,a,a)}(\omega)) \\
&\leq&-0-0+(b-a)=b-a.
\end{eqnarray*}
Assume now that $\sigma^{(b,a,b)}(\omega)\in \hat E_k$. Proposition \ref{thm|forComputer} (b) implies 
$\partial_kf(\sigma^{(a,a,b)}(\omega))=b-a$. Therefore,   
\begin{eqnarray*}\partial_Sf(\omega)&=&-(b-a) + \partial_kf(\sigma^{(a,b,b)}(\omega))-\partial_kf(\sigma^{(a,b,a)}(\omega)) +
\partial_kf(\sigma^{(a,a,a)}(\omega))\\
&\leq&-(b-a)+(b-a)-0+(b-a)=(b-a).
\end{eqnarray*}
Since the case $\sigma^{(b,a,b)}(\omega)\in \hat E_m$ is analogous, we have completed the proof of \eqref{eqn|conditionViolatedU3BAB}.
In analogous way we prove that $\partial_Sf(\omega)\leq (b-a)$ holds if either of the following two inclusions is satisfied:
\begin{eqnarray*} \sigma^{(a,b,b)}(\omega)\not\in E_k\cap \hat E_l^C\cap \hat E_m^C\quad\mbox{or}\quad
 \sigma^{(b,b,a)}(\omega)\not\in \hat E_k^C\cap \hat E_l^C\cap E_m.
\end{eqnarray*}
It remains to prove \eqref{eqn|upperBoundS3} if we assume that all of the following four conditions are satisfied 
\begin{eqnarray}
\sigma^{(a,b,b)}(\omega)&\in& E_k\cap\hat  E_l^C\cap \hat E_m^C,\label{eqn|noHelpfulABB}\\
\sigma^{(b,a,b)}(\omega)&\in& \hat E_k^C\cap E_l\cap \hat E_m^C,\label{eqn|noHelpfulBAB}\\
\sigma^{(b,b,a)}(\omega)&\in& \hat E_k^C\cap\hat  E_l^C\cap E_m,\label{eqn|noHelpfulBBA}\\
\sigma^{(a,a,a)}(\omega) &\in& E_k \cap E_l \cap E_m. 
\label{eqn|noHelpfulAAA}
\end{eqnarray} 

Let $\gamma$ be a geodesic on $\sigma^{(a,a,a)}(\omega)$. Since we assumed \eqref{eqn|noHelpfulAAA}, all of the edges $k$, $l$, and $m$ must belong to $\gamma$. Without loss of generality, assume that they appear in this order: $k$, $l$, $m$. Let $\gamma(b,a,b) $ be a geodesic on $\sigma^{(b,a,b)}(\omega)$. According to \eqref{eqn|noHelpfulBAB}, the geodesic $\gamma(b,a,b)$ must pass through $l$ and must not pass through either of $k$, $m$. Let $\gamma^-(b,a,b)$ and $\gamma^+(b,a,b)$ be the sections of $\gamma(b,a,b)$ before and after the edge $l$. The sections $\gamma^-(b,a,b)$ and $\gamma^+(b,a,b)$ are assumed not to contain the edge $l$. 
Let us denote by $T^-(b,a,b)$ and $T^+(b,a,b)$ the passage times over the sections $\gamma^-(b,a,b)$ and $\gamma^+(b,a,b)$.

When the geodesic  $\gamma$ passes through the edge  $l$, it must pass through both of its endpoints. One of these endpoints is encountered before the other. Let $L^-$ denote the initial endpoint and $L^+$ the terminal endpoint of the edge $l$.

There are two possible orders of $L^-$ and $L^+$ on the path $\gamma(b,a,b)$. The first possibility is that the endpoint $L^-$ appears before $L^+$ on $\gamma^{(b,a,b)}$. This possibility is shown with solid line in Figure \ref{fig|worstCaseUpperBound3}. The second possibility is that $L^+$ appears before $L^-$. The dashed line in Figure \ref{fig|worstCaseUpperBound3} represents the case in which this occurs.

 \begin{figure}[t]
 \centering
\begin{tikzpicture}
\draw[thick] (0.5,4) -- (11.5,4);
\node at (2.6,4) {$\bullet$};
\node at (3,4) {$\bullet$};
\node at (4.6,4) {$\bullet$};
\node at (5,4) {$\bullet$};
\node at (6.6,4) {$\bullet$};
\node at (7,4) {$\bullet$}; 
\draw[very thick] (2.6,4) -- (3,4);
\draw[very thick] (4.6,4) -- (5,4);
\draw[very thick] (6.6,4) -- (7,4); 

\node[above] at (2.8,4) {$k$};
\node[above] at (4.8,4) {$l$};
\node[above] at (6.8,4) {$m$}; 
\node[below] at (4.3,4) {$L^-$};
\node[below] at (5.3,4) {$L^+$};

\draw[thick] (0.5, 5.5) .. controls (2,5.5) and (4,5.5) .. (4.6,4);
\node[above] at (1.7,5.7) {$\gamma^-(b,a,b)$};

\draw[thick,dashed] (0.5, 6.5) .. controls (2,6.5) and (5,6.5) .. (5,4); 
 
\draw[thick] (5,4) .. controls (4.6,2) and (9,2.5) .. (11.5,2.5);
\node[below] at (10.5,2.3) {$\gamma^+(b,a,b)$};

\draw[thick,dashed] (4.6,4) .. controls (4.1,1.5) and (9,1.5) .. (11.5,1.5); 
 
\end{tikzpicture}
\caption{Configuration of paths described in the proof of Theorem \ref{thm|upperBoundS3}.}
\label{fig|worstCaseUpperBound3}
\end{figure}

Let us denote by $T^-$ the passage time on the environment $\sigma^{(a,a,a)}(\omega)$ over the segment of $\gamma$ between the source and just before reaching the edge $l$. Denote by $T^+$ the passage time on $\sigma^{(a,a,a)}(\omega)$ over the segment
after the edge $l$ until the sink. The following equations must hold
\begin{eqnarray}
f(\sigma^{(a,a,a)}(\omega))&=& T^-+T^++a,\label{eqn|U3TAAA}\\
f(\sigma^{(b,a,b)}(\omega))&=& T^-(b,a,b)+T^+(b,a,b)+a.\label{eqn|U3TBAB}.\end{eqnarray}
In the case when $L^-$ appears before $L^+$ on $\gamma(b,a,b)$, the following two inequalities are satisfied:
\begin{eqnarray}
f(\sigma^{(a,a,b)}(\omega))&\leq& T^-+T^+(b,a,b)+a,\label{eqn|U3TAAB}\\
f(\sigma^{(b,a,a)}(\omega))&\leq& T^-(b,a,b)+T^++a.\label{eqn|U3TBAA}
\end{eqnarray}
If $L^+$ appears before $L^-$, then even stronger relations hold:
\begin{eqnarray}
f(\sigma^{(a,a,b)}(\omega))&\leq& T^-+T^+(b,a,b),\label{eqn|U3TAABS}\\
f(\sigma^{(b,a,a)}(\omega))&\leq& T^-(b,a,b)+T^+.\label{eqn|U3TBAAS}
\end{eqnarray}
Clearly, \eqref{eqn|U3TAABS} and \eqref{eqn|U3TBAAS} imply \eqref{eqn|U3TAAB} and \eqref{eqn|U3TBAA} are satisfied. Hence,  \eqref{eqn|U3TAAB} and \eqref{eqn|U3TBAA} hold always.
The relations \eqref{eqn|U3TAAA}, \eqref{eqn|U3TBAB}, \eqref{eqn|U3TAAB}, and \eqref{eqn|U3TBAA} imply 
\begin{eqnarray} f(\sigma^{(a,a,b)}(\omega))+f(\sigma^{(b,a,a)}(\omega))-f(\sigma^{(b,a,b)}(\omega))-f(\sigma^{(a,a,a)}(\omega))
&\leq& 0.\label{eqn|U3HelpfulCancelation}
\end{eqnarray}
Therefore, the derivative $\partial_Sf(\omega)$ can be bounded in the following way
\begin{eqnarray}
\partial_Sf(\omega)&\leq& \partial_kf(\sigma^{(a,b,b)}(\omega))-\partial_kf(\sigma^{(a,b,a)}(\omega)).
\label{eqn|U3ReducedToTwo}
\end{eqnarray}
Since the derivatives of the first order belong to $[0,b-a]$, the first term on the right-hand side in
\eqref{eqn|U3ReducedToTwo} is smaller than or equal to $(b-a)$ while $\partial_kf(\sigma^{(a,b,a)}(\omega))$ is greater than or equal to $0$. Therefore, \eqref{eqn|U3ReducedToTwo}  implies \eqref{eqn|upperBoundS3}.
\end{proof}

\subsection{Direction switching}\label{subs|DSwitch}
We first prove that we cannot have two geodesics on the same environment that go in opposite directions over an edge.
\begin{proposition}\label{thm|DSwitchTrivial} Assume that $k$ is an edge and that $K_1$ and $K_2$ are the endpoints of $k$. Assume that $\omega$ is a fixed environment and that there are two geodesics $\mu_1$ and $\mu_2$ on $\omega$ that contain $k$. Then the order of $K_1$ and $K_2$ on $\mu_1$ must be the same as the order of $K_1$ and $K_2$ on $\mu_2$. \end{proposition}
\begin{proof}

 \begin{figure}[t]
 \centering
\begin{tikzpicture}
\draw[thick,dashed] (0,2) -- (5,2);
\draw[thick,dotted] (5,2)--(10,2);
\draw[thick,dotted] (0,0)--(5,0);
\draw[thick,dashed] (5,0)--(10,0); 
\draw[very thick] (5,0)--(5,2);
\node[above] at (5,2) {$K_1$};
\node[below] at (5,0) {$K_2$};
\node[right] at (5,1) {$k$};
\node[above] at (1,2) {$\mu_1^-$};
\node[above] at (9,2) {$\mu_2^+$};
\node[below] at (1,0) {$\mu_2^-$};
\node[below] at (9,0) {$\mu_1^+$}; 
\node at (5,2) {$\bullet$};
\node at (5,0) {$\bullet$}; 
\end{tikzpicture}
\caption{Configuration described in proof of Proposition \ref{thm|DSwitchTrivial}.}
\label{fig|DSwitchingTrivial}
\end{figure}

Assume the contrary, that the order of $K_1$ and $K_2$ is different on $\mu_1$ and $\mu_2$. Assume that $K_1$ appears before $K_2$ on $\mu_1$ and that $K_2$ appears before $K_1$ on $\mu_2$. Let $\mu_1^-$ be the section of $\mu_1$ from the source to the vertex $K_1$ and $\mu_1^+$ the section of $\mu_1$ from the vertex $K_2$ to the sink. Let $\mu_2^-$ be the section of $\mu_2$ from the source to $K_2$ and $\mu_2^+$ the section of $\mu_2$ from $K_1$ to the sink. 
The paths $\mu_1^-\cup \mu_2^+$ and $\mu_2^-\cup \mu_1^+$ are two paths between the source and the sink. Their passage times are at least as big as the passage times over the geodesics $\mu_1$ and $\mu_2$. Therefore, \begin{eqnarray} T(\mu_1) \leq T(\mu_1^-\cup \mu_2^+)\quad\mbox{and}\quad T(\mu_2)\leq T(\mu_2^-\cup \mu_1^+).
\label{eqn|DSwitchTrivialmuIneqs}\end{eqnarray}
 Moreover,
\begin{eqnarray}
T(\mu_1)&=& T(\mu_1^-)+T(\mu_1^+)+\omega_k, \label{eqn|DSwitchTrivialmu1}\\ 
T(\mu_2)&=& T(\mu_2^-)+T(\mu_2^+)+\omega_k, \label{eqn|DSwitchTrivialmu2}\\
T(\mu_1^-\cup \mu_2^+)&=& T(\mu_1^-)+T(\mu_2^+), \label{eqn|DSwitchTrivialmu1mmu2p}\\
T(\mu_2^-\cup \mu_1^+)&=& T(\mu_2^-)+T(\mu_1^+). \label{eqn|DSwitchTrivialmu2mmu1p}
\end{eqnarray}
If we add the two inequalities from \eqref{eqn|DSwitchTrivialmuIneqs} and then use \eqref{eqn|DSwitchTrivialmu1}, \eqref{eqn|DSwitchTrivialmu2}, \eqref{eqn|DSwitchTrivialmu1mmu2p}, and \eqref{eqn|DSwitchTrivialmu2mmu1p}, we obtain that $2\omega_k\leq 0$. This is a contradiction because $\omega_k\in\{a,b\}$ and must be strictly positive.
\end{proof}

In the next proposition we will prove that the direction of the flow cannot switch if only one edge is flipped from $a$ to $b$. 

\begin{proposition}\label{thm|DSwitch2} Assume that $k$ and $x$ are two edges and 
that $K_1$ and $K_2$ are the endpoints of $k$.
Assume that $\omega$ is a fixed environment such that 
on $\sigma_x^a(\omega)$ there is a geodesic 
$\lambda_a$ that contains $k$ and on $\sigma_x^b(\omega)$ there is a geodesic $\lambda_b$ that contains $k$.
Then, the order of $K_1$ and $K_2$ on $\lambda_a$ is the same as their order on $\lambda_b$. 
\end{proposition} 
\begin{proof}  We will only consider the case $k\neq x$. The case $k=x$ is easier, and it will be discussed in the remark below the proof.
Assume the contrary, that $K_1$ appears before $K_2$ on $\lambda_a$, but $K_2$ appears before $K_1$ on $\lambda_b$. 
Let $\lambda_a^-$  be the section of $\lambda_a$ before $K_1$. Let $\lambda_a^+$ be the section of $\lambda_a$ after $K_2$. Define $\lambda_b^-$ and $\lambda_b^+$ as the sections of $\lambda_b$ that appear before $K_2$ and after $K_1$, respectively. 

We first show that \(x \notin \lambda_b\). Suppose, for the sake of contradiction, that \(x \in \lambda_b\). Even though the value \(b\) is assigned to the edge \(x\), the geodesic \(\lambda_b\) passes through \(x\). On the environment $\sigma_x^a(\omega)$, when the value \(a\) is assigned to \(x\), the path \(\lambda_b\) must therefore still be a geodesic. Hence, \(\lambda_a\) and \(\lambda_b\) are both geodesics in \(\sigma_x^a(\omega)\), contradicting Proposition~\ref{thm|DSwitchTrivial}.

Therefore, $x\not\in\lambda_b$. We will now prove that we must have $x\in\lambda_a$. Assume that $x\not \in \lambda_a$. This means that even though $a$ is assigned to $x$, there is a geodesic $\lambda_a$ that omits $x$. Hence, if $b$ is assigned to $x$, then $\lambda_a$ will be a geodesic as well. We now have that $\lambda_a$ and $\lambda_b$ are geodesics on $\sigma_x^b(\omega)$, which is impossible because of Proposition \ref{thm|DSwitchTrivial}.

Hence, $x\in\lambda_a$. Without loss of generality, assume that $x\in\lambda_a^-$, as shown in Figure \ref{fig|DSwitching2}.
 \begin{figure}[t]
 \centering
\begin{tikzpicture}
\draw[thick,dashed] (0,2) -- (5,2);
\draw[thick,dotted] (5,2)--(10,2);
\draw[thick,dotted] (0,0)--(5,0);
\draw[thick,dashed] (5,0)--(10,0);
\draw[very thick] (2,2)--(4,2); 
\draw[very thick] (5,0)--(5,2);
\node[above] at (5,2) {$K_1$};
\node[below] at (5,0) {$K_2$};
\node[right] at (5,1) {$k$};
\node[above] at (1,2) {$\lambda_a^-$};
\node[above] at (9,2) {$\lambda_b^+$};
\node[below] at (1,0) {$\lambda_b^-$};
\node[below] at (9,0) {$\lambda_a^+$};
\node at (2,2) {$\bullet$};
\node at (4,2) {$\bullet$}; 
\node at (5,2) {$\bullet$};
\node at (5,0) {$\bullet$};
\node[above] at (3,2) {$x$}; 
\end{tikzpicture}
\caption{Configuration described in proof of Proposition \ref{thm|DSwitch2}.}
\label{fig|DSwitching2}
\end{figure}

The value $\omega_k$ from $\{a,b\}$ that is assigned to the edge $k$ is the same on $\sigma_x^a(\omega)$ and $\sigma_x^b(\omega)$.
The path $\lambda_a=\lambda_a^-\cup\{k\}\cup \lambda_a^+$ is a geodesic on $\sigma_x^a(\omega)$, while $\lambda_a^-\cup\lambda_b^+$ is just a path from the source to the sink. Therefore, we must have 
\begin{eqnarray}
\omega_k+T(\lambda_a^+)&\leq& T(\lambda_b^+). \label{eqn|Switch20a}
\end{eqnarray}
The path $\lambda_b$ is a geodesic on $\sigma_x^b(\omega)$. The path $\lambda_b^-\cup \lambda_a^+$ does not have to be a geodesic, hence 
\begin{eqnarray}
\omega_k+T(\lambda_b^+)&\leq& T(\lambda_a^+). \label{eqn|Switch20b}
\end{eqnarray}
If we add \eqref{eqn|Switch20a} and \eqref{eqn|Switch20b}, we obtain $2\omega_k\leq 0$. This is a contradiction. 
\end{proof}
\noindent{\em Remark.} In the case $k=x$, we can skip all the case-work needed to establish that $x\in\lambda_a$. We just make a slight modification to the proof by replacing $\omega_k$ by $a$ in \eqref{eqn|Switch20a} and by $b$ in \eqref{eqn|Switch20b}. The conclusion $a+b\leq 0$ leads to contradiction in this case.

\begin{definition}\label{def|DSwitch3} Assume that $k$, $l$, and $m$ are three different fixed edges. 
The edge $k$ is called a direction switching edge with respect to the pair of edges $(l,m)$ on the environment $\omega$ if the two endpoints $K_1$ and $K_2$ of $k$ satisfy:
On the environment $ \sigma_l^a\sigma_m^b(\omega)$ there is a geodesic such that $K_1$ is before $K_2$; while on the environment 
$ \sigma_l^b\sigma_m^a(\omega)$ there is a geodesic such that $K_2$ is before $K_1$. 
\end{definition}

\begin{proposition}\label{thm|S3DSwitchConfiguration} Assume that $S=\{k,l,m\}$ and that $k$ is a direction switching edge with respect to $(l,m)$. 
Let $K_1$ and $K_2$ be the endpoints of $k$. Let $\lambda$ and $\mu$ be two fixed paths. Assume that $\lambda$ is a geodesic on $\sigma_l^a\sigma_m^b(\omega)$ and that $K_1$ appears before $K_2$ on $\lambda$. Assume that $\mu$ is a geodesic on $\sigma_l^b\sigma_m^a(\omega)$ and that $K_2$ appears before $K_1$ on $\mu$. Denote by $\lambda^-$ the section of $\lambda$ between the source and $K_1$ and by $\lambda^+$ the section of $\lambda$ between $K_2$ and the sink. Similarly, let $\mu^-$ and $\mu^+$ be the sections of $\mu$ before and after the edge $k$.  
Then, one of the following two mathematical propositions $P(\lambda^-,\mu^+)$ and $P(\lambda^+,\mu^-)$ is satisfied
\begin{eqnarray}
P(\lambda^-,\mu^+)&\equiv&\left\{ (l\in \lambda^-) \quad\mbox{and}\quad (m\in\mu^+)\right\},\label{eqn|defLmMp}\\
P(\lambda^+,\mu^-)&\equiv&\left\{ (l\in \lambda^+) \quad\mbox{and}\quad (m\in\mu^-)\right\}.\label{eqn|defLpMm}
\end{eqnarray}
\end{proposition}
\begin{proof} Due to Propositions \ref{thm|DSwitchTrivial} and \ref{thm|DSwitch2}, we must have $\{l,m\}\subseteq\lambda\cup \mu$.

Let us prove that we can't have $\{l,m\}\subseteq\lambda$ or $\{l,m\}\subseteq \mu$. 
Assume the contrary, that both of $l$ and $m$ belong to $\lambda$. 
We have four cases: when $\{l,m\}\subseteq \lambda^-$; $\{l,m\}\subseteq \lambda^+$; $l\subseteq \lambda^-$ and $m\in\lambda^+$; and  $l\subseteq \lambda^+$ and $m\in\lambda^-$. 

The first two cases are simpler and analogous. The passage times over $\lambda^-$, $\lambda^+$, $\mu^-$, and $\mu^+$ remain the same if the environment changes from $\sigma_l^a\sigma_m^b(\omega)$ to
$\sigma_l^b\sigma_m^a(\omega)$. Therefore, both $\mu$ and $\lambda$ are geodesics on each of the two environments. 
The path $\lambda^-\cup \mu^+$ has passage time at least as big as the geodesic $\lambda$, hence
\[T(\lambda^-)+T(\mu^+)
\geq T(\lambda^-)+\omega_k+T(\lambda^+).\]
In an analogous way we obtain 
\[T(\mu^-)+T(\lambda^+)\geq T(\mu^-)+\omega_k+T(\mu^+).\]
If we add the last two inequalities, we obtain $0\geq 2\omega_k$, which is impossible because $\omega_k\in\{a,b\}$ and $b>a>0$.

 \begin{figure}[t]
 \centering
\begin{tikzpicture}
\draw[thick,dashed] (0,2) -- (5,2);
\draw[thick,dotted] (5,2)--(10,2);
\draw[thick,dotted] (0,0)--(5,0);
\draw[thick,dashed] (5,0)--(10,0);
\draw[very thick] (2,2)--(4,2);
\draw[very thick] (6,0)--(8,0);
\draw[very thick] (5,0)--(5,2);
\node[above] at (5,2) {$K_1$};
\node[below] at (5,0) {$K_2$};
\node[right] at (5,1) {$k$};
\node[above] at (1,2) {$\lambda^-$};
\node[above] at (9,2) {$\mu^+$};
\node[below] at (1,0) {$\mu^-$};
\node[below] at (9,0) {$\lambda^+$};
\node at (2,2) {$\bullet$};
\node at (4,2) {$\bullet$};
\node at (6,0) {$\bullet$};
\node at (8,0) {$\bullet$};
\node at (5,2) {$\bullet$};
\node at (5,0) {$\bullet$};
\node[above] at (3,2) {$l$};
\node[below] at (7,0) {$m$};
\end{tikzpicture}
\caption{Case $l\in\lambda^-$ and $m\in \lambda^+$.}
\label{fig|LLmMLP}
\end{figure}
If we assume that $l\in \lambda^-$ and $m\in \lambda^+$, then we have a situation as shown in Figure \ref{fig|LLmMLP}. 
We start by analyzing the environment $\sigma_l^a\sigma_m^b(\omega)$. The passage over $
\lambda$ must be smaller than or equal to the passage over $\lambda^-\cup \mu^+$, i.e.
\begin{eqnarray} \omega_k+b+T(\lambda^+\setminus m)\leq T(\mu^+). \label{eqn|DSwitch2BothOnLambda01}\end{eqnarray} 
On the environment $\sigma_l^b\sigma_m^a(\omega)$, the passage over 
$\mu$ is smaller than or equal to the passage time over $\mu^-\cup \lambda^+$, which give us 
\begin{eqnarray}\omega_k+T(\mu^+)&\leq &a+T(\lambda^+\setminus\{m\}). \label{eqn|DSwitch2BothOnLambda02}\end{eqnarray}
If we add \eqref{eqn|DSwitch2BothOnLambda01} and
\eqref{eqn|DSwitch2BothOnLambda02} and then subtract $T(\lambda^+\setminus\{m\})+T(\mu^+)$ from both sides, we obtain 
$2\omega_k+b\leq a$. This is a contradiction
 because $\omega_k\in\{a,b\}$ and $b>a>0$.

 \begin{figure}[t]
 \centering
\begin{tikzpicture}
\draw[thick,dashed] (0,2) -- (5,2);
\draw[thick,dotted] (5,2)--(10,2);
\draw[thick,dotted] (0,0)--(5,0);
\draw[thick,dashed] (5,0)--(10,0);
\draw[very thick] (2,2)--(4,2);
\draw[very thick] (6,0)--(8,0);
\draw[very thick] (5,0)--(5,2);
\node[above] at (5,2) {$K_1$};
\node[below] at (5,0) {$K_2$};
\node[right] at (5,1) {$k$};
\node[above] at (1,2) {$\lambda^-$};
\node[above] at (9,2) {$\mu^+$};
\node[below] at (1,0) {$\mu^-$};
\node[below] at (9,0) {$\lambda^+$};
\node at (2,2) {$\bullet$};
\node at (4,2) {$\bullet$};
\node at (6,0) {$\bullet$};
\node at (8,0) {$\bullet$};
\node at (5,2) {$\bullet$};
\node at (5,0) {$\bullet$};
\node[above] at (3,2) {$m$};
\node[below] at (7,0) {$l$};
\end{tikzpicture}
\caption{Case $m\in\lambda^-$ and $l\in \lambda^+$.}
\label{fig|LLPMLM}
\end{figure}

The case $m\in \lambda^-$ and $l\in\lambda^+$ is very similar. It corresponds to Figure \ref{fig|LLPMLM}.
On $\sigma_l^a\sigma_m^b(\omega)$, we use that $T(\lambda)\leq T(\mu^-\cup \lambda^+)$ to obtain 
\begin{eqnarray}
T(\lambda^-\setminus\{m\})+b+\omega_k&\leq& T(\mu^-). \label{eqn|DSwitch2BothOnLambda03}
\end{eqnarray}
On $\sigma_l^b\sigma_m^a(\omega)$, the passage time over $\mu$ must be smaller than or equal to the passage time over $\lambda^-\cup \mu^+$. The inequality 
$T(\mu)\leq T(\lambda^-\cup \mu^+)$ is equivalent to 
\begin{eqnarray}
T(\mu^-)+\omega_k&\leq& T(\lambda^-\setminus\{m\})+a. \label{eqn|DSwitch2BothOnLambda04}
\end{eqnarray}
Adding \eqref{eqn|DSwitch2BothOnLambda03} and \eqref{eqn|DSwitch2BothOnLambda04} and subtracting $ T(\lambda^-\setminus\{m\})+T(\mu^-)$ implies
$2\omega_k+b\leq a$, which is not possible.

We proved that $\{l,m\}\subseteq \lambda\cup \mu$, $\{l,m\}\not\subseteq\lambda$, and $\{l,m\}\not\subseteq \mu$.

Therefore, $\lambda$ contains exactly one of the edges $\{l,m\}$ and $\mu$ contains the other one.  Let us prove that we can't have $m\in\lambda$ and $l\in\mu$. Assume the contrary. The path $\lambda$ is a geodesic when the value $a$ is assigned to $l$ and the value $b$ is assigned to $m$; while $\mu$ is a geodesic when $a$ is assigned to $m$ and $b$ is assigned to $l$. Since we assumed $m\in\lambda\setminus \mu$ and $l\in\mu\setminus \lambda$, we have 
\begin{eqnarray*} f(\sigma_l^a\sigma_m^b(\omega))&=& T(\lambda, \sigma_l^a\sigma_m^b(\omega))=T(\lambda,  \sigma_m^b(\omega)).\end{eqnarray*}
In the last equality we used that $l\not\in \lambda$. We now use that $m\in \lambda$ to further conclude \begin{eqnarray*}
f(\sigma_l^a\sigma_m^b(\omega))&=& T(\lambda,  \sigma_m^a(\omega))+(b-a)=T(\lambda, \sigma_l^b \sigma_m^a(\omega))+(b-a).
\end{eqnarray*}
Again, in the last equality we used that $l\not\in \lambda$. Now we use the last equation and the fact that $\mu$ is a geodesic on the environment 
$\sigma_l^b\sigma_m^a(\omega)$ to obtain
\begin{eqnarray*}
f(\sigma_l^a\sigma_m^b(\omega))
&\geq& f( \sigma_l^b \sigma_m^a(\omega))+(b-a)=T(\mu,\sigma_l^b \sigma_m^a(\omega))+(b-a).
\end{eqnarray*}
Since $m\not\in \mu$ and $l\in \mu$, the last inequality implies 
\begin{eqnarray*}
f(\sigma_l^a\sigma_m^b(\omega))
&\geq&T(\mu,\sigma_l^b (\omega))+(b-a) =T(\mu,\sigma_l^a (\omega))+2(b-a).\end{eqnarray*}
We use that $m\not \in \mu$ to further transform the last inequality into 
\begin{eqnarray*}
f(\sigma_l^a\sigma_m^b(\omega))
&\geq&T(\mu,\sigma_l^a\sigma_m^b (\omega))+2(b-a)\\
&\geq&f(\sigma_l^a\sigma_m^b(\omega))+2(b-a).
\end{eqnarray*}
This is a contradiction because $b>a$. 
Therefore, $l\in\lambda$ and $m\in\mu$.

In order to prove that $P(\lambda^-,\mu^+)$ or $P(\lambda^+,\mu^-)$ is satisfied, we must prove that neither of the following two propositions $P(\lambda^-,\mu^-)$, $P(\lambda^+,\mu^+)$ can hold.
\begin{eqnarray}
P(\lambda^-,\mu^-)&\equiv& \left\{(l\in \lambda^-) \quad\mbox{and}\quad (m\in\mu^-)\right\},\label{eqn|defLmMm}\\
P(\lambda^+,\mu^+)&\equiv&\left\{ (l\in \lambda^+) \quad\mbox{and}\quad (m\in\mu^+) \right\}.\label{eqn|defLpMp}
\end{eqnarray}

 \begin{figure}[t]
 \centering
\begin{tikzpicture}
\draw[thick,dashed] (0,2) -- (5,2);
\draw[thick,dotted] (5,2)--(10,2);
\draw[thick,dotted] (0,0)--(5,0);
\draw[thick,dashed] (5,0)--(10,0);
\draw[very thick] (2,2)--(4,2);
\draw[very thick] (2,0)--(4,0);
\draw[very thick] (5,0)--(5,2);
\node[above] at (5,2) {$K_1$};
\node[below] at (5,0) {$K_2$};
\node[right] at (5,1) {$k$};
\node[above] at (1,2) {$\lambda^-$};
\node[above] at (9,2) {$\mu^+$};
\node[below] at (1,0) {$\mu^-$};
\node[below] at (9,0) {$\lambda^+$};
\node at (2,2) {$\bullet$};
\node at (4,2) {$\bullet$};
\node at (2,0) {$\bullet$};
\node at (4,0) {$\bullet$};
\node at (5,2) {$\bullet$};
\node at (5,0) {$\bullet$};
\node[above] at (3,2) {$l$};
\node[below] at (3,0) {$m$};
\end{tikzpicture}
\caption{Geodesics $\lambda$ and $\mu$ if the proposition $P(\lambda^-,\mu^-)$ is satisfied.}
\label{fig|PLmMm}
\end{figure}

Assume that $P(\lambda^-,\mu^-)$ is satisfied. This configuration is shown in Figure \ref{fig|PLmMm}.
Then, since $\lambda^-\cup \mu^+$ is not a geodesic on the environment $\sigma_l^a\sigma_m^b(\omega)$, while $\lambda$ is, we obtain 
\begin{eqnarray}
T(\mu^+)&\geq &
\omega_k+T(\lambda^+). \label{eqn|PLmMmE1}
\end{eqnarray}
Since $\mu^-\cup \lambda^+$ is not a geodesic on  $\sigma_l^b\sigma_m^a(\omega)$, while $\mu$ is, we derive
\begin{eqnarray}
T(\lambda^+)&\geq&
\omega_k+T(\mu^+). \label{eqn|PLmMmE2}
\end{eqnarray}
Adding the inequalities \eqref{eqn|PLmMmE1} and  \eqref{eqn|PLmMmE2} implies 
$0>2\omega_k$, which is a contradiction.
In an analogous way we treat the case in which $P(\lambda^+,\mu^+)$ holds. 

Thus, we have proved that one of the mathematical propositions 
$P(\lambda^-,\mu^-)$, 
$P(\lambda^+,\mu^+)$ must be satisfied.
\end{proof}

\begin{proposition}\label{thm|dSwitchImpliesOmegaKEqA}
Assume that $S=\{k,l,m\}$ and that for fixed environment $\omega$, $k$ is a direction switching edge with respect to 
$(l,m)$ on $\omega$. Then, $\omega_k=a$ and the numbers $a$ and $b$ must satisfy 
\begin{eqnarray} b&\geq&3a.\label{eqn|BBigger3a}
\end{eqnarray}
\end{proposition}
\begin{proof} 
Let us define $\lambda$, $\mu$, $\lambda^{\pm}$, and $\mu^{\pm}$ as in Proposition \ref{thm|S3DSwitchConfiguration}. The same proposition allows us to assume that $l\in \lambda^-$ and $m\in \mu^+$ as in Figure \ref{fig|DSwitchNonTrivialCase}.

 \begin{figure}[t]
 \centering
\begin{tikzpicture}
\draw[thick,dashed] (0,2) -- (5,2);
\draw[thick,dotted] (5,2)--(10,2);
\draw[thick,dotted] (0,0)--(5,0);
\draw[thick,dashed] (5,0)--(10,0);
\draw[very thick] (2,2)--(4,2);
\draw[very thick] (6,2)--(8,2);
\draw[very thick] (5,0)--(5,2);
\node[above] at (5,2) {$K_1$};
\node[below] at (5,0) {$K_2$};
\node[right] at (5,1) {$k$};
\node[above] at (1,2) {$\lambda^-$};
\node[above] at (9,2) {$\mu^+$};
\node[below] at (1,0) {$\mu^-$};
\node[below] at (9,0) {$\lambda^+$};
\node at (2,2) {$\bullet$};
\node at (4,2) {$\bullet$};
\node at (6,2) {$\bullet$};
\node at (8,2) {$\bullet$};
\node at (5,2) {$\bullet$};
\node at (5,0) {$\bullet$};
\node[above] at (3,2) {$l$};
\node[above] at (7,2) {$m$};
\end{tikzpicture}
\caption{Geodesics $\lambda$ and $\mu$ under the assumptions $l\in\lambda^-$ and $m\in \mu^+$.}
\label{fig|DSwitchNonTrivialCase}
\end{figure}
Let us get some consequences of the fact that $\lambda$ is a geodesic on 
$\sigma_l^a\sigma_m^b(\omega)$. The path $\mu^-\cup \lambda^+$ cannot have a shorter passage time than $\lambda$. Hence, 
the passage time over $\lambda^-\cup \{k\}$ must be smaller than or equal to the passage time over $\mu^-$, i.e. 
\begin{eqnarray}T(\mu^-)&\geq& T(\lambda^-\setminus\{l\})+a+\omega_k.
\label{eqn|bgeq2ai01}
\end{eqnarray} 
On the environment
$\sigma_l^b\sigma_m^a(\omega)$, the path $\mu$ is a geodesic, while $\lambda^-\cup\mu^+$ does not have to be. Hence, the section $\mu^-\cup\{k\}$ has shorter passage time than $\lambda^-$, when the value $b$ is assigned to $l$. Therefore, 
\begin{eqnarray}
T(\mu^-)+\omega_k&\leq& T(\lambda^-\setminus\{l\})+b. 
\label{eqn|bgeq2ai02}
\end{eqnarray}
The inequalities \eqref{eqn|bgeq2ai01} and \eqref{eqn|bgeq2ai02} imply $b\geq a+2\omega_k$. This inequality is impossible if $\omega_k=b$. Thus, $\omega_k=a$ and 
\eqref{eqn|BBigger3a} must hold.
\end{proof}

\begin{proposition}\label{thm|lowerBoundS3DSwitch} 
Assume that $S=\{k,l,m\}$ and that for fixed environment $\omega$, $k$ is a direction switching edge with respect to 
$(l,m)$.
The following inequality must hold
\begin{eqnarray} 
\partial_Sf(\omega)&\geq&3a-b.\label{eqn|BoundS3DirectionSwitch}
\end{eqnarray}
\end{proposition}
\begin{proof} According to Proposition \ref{thm|dSwitchImpliesOmegaKEqA}, we know that $\omega_k=a$. 
We will write $\sigma^{\overrightarrow{\xi}}(\omega)$ instead of 
$\sigma^{\overrightarrow{\xi}}_{(k,l,m)}(\omega)$ for $\overrightarrow{\xi}\in\{a,b\}^3$. 
The following two inequalities are obvious because they follow from $\partial_zf(\omega)\geq 0$ for every edge $z$.
\begin{eqnarray}
f(\sigma^{(b,b,b)} (\omega))\geq f(\sigma^{(a,b,b)} (\omega)) &\mbox{and}& 
f(\sigma^{(b,a,a)} (\omega))\geq f(\sigma^{(a,a,a)} (\omega)). 
\label{eqn|LBS3DSwitchObv0102}
\end{eqnarray}
Let us define $\lambda$, $\mu$, $\lambda^{\pm}$, and $\mu^{\pm}$ as in Proposition \ref{thm|S3DSwitchConfiguration}. Without loss of generality, due to Proposition \ref{thm|S3DSwitchConfiguration}, we may assume that $l\in \lambda^-$ and $m\in \mu^+$ as in Figure \ref{fig|DSwitchNonTrivialCase}. Assume that $\omega_k=b$.

Since $\lambda$ is a geodesic on $\sigma^{(a,a,b)}(\omega)$, we have 
\begin{eqnarray}
f(\sigma^{(a,a,b)}(\omega))&=& T(\lambda^-\setminus\{l\})+T(\lambda^+)+2a.
\label{eqn|LBS3DSwitchL}
\end{eqnarray}
In a similar way we obtain 
\begin{eqnarray}
f(\sigma^{(a,b,a)}(\omega))&=&T(\mu^-)+ T(\mu^+\setminus\{m\})+2a.
\label{eqn|LBS3DSwitchM}
\end{eqnarray}
The passage times over $\lambda^-\cup \mu^+$ on the environments $\sigma^{(b,a,b)}(\omega)$ and $\sigma^{(b,b,a)}(\omega)$ are the same because the path $\lambda^-\cup \mu^+$ contains both $l$ and $m$. We will use $T_1$ to denote these two passage times, i.e. $T_1=T(\lambda^-\cup\mu^+,\sigma^{(b,a,b)}(\omega))$. 

The passage times over $\mu^-\cup \lambda^+$ are also the same on $\sigma^{(b,a,b)}(\omega)$ and $\sigma^{(b,b,a)}(\omega)$  because $\mu^-\cup \lambda^+$ contains neither $l$ nor $m$. We will use $T_2$ to denote these passage times.

The shortest passage times on $\sigma^{(b,a,b)}(\omega)$ and $\sigma^{(b,b,a)}(\omega)$ are bounded above by the number $\min\{T_1,T_2\}$. The numbers $T_1$ and $T_2$ satisfy 
\begin{eqnarray}T_1&=&T(\lambda^-\setminus\{l\})+T(\mu^+\setminus\{m\})+a+b,\label{eqn|LBS3DSwitchT1}\\
T_2&=&T(\mu^-)+T(\lambda^+). 
\label{eqn|LBS3DSwitchT2}
\end{eqnarray}

We now use $f(\sigma^{(b,a,b)}(\omega))\leq \min\{T_1,T_2\}$, $f(\sigma^{(b,b,a)}(\omega))\leq \min\{T_1,T_2\}$, \eqref{eqn|LBS3DSwitchObv0102}, \eqref{eqn|LBS3DSwitchL},  \eqref{eqn|LBS3DSwitchM}, \eqref{eqn|LBS3DSwitchT1}, and \eqref{eqn|LBS3DSwitchT2} to find a lower bound for $\partial_Sf(\omega)$. First, we use \eqref{eqn|LBS3DSwitchObv0102} to obtain 
\begin{eqnarray*}\partial_Sf(\omega)&\geq& -f(\sigma^{(b,a,b)}(\omega))-f(\sigma^{(b,b,a)}(\omega))+
f(\sigma^{(a,a,b)}(\omega))+f(\sigma^{(a,b,a)}(\omega)).\end{eqnarray*}
Next, we use $f(\sigma^{(b,a,b)}(\omega))\leq \min\{T_1,T_2\}$ and $f(\sigma^{(b,b,a)}(\omega))\leq \min\{T_1,T_2\}$ to further bound $\partial_Sf(\omega)$ as 
\begin{eqnarray*}\partial_Sf(\omega)&\geq& 
f(\sigma^{(a,a,b)}(\omega))+f(\sigma^{(a,b,a)}(\omega))-2\min\{T_1,T_2\}.\end{eqnarray*}
The equalities  \eqref{eqn|LBS3DSwitchL}, and \eqref{eqn|LBS3DSwitchM} further imply
\begin{eqnarray*}\partial_Sf(\omega)&\geq&
4a+T(\lambda^-\setminus\{l\})+ T(\lambda^+)+T(\mu^-)+T(\mu^+\setminus\{m\})-2 \min\{T_1,T_2\}.
\end{eqnarray*}
Finally, the last inequality, \eqref{eqn|LBS3DSwitchT1}, and \eqref{eqn|LBS3DSwitchT2} imply
\begin{eqnarray}\partial_Sf(\omega)&\geq & 3a-b +T_1+T_2-2\min\{T_1,T_2\}.\nonumber
\end{eqnarray}
It remains to observe that $T_1+T_2\geq 2\min\{T_1,T_2\}$, hence \eqref{eqn|BoundS3DirectionSwitch} is established.
\end{proof}

\subsection{Lower bounds}\label{subs|L3}
\begin{theorem}\label{thm|lowerBoundS3}
Let $S\subseteq W$ be a subset with three elements. The first passage percolation time $f$ satisfies the following inequality for every $\omega\in\Omega$
\begin{eqnarray}
\partial_Sf(\omega)&\geq& -(b-a). \label{eqn|lowerBoundS3}
\end{eqnarray}
\end{theorem}
\begin{proof} Let $S=\{k,l,m\}$. Due to Proposition \ref{thm|lowerBoundS3DSwitch} and $3a-b> -(b-a)$, we may assume that none of the edges is direction-switching with respect to the other two. 

We will first prove the following implication
\begin{eqnarray}
\sigma^{(a,a,b)}(\omega)\not\in E_k\cap E_l\cap \hat E_m^C&\Longrightarrow& \partial_Sf(\omega)\geq -(b-a)
\label{eqn|lowerBoundS3ImplicationMain}
\end{eqnarray}
The result \eqref{eqn|lowerBoundS3ImplicationMain} will follow from the following two
\begin{eqnarray}
\sigma^{(a,a,b)}(\omega)\in E_k^C &\Longrightarrow& \partial_Sf(\omega)\geq -(b-a),\label{eqn|lowerBoundS3ImplicationA}\\
\sigma^{(a,a,b)}(\omega)\in \hat E_m &\Longrightarrow& \partial_Sf(\omega)\geq -(b-a).\label{eqn|lowerBoundS3ImplicationB}
\end{eqnarray}
The first step in proving \eqref{eqn|lowerBoundS3ImplicationA} is to express the derivative $\partial_Sf(\omega)$ 
as 
\begin{eqnarray}
\partial_Sf(\omega)&=& \left(f(\sigma^{(b,b,b)}(\omega))-f(\sigma^{(b,b,a)}(\omega))\right) \label{eqn|lbS3ImpADer01}\\
&& +\left(f(\sigma^{(b,a,a)}(\omega))-
f(\sigma^{(a,a,a)}(\omega))\right) \label{eqn|lbS3ImpADer02}\\
&&-\left(f(\sigma^{(b,a,b)}(\omega))-f(\sigma^{(a,a,b)}(\omega))\right) \label{eqn|lbS3ImpADer03}\\
&&-\left(f(\sigma^{(a,b,b)}(\omega))-f(\sigma^{(a,b,a)}(\omega))
\right). \label{eqn|lbS3ImpADer04}
\end{eqnarray}
Assume that $\sigma^{(a,a,b)}(\omega)\in E_k^C$. Proposition \ref{thm|forComputer} (a) implies that the term \eqref{eqn|lbS3ImpADer03} 
is equal to $0$. The terms \eqref{eqn|lbS3ImpADer01} and \eqref{eqn|lbS3ImpADer02} are non-negative, and the negative term \eqref{eqn|lbS3ImpADer04} is bounded below
by $-(b-a)$, which proves \eqref{eqn|lowerBoundS3ImplicationA}.
In order to prove \eqref{eqn|lowerBoundS3ImplicationB}, we start by expressing $\partial_Sf(\omega)$ as 
\begin{eqnarray}
\partial_Sf(\omega)&=& \left(f(\sigma^{(b,b,b)}(\omega))-f(\sigma^{(b,b,a)}(\omega))\right) \label{eqn|lbS3ImpBDer01}\\
&& +\left(f(\sigma^{(a,a,b)}(\omega))-
f(\sigma^{(a,a,a)}(\omega))\right) \label{eqn|lbS3ImpBDer02}\\
&&-\left(f(\sigma^{(b,a,b)}(\omega))-f(\sigma^{(b,a,a)}(\omega))\right) \label{eqn|lbS3ImpBDer03}\\
&&-\left(f(\sigma^{(a,b,b)}(\omega))-f(\sigma^{(a,b,a)}(\omega))
\right). \label{eqn|lbS3ImpBDer04}
\end{eqnarray}
Assume that $\sigma^{(a,a,b)}(\omega)\in \hat E_m$. Proposition \ref{thm|forComputer} (b) implies that the term \eqref{eqn|lbS3ImpBDer02} is equal to $(b-a)$. The term 
\eqref{eqn|lbS3ImpBDer01} is non-negative. The terms \eqref{eqn|lbS3ImpBDer03} and \eqref{eqn|lbS3ImpBDer04} are negative but bounded below by $-(b-a)$. Therefore, $\partial_Sf(\omega)$ is bounded below by $(b-a)-2(b-a)=-(b-a)$. This completes the proof of \eqref{eqn|lowerBoundS3ImplicationB}. 

We proved \eqref{eqn|lowerBoundS3ImplicationMain} which states that the inequality $\partial_S f(\omega)\geq -(b-a)$ is satisfied unless 
$\sigma^{(a,a,b)}(\omega)$ is an element of $E_k\cap E_l\cap \hat E_m^C$. The analogous statements hold for $\sigma^{(a,b,a)}(\omega)$ and $\sigma^{(b,a,a)}(\omega)$. Hence, the required bound \eqref{eqn|lowerBoundS3} is proved unless all of the following three inclusions are satisfied
\begin{eqnarray}
\sigma^{(a,a,b)}(\omega)&\in& E_k\cap E_l\cap \hat E_m^C, \label{eqn|bS3Inclaab}\\
\sigma^{(a,b,a)}(\omega)&\in& E_k\cap \hat E_l^C\cap E_m,\quad\mbox{and} \label{eqn|bS3Inclaba}\\
\sigma^{(b,a,a)}(\omega)&\in& \hat E_k^C\cap E_l\cap E_m. \label{eqn|bS3Inclbaa}
\end{eqnarray}
Hence, it suffices to prove $\partial_Sf\geq -(b-a)$ under the conditions \eqref{eqn|bS3Inclaab}, \eqref{eqn|bS3Inclaba}, and \eqref{eqn|bS3Inclbaa}.
Due to $\sigma^{(a,a,b)}(\omega)\in \hat E_m^C$, there exists a geodesic on $\sigma^{(a,a,b)}(\omega)$ that does not contain the edge $m$. Let us fix one such geodesic and denote it by $\gamma_{kl}$. Since $\sigma^{(a,a,b)}(\omega)\in E_k\cap E_l$, the geodesic $\gamma_{kl}$ must contain both edges $k$ and $l$. We define the geodesics $\gamma_{lm}$ and $\gamma_{km}$ in analogous ways. Once the paths $\gamma_{kl}$, $\gamma_{lm}$, and $\gamma_{km}$ are fixed, we define the relation $\prec$ on $\{k,l,m\}$. We will write $k\prec l$ if on the path $\gamma_{kl}$ the edge $k$ appears before the edge $l$ when moving from the source to the sink. There are two cases: 
\begin{itemize}
\item Case 1: The relation $\prec$ does not have a minimum;
\item Case 2: The relation $\prec$ has a minimum.
\end{itemize}

\noindent {\bf Case 1.} This case is easier to consider. We may assume that $k\prec l$, $l\prec m$, and $m\prec k$. 

 \begin{figure}[t]
 \centering
\begin{tikzpicture}
\draw[thick] (1,3.5) -- (6,3.5) -- (4.6,2) -- (9,2);
\draw[thick,dashed] (0,2) -- (5,2) -- (5.6,0.5) -- (10,0.5); 
\draw[thick,dotted] (1,0.5) -- (6,0.5) -- (5.6,3.5) -- (10,3.5); 
\node at (5.6,3.5) {$\bullet$};
\node at (6,3.5) {$\bullet$};
\node[above] at (5.8,3.5) {$m$}; 
\node at (6,0.5) {$\bullet$};
\node at (5.6,0.5) {$\bullet$};
\node[below] at (5.8,0.5) {$l$}; 
\node at (4.6,2) {$\bullet$};
\node at (5,2) {$\bullet$};
\node[below] at (4.8,2) {$k$}; 
\node[above] at (3,3.5) {$L_m$};
\node[above] at (8,3.5) {$R_m$};
\node[above] at (3,2) {$L_k$};
\node[above] at (8,2) {$R_k$};
\node[below] at (3,0.5) {$L_l$};
\node[below] at (8,0.5) {$R_l$};
\node[above] at (5.0,2.60) {$e_{km}$};
\node[below] at (5.1,1.30) {$e_{kl}$};
\node[right] at (5.8,1.5) {$e_{lm}$};
\end{tikzpicture}
\caption{Case in which $\prec$ has no minimum.}
\label{fig|LowerBound3NoMin}
\end{figure}

As we mentioned earlier, the Proposition \ref{thm|lowerBoundS3DSwitch} allowed us to assume that the directions of the flow over the edges $k$, $l$, and $m$ is the same on the environments 
$\sigma^{(a,a,b)}(\omega)$, $\sigma^{(a,b,a)}(\omega)$, and $\sigma^{(b,a,a)}(\omega)$.  

Let us denote by $e_{kl}$ the total passage time between the edges $k$ and $l$ on the geodesic $\gamma_{kl}$. We define $e_{km}$ and $e_{lm}$ in analogous way. 
Let us denote by $L_k$ the total passage time on the geodesic $\gamma_{kl}$ before the edge $k$. Let $R_l$ be the total passage time on the geodesic $\gamma_{kl}$ after the edge $l$. 
The numbers $L_m$, $L_l$, $R_k$, and $R_m$ are defined in similar ways. Let us emphasize that none of the previously defined passage times includes the edges $k$, $m$, and $l$. 

Since the paths $\gamma_{kl}$, $\gamma_{lm}$, and $\gamma_{km}$ are fixed, the quantities $e_{\cdot\cdot}$, $R_{\cdot}$, and $L_{\cdot}$ that we defined 
above are the same on the environments $\sigma^{\overrightarrow{\theta}}(\omega)$ for all eight choices $\overrightarrow \theta \in\{a,b\}^3$.
We will now prove the following identities
\begin{eqnarray}
f(\sigma^{(a,a,b)}(\omega))&=&L_k+e_{kl}+R_l+2a,\label{eqn|lbS3Case101}\\
f(\sigma^{(a,b,a)}(\omega))&=&L_m+e_{km}+R_k+2a,\label{eqn|lbS3Case102}\\
f(\sigma^{(b,a,a)}(\omega))&\geq& f(\sigma^{(a,a,a)}(\omega)),\label{eqn|lbS3Case103}\\
f(\sigma^{(b,b,b)}(\omega))&\geq& f(\sigma^{(b,b,a)}(\omega)),\label{eqn|lbS3Case104}\\
f(\sigma^{(b,a,b)}(\omega))&\leq& L_l+a+R_l,\label{eqn|lbS3Case105}\\
f(\sigma^{(a,b,b)}(\omega))&\leq& L_k+a+R_k.\label{eqn|lbS3Case106}
\end{eqnarray}
The equalities \eqref{eqn|lbS3Case101} and \eqref{eqn|lbS3Case102} are due to the definitions of $\gamma_{kl}$ and $\gamma_{km}$. The inequalities \eqref{eqn|lbS3Case103} and 
\eqref{eqn|lbS3Case104} are the consequences of monotonicity.

Let us prove \eqref{eqn|lbS3Case105}. On the environment $\sigma^{(b,a,b)}(\omega)$, we
will construct a path $\delta$ such that 
\begin{eqnarray*}T(\delta, \sigma^{(b,a,b)}(\omega)) &=& L_l+a+R_l.
\end{eqnarray*}
Let us identify the section of the path $\gamma_{lm}$ before the edge $l$ and call it $\delta_1$. 
It has the passage time $L_l$. Let us consider the section of the path $\gamma_{kl}$ after the edge $l$. 
This section will be called $\delta_2$. Its passage time is $R_l$. 
Now we define $\delta=\delta_1\cup\{l\}\cup \delta_2$. This path $\delta$ connects the source and the sink. Its passage time is $L_l+a+R_l$, hence \eqref{eqn|lbS3Case105} must hold. 

The inequality \eqref{eqn|lbS3Case106} is proved in a similar way.

From \eqref{eqn|lbS3Case101}--\eqref{eqn|lbS3Case106} we obtain 
\begin{eqnarray}
\partial_Sf(\omega)&\geq& (L_k+e_{kl}+R_l+2a) + (L_m+e_{km}+R_k+2a) \nonumber\\
&&-(L_l+a+R_l)-(L_k+a+R_k) \nonumber \\
&=& e_{kl}   + L_m+e_{km}+ 2a 
-L_l. \label{eqn|lbS3Case107}
\end{eqnarray}
Let us consider the geodesic $\gamma_{lm}$ on the environment $\sigma^{(b,a,a)}(\omega)$. Let us consider $\hat \xi$ 
that consists of the union of the left part of $\gamma_{km}$ and the right part of $\gamma_{lm}$. 
The path $\xi=\hat\xi\cup\{m\}$ is a path between the source and the sink whose passage 
time is $L_m+a+R_m$.  Hence, we found a path between the source and the 
sink whose passage time is equal to $L_m+a+R_m$. Since $\gamma_{lm}$ is a geodesic with passage time 
$L_l+e_{lm}+R_m+2a$, we obtain
$L_m+a+R_m \geq L_l+e_{lm}+R_m+2a$. The last inequality is equivalent to $L_m-L_l\geq e_{lm}+a$. The inequality \eqref{eqn|lbS3Case107} turns into 
$\partial_Sf(\omega)\geq e_{kl}+e_{km}+e_{lm}+3a\geq 3a>-(b-a)$. We have completed the proof in Case 1.

\noindent{\bf Case 2.} We may assume that $k$ is the minimum, i.e. $k\prec l$ and $k\prec m$. Without loss of generality, we may assume that $l\prec m$. 

 \begin{figure}[t]
 \centering
\begin{tikzpicture}
\draw[thick,dotted] (0.5,2.5) -- (3,2.5) -- (7,2.5) -- (9.5,2.5);
\draw[thick,dashed] (0.5,0.5) -- (5.2,0.5) -- (7,2.5) -- (7.4,2.5) -- (7.5,2.4) -- (9.5,2.4);
\draw[thick] (0.5,2.4) -- (2.5,2.4) --(2.6,2.5) --(3,2.5) -- (4.8,0.5) -- (9.5,0.5);
\draw[thick] (7.0,2.5) -- (7.4,2.5);
\draw[thick] (4.8,0.5) -- (5.2,0.5);
\node at (3,2.5) {$\bullet$};
\node at (2.6,2.5) {$\bullet$};
\node[above] at (2.8,2.5) {$k$};
\node at (7,2.5) {$\bullet$};
\node at (7.4,2.5) {$\bullet$};
\node[above] at (7.2,2.5) {$m$};
\node at (4.8,0.5) {$\bullet$};
\node at (5.2,0.5) {$\bullet$};
\node[below] at (5,0.5) {$l$};
\node[above] at (1.5,2.5) {$L_k$};
\node[above] at (8.5,2.5) {$R_m$};
\node[above] at (5,2.5) {$e_{km}$};
\node[below] at (3.8,1.5) {$e_{kl}$};
\node[below] at (6.2,1.5) {$e_{lm}$};
\node[below] at (2.5,0.5) {$L_l$};
\node[below] at (7.5,0.5) {$R_l$};
\end{tikzpicture}
\caption{Case in which $k\prec l\prec m$.}
\label{fig|lowerBound3TriangleNoAlLRA}
\end{figure}

 Figure \ref{fig|lowerBound3TriangleNoAlLRA} corresponds to the situation in which $k\prec l\prec m$. 
 
The sections of the paths $\gamma_{kl}$ and $\gamma_{km}$ before the edge $k$ must have equal passage times. Let us denote by $L_k$ the common passage time of these sections. We may modify one of the paths $\gamma_{kl}$ and $\gamma_{km}$ in such a way that the sections before $k$ actually coincide. In a similar way, the passage times after the edge $m$ on the paths $\gamma_{km}$ and $\gamma_{lm}$ are equal. We will denote these passage times by $R_m$. We define $L_l$ as the passage time over the path $\gamma_{lm}$ before the edge $l$; $R_l$ the passage time over $\gamma_{kl}$ after $l$. We define $e_{kl}$, $e_{lm}$, and $e_{km}$ as passage times over the open intervals $(k,l)$, $(l,m)$, and $(k,m)$ on the paths 
$\gamma_{kl}$, $\gamma_{lm}$, and $\gamma_{km}$, respectively. Let us define the real number $\theta$ in such a way that $e_{km} =e_{kl}+e_{lm}+a+\theta$, i.e.
\begin{eqnarray} \theta &=&
e_{km} -e_{kl}-e_{lm}-a
 .\label{eqn|lbS3Case1Theta}\end{eqnarray} 
We don't know whether $\theta$ is positive or negative. However, we will
be able to prove that $\theta$ satisfies the inequality 
\begin{eqnarray}\theta\leq b-a,\label{eqn|InequalityTheta}\end{eqnarray}
which is sufficient for our needs. 
On the environment $\sigma^{(a,b,a)}(\omega)$, the minimal passage time is over the path $\gamma_{km}$. This passage time is $L_k+e_{km}+R_m+2a$. 
Consider the path in which the segment $(k,m)$ is replaced with $(k,l)\cup \{l\}\cup (l,m)$ (i.e., the section of $\gamma_{kl}$ between $k$ and $l$, the edge $l$, and 
the section of $\gamma_{lm}$ between $l$ and $m$). The passage time of this path is 
$L_k+e_{kl}+e_{lm}+R_m+2a+b$. The latter number is larger than the former, hence 
\[L_k+e_{kl}+e_{lm}+R_m+2a+b>L_k+e_{km}+R_m+2a,\]
which, together with \eqref{eqn|lbS3Case1Theta}, implies \eqref{eqn|InequalityTheta}.

Let us define the real numbers $\theta_L$ and $\theta_R$ with the following two identities
\begin{eqnarray}
L_l&=&L_k+e_{kl}+a+\theta_L,\label{eqn|lbS3Case1ThetaL}\\
R_l&=&R_m+e_{lm}+a+\theta_R.\label{eqn|lbS3Case1ThetaR}
\end{eqnarray}
The numbers $\theta_L$ and $\theta_R$ must belong to the interval $(0,b-a]$. Let us prove that $\theta_L\in(0,b-a]$. We need to prove $\theta_L>0$ and $\theta_L\leq b-a$. 

In order to prove
$\theta_L>0$, we start by observing that on $\sigma^{(a,a,b)}(\omega)$, every geodesic must go through $k$. The path $\gamma_{kl}$ is a geodesic. 
Let us take the section of this geodesic before the edge $l$ and replace it with the corresponding section of $\gamma_{lm}$.
We obtain a path that is not a geodesic because it omits $k$, while \eqref{eqn|bS3Inclaab} claims that every geodesic on $\sigma^{(a,a,b)}(\omega)$ must contain $k$. The change of passage time must satisfy \begin{eqnarray}0&<&L_l-L_k-a-e_{kl}= \theta_L. \label{eqn|equivalentIneqThetaL}\end{eqnarray}

Let us now prove that $\theta_L\leq b-a$. Consider the environment $\sigma^{(b,a,a)}(\omega)$. The path $\gamma_{lm}$ is a geodesic. Let us denote by $\gamma_{kl}^-$ the section of $\gamma_{kl}$ before the edge $l$. Let $\gamma_{lm}^+$ be the section of $\gamma_{lm}$ after the edge $l$. The path $\gamma_{kl}^-\cup\{l\}\cup \gamma_{lm}^+$ is a path from the source to the sink that has passage time $L_k+e_{kl}+e_{lm}+R_m+2a+b$. 
The passage time over the geodesic $\gamma_{lm}$ is 
$L_l+e_{lm}+R_m+2a$. From 
\begin{eqnarray*}L_l+e_{lm}+R_m+2a&\leq &L_k+e_{kl}+e_{lm}+R_m+2a+b,
\end{eqnarray*}
we obtain  $L_l  \leq L_k+e_{kl} +  b$. The last inequality and  \eqref{eqn|lbS3Case1ThetaL} imply $\theta_L\leq (b-a)$.
 
In an analogous way we prove that $\theta_R\in (0,b-a]$. 

We can now update our picture. Figure \ref{fig|lowerBound3TriangleNoAlLRAUpdated} shows the geodesics in which the labels $L_l$, $e_{km}$, and $R_L$ are replaced with their equivalent values involving $\theta$, $\theta_L$, and $\theta_R$.

 \begin{figure}[t]
 \centering
\begin{tikzpicture}
\draw[thick,dotted] (0.5,2.5) -- (3,2.5) -- (7,2.5) -- (9.5,2.5);
\draw[thick,dashed] (0.5,0.5) -- (5.2,0.5) -- (7,2.5) -- (7.4,2.5) -- (7.5,2.4) -- (9.5,2.4);
\draw[thick] (0.5,2.4) -- (2.5,2.4) --(2.6,2.5) --(3,2.5) -- (4.8,0.5) -- (9.5,0.5);
\draw[thick] (7.0,2.5) -- (7.4,2.5);
\draw[thick] (4.8,0.5) -- (5.2,0.5);
\node at (3,2.5) {$\bullet$};
\node at (2.6,2.5) {$\bullet$};
\node[above] at (2.8,2.5) {$k$};
\node at (7,2.5) {$\bullet$};
\node at (7.4,2.5) {$\bullet$};
\node[above] at (7.2,2.5) {$m$};
\node at (4.8,0.5) {$\bullet$};
\node at (5.2,0.5) {$\bullet$};
\node[below] at (5,0.5) {$l$};
\node[above] at (1.5,2.5) {$L_k$};
\node[above] at (8.5,2.5) {$R_m$};
\node[above] at (5,2.5) {$e_{kl}+e_{lm}+a+\theta$};
\node[below] at (3.8,1.5) {$e_{kl}$};
\node[below] at (6.2,1.5) {$e_{lm}$};
\node[below] at (2.5,0.5) {$L_k+e_{kl}+a+\theta_L$};
\node[below] at (7.5,0.5) {$R_m+e_{lm}+a+\theta_R$};
\end{tikzpicture}
\caption{The labels $L_l$, $e_{km}$, and $R_L$ are replaced with their equivalent values involving $\theta$, $\theta_L$, and $\theta_R$.}
\label{fig|lowerBound3TriangleNoAlLRAUpdated}
\end{figure}

 Since $\gamma_{kl}$, $\gamma_{lm}$, and $\gamma_{km}$ are geodesics on $\sigma^{(a,a,b)}(\omega)$, $\sigma^{(b,a,a)}(\omega)$, and $\sigma^{(a,b,a)}(\omega)$, respectively, we obtain 
\begin{eqnarray}
f(\sigma^{(a,a,b)}(\omega))&=&L_k+R_m+e_{kl}+e_{lm}+3a+\theta_R,\label{eqn|lbS3f01}\\
f(\sigma^{(a,b,a)}(\omega))&=&L_k+R_m+e_{kl}+e_{lm}+3a+\theta,\label{eqn|lbS3f02}\\
f(\sigma^{(b,a,a)}(\omega))&=&L_k+R_m+e_{kl}+e_{lm}+3a+\theta_L.\label{eqn|lbS3f03}
\end{eqnarray}
Due to monotonicity we have 
\begin{eqnarray}
f(\sigma^{(b,b,b)}(\omega))&\geq& f(\sigma^{(a,b,b)}(\omega)). \label{eqn|lbS3f04}
\end{eqnarray}
Let us consider the environment $\sigma^{(b,a,b)}(\omega)$. We will construct a path $\zeta$ that has a low passage time and that will provide a useful upper bound for $f(\sigma^{(b,a,b)}(\omega))$.   
Let $\gamma_{lm}^-$ be the section of $\gamma_{lm}$ before $l$. Let $\gamma_{kl}^+$ be the section of $\gamma_{kl}$ after $l$. 
Define $\zeta=\gamma_{lm}^-\cup \{l\}\cup \gamma_{kl}^+$. The passage time over $\zeta$ is equal to $L_k+$ $e_{kl}+$ $\theta_L+$ $R_m+$ $e_{lm}+$ $\theta_R+$ $3a$. 
The passage time over $\zeta$ is greater than or equal to the minimal passage time, hence
\begin{eqnarray}
f(\sigma^{(b,a,b)}(\omega))&\leq&L_k+R_m+e_{kl}+e_{lm}+3a+\theta_L+\theta_R. \label{eqn|lbS3f05}
\end{eqnarray}
The minimal passage time $f(\sigma^{(b,b,a)}(\omega))$ is smaller than or equal to the minimum 
of the passage times over the paths $\gamma_{lm}$ and $\gamma_{km}$, hence
\begin{eqnarray}
f(\sigma^{(b,b,a)}(\omega))&\leq& L_k+R_m+e_{kl} +e_{lm}+2a+b+\min\{\theta_L,\theta \}.\label{eqn|lbS3f06} 
\end{eqnarray}
Finally, let us consider the environment $\sigma^{(a,a,a)}(\omega)$. By considering the passage time over the path $\gamma_{km}$, we obtain 
\begin{eqnarray}
f(\sigma^{(a,a,a)}(\omega))&\leq& L_k+R_m+e_{kl}+e_{lm}+3a+\theta.\label{eqn|lbS3f0701}
\end{eqnarray}
Let us construct an alternative path $\gamma'$ on the environment $\sigma^{(a,a,a)}(\omega)$. We replace the section $(k,m)$ with the sections $(k,l)$ and $(l,m)$ of the paths $\gamma_{kl}$ and $\gamma_{lm}$. We must also add the edge $l$ in order for $\gamma'$ to connect source to the sink. The passage time over $\gamma'$ is $L_k+R_m+e_{kl}+e_{lm}+3a$, hence 
\begin{eqnarray}
f(\sigma^{(a,a,a)}(\omega))&\leq& L_k+R_m+e_{kl}+e_{lm}+3a.\label{eqn|lbS3f0702}
\end{eqnarray}
The inequalities \eqref{eqn|lbS3f0701} and \eqref{eqn|lbS3f0702} imply 
\begin{eqnarray}
f(\sigma^{(a,a,a)}(\omega))&\leq& L_k+R_m+e_{kl}+e_{lm}+3a+\min\{\theta,0\}. \label{eqn|lbS3f07}
\end{eqnarray}
We now use \eqref{eqn|lbS3f01}--\eqref{eqn|lbS3f06} and \eqref{eqn|lbS3f07} to find the lower bound on $\partial_Sf(\omega)$. Observe that $L_k+R_m+e_{kl}+e_{lm}$ appears equally many times with sign $+$ as with sign $-$. We can ignore these terms as they cancel out in the evaluation of $\partial_Sf(\omega)$. Hence, 
\begin{eqnarray}
\partial_Sf(\omega)&\geq&   a    -b+\theta -\min\{\theta_L,\theta\} -\min\{\theta,0\}. \label{eqn|lbS3AlmostDone}
\end{eqnarray}
Define
$F(\theta,\theta_L)=\theta- \min\{\theta_L,\theta\}-\min\{\theta,0\}$. It suffices to prove that $F(\theta,\theta_L)\geq 0$. 
There are two cases: $\theta\geq 0$ and $\theta<0$. If $\theta\geq 0$, then $\min\{\theta,0\}=0$ and $F(\theta,\theta_L)=\theta-\min\{\theta_L,\theta\}\geq 0$. 
If $\theta <0$, then from $\theta_L>0$ we have $\min\{\theta,\theta_L\}=\theta$, and  
$F(\theta,\theta_L)=\theta-\theta-\theta=-\theta>0$. This completes the proof of Case 2, which was the only remaining case that we needed to consider.
\end{proof}

\begin{theorem} The first four elements of the sequence $\left(\mathcal U_n\right)$ and the first three elements of the sequence $\left(\mathcal L_n\right)$ are 
\begin{eqnarray}
\left(\mathcal U_1,\mathcal U_2, \mathcal U_3, \mathcal U_4\right)&=&(1,1,1,2);
\label{eqn|UFirstElements}\\
\left(\mathcal L_1,\mathcal L_2, \mathcal L_3,\mathcal L_4\right)&=&(0,-1,-1,-2).
\label{eqn|LFirstElements}
\end{eqnarray} \end{theorem}
\begin{proof} The inequalities $\mathcal U_1\leq 1$ and $\mathcal L_1\geq 0$ follow from \eqref{eqn|firstDerivativeBound}. The bound $\mathcal U_1\geq 1$ 
can be proved by constructing an environment $\omega$ for which there is an edge $k$ such that $\partial_kf(\omega)=b-a$. This is easy to do: Let $\omega^b$ be the environment that assigns $b$ to every edge. The equality $\partial_jf(\omega^b)=b-a$ holds for every edge $j$ on the shortest path between the source and the sink. The bound $\mathcal L_1\leq 0$ 
is equally easy to prove -- the environment $\omega^a$ that assigns $a$ to every edge satisfies $\partial_kf(\omega^a)=0$ for every $k$ that is not on the shortest path.  

The inequalities $\mathcal U_2\leq 1$ and $\mathcal L_2\geq -1$ follow from \eqref{eqn|secondDerivativeBound}. Proposition \ref{thm|badSets} implies $\mathcal L_2\leq -1$ and $\mathcal U_2\geq 1$. 

In addition, Proposition \ref{thm|badSets} also implies $\mathcal U_3\geq 1$, $\mathcal L_3\leq -1$,  $\mathcal U_4\geq 2$, $\mathcal L_4\leq -2$. 
Theorem \ref{thm|upperBoundS3} implies $\mathcal U_3\leq 1$ and Theorem \ref{thm|lowerBoundS3} implies $\mathcal L_3\geq -1$. Therefore, 
$\mathcal U_3=1$ and $\mathcal L_3=-1$. 
The first inequality in \eqref{eqn|gBoundFibonacci} implies that $
\mathcal U_4\leq \mathcal U_3-\mathcal L_3=2$.  The second inequality in \eqref{eqn|gBoundFibonacci} implies that $
\mathcal L_4\geq \mathcal L_3-\mathcal U_3=-2$. 
\end{proof}


 
\section{Fourier-level contributions to the variance} \label{sec|HigherOrderInfluential}


\subsection{Influential and essential sets}\label{subs|DefinitionsHigherOrderInfluentialEssential}
The definitions of essential and influential edges can be generalized to essential and influential sets. These notions were already introduced in \cite{FirstPaper2025}. We briefly recall the definitions and their main properties, since we will need them in greater detail here. 

On an environment $\omega$, a set of edges $S$ is called \emph{essential} if every geodesic in $\omega$ passes through all elements of $S$. We denote this event by $E_S$. As noted in \cite{FirstPaper2025}, we have  
\[
E_S \;=\; \bigcap_{i\in S} E_i .
\]

We say that, on the environment $\omega$, the set of edges $S$ is \emph{influential} if $\partial_S f(\omega)\neq 0$. The event that $S$ is influential will be denoted by $A_S$. As shown in \cite{FirstPaper2025}, in general we do not have $E_S \subseteq A_S$, even for two-element subsets $S$. Moreover, $A_{\{i,j\}} \not\subseteq A_i \cup A_j$.

Compared to essential sets, influential sets are more complex and less elegant generalizations of their one-dimensional counterparts. Positive and negative values of $\partial_S f$ correspond to vastly different configurations of geodesics, so we will study them separately. We decompose $A_S$ as a union of $A_S^+$ and $A_S^-$, defined by
\[
A_S^+ \;=\; \{\omega : \partial_S f(\omega) > 0\}, 
\qquad\text{and}\qquad
A_S^- \;=\; \{\omega : \partial_S f(\omega) < 0\}.
\]

\begin{proposition}\label{thm|invarianceLevelSetsGen}
For every set of edges $S$, every edge $i\in S$, and every $\xi\in\{a,b\}$, 
\begin{eqnarray}
\left(\sigma_i^{\xi}\right)^{-1}(A_S^+)=A_S^+ 
,\quad 
\left(\sigma_i^{\xi}\right)^{-1}(A_S^-)=A_S^-,\quad\mbox{and} \quad
\left(\sigma_i^{\xi}\right)^{-1}(A_S)=A_S. &&
\label{eqn|invarianceLevelSetsGen}
\end{eqnarray}
\end{proposition}
\begin{proof}
Let us first provide a generalization of \eqref{eqn|propertyC}. Take any function $\varphi$ and apply \eqref{eqn|propertyC} to $\partial_{S\setminus\{i\}}\varphi$:  
\begin{eqnarray}
\left(\partial_S\varphi\right)\circ \sigma_i^{\xi}&=&\left(\partial_i\left(\partial_{S\setminus\{i\}}\varphi\right) \right)\circ
\sigma_i^{\xi}= \partial_i\left(\partial_{S\setminus\{i\}}\varphi\right) =\partial_S\varphi. \label{eqn|propertyCGen}
\end{eqnarray}
Since $A_S^+=\left(\partial_Sf\right)^{-1}(\mathbb R_+)$, we have 
\begin{eqnarray*}
\left(\sigma_i^{\xi}\right)^{-1}(A_S^+)&=&
\left(\sigma_i^{\xi}\right)^{-1}\left(\left(\partial_Sf\right)^{-1}(\mathbb R_+)\right)= \left(\left(\partial_Sf\right)\circ \sigma_i^{\xi}\right)^{-1}(\mathbb R_+)\\ &=&
\left( \partial_Sf \right)^{-1}(\mathbb R_+)=A_S^+.
\end{eqnarray*}
This completes the proof of the first equality in \eqref{eqn|invarianceLevelSetsGen}. The remaining two are analogous.
\end{proof}



\subsection{Influential sets with two elements}
We will work under the assumption that $a/(b-a)\in\mathbb N$. By 
Proposition~7 
in \cite{FirstPaper2025}, this condition is equivalent 
to the non-existence of integers $k_a$ and $k_b$ such that 
$ak_a+bk_b\in(0,b-a)$. In other words, no integer linear combination 
of $a$ and $b$ can land strictly between $0$ and $b-a$. 
Proposition~8 
 from \cite{FirstPaper2025} implies that for every $j$ we have \[E_j\subseteq A_j=\hat A_j\subseteq \hat E_j.\]

We will denote by $\overrightarrow\alpha$ the array of length $|S|$ whose all elements are equal to $a$.

\begin{proposition}\label{thm|NonZeroDerivativeImpliesGeodesic}
If $S$ is an influential set on the environment $\omega$, then every geodesic on $\sigma_S^{\overrightarrow \alpha}(\omega)$ contains at least one element of $S$.
\end{proposition}
\begin{proof} Assume the contrary, that there is a geodesic $\gamma$ that omits all elements of $S$. Then, for each $\overrightarrow \xi\in\{a,b\}^{|S|}$ we have
\begin{eqnarray*}
f\left(\sigma^{\overrightarrow{\xi}}_S(\omega)\right)&\leq& T\left(\gamma,\sigma^{\overrightarrow{\xi}}_S(\omega)\right)
=
T\left(\gamma,\sigma^{\overrightarrow{\alpha}}_S(\omega)\right)\\
&=&f\left(\sigma^{\overrightarrow{\alpha}}_S(\omega)\right),
\end{eqnarray*}
which implies that $f\left(\sigma^{\overrightarrow{\xi}}_S(\omega)\right)$ and $f\left(\sigma^{\overrightarrow{\alpha}}_S(\omega)\right)$ are equal. Therefore, $\partial_Sf(\omega)$ must be equal to $0$, which is a contradiction.
\end{proof}

\begin{proposition}\label{thm|Necessary2APlus} 
Assume that $a$ is an integer multiple of $b-a$. If $S=\{i,j\}$ and if $\omega\in A_S^+$, then every geodesic on $\sigma_i^a\sigma_j^a(\omega)$ contains at least one element of $\{i,j\}$ and there exist two geodesics $\gamma_i$ and $\gamma_j$ on $\sigma_i^a\sigma_j^a(\omega)$ such that $i\in \gamma_i\setminus \gamma_j$ and $j\in \gamma_j\setminus\gamma_i$.
\end{proposition}
\begin{proof} We will first prove that $\omega\in A_S^+$ implies the following identities 
\begin{eqnarray}
f\left(\sigma_i^a\sigma_j^a(\omega)\right)= f\left(\sigma_i^a\sigma_j^b(\omega)\right) = f\left(\sigma_i^b\sigma_j^a(\omega)\right) = f\left(\sigma_i^b\sigma_j^b(\omega)\right)-(b-a). \label{eqn|Necessary2APlus}
\end{eqnarray}
Assume that the first equality in \eqref{eqn|Necessary2APlus} is violated. Then, we have $\sigma_i^a(\omega)\in A_j$. Since $A_j=\hat A_j$, we have $\partial_jf\left(\sigma_i^a(\omega)\right)=b-a$ and 
\begin{eqnarray*} \partial_Sf(\omega)&=&\partial_jf\left(\sigma_i^b(\omega)\right)- \partial_jf\left(\sigma_i^a(\omega)\right) =\partial_jf\left(\sigma_i^b(\omega)\right)- (b-a)\leq 0,
\end{eqnarray*} which contradicts the assumption $\omega\in A_S^+$.
In an analogous way we prove the equality between the first and the third quantity in \eqref{eqn|Necessary2APlus} holds. Consequently, $\partial_jf\left(\sigma_i^a(\omega)\right)=0$ and 
\begin{eqnarray}\partial_Sf(\omega)=\partial_jf\left(\sigma_i^b(\omega)\right)-\partial_jf\left(\sigma_i^a(\omega)\right)=
\partial_jf\left(\sigma_i^b(\omega)\right). \label{eqn|Necessary2APlus02}
\end{eqnarray}
Since the last number is a derivative of first order, it belongs to $\{0,b-a\}$. Now, $\partial_Sf(\omega)>0$ and $\partial_Sf(\omega)\in\{0,b-a\}$ together imply $\partial_Sf(\omega)=b-a$. This together with \eqref{eqn|Necessary2APlus02} implies the last equality in \eqref{eqn|Necessary2APlus}.

Since \eqref{eqn|Necessary2APlus} is satisfied, we conclude that $\sigma_i^a\sigma_j^a(\omega)\in A_j^C\subseteq E_j^C$. Therefore, there exists a geodesic $\gamma_i$ on $\sigma_i^a\sigma_j^a(\omega)$ that omits $j$. The geodesic $\gamma_i$ must pass through at least one of the edges $i$ and $j$, due to Proposition \ref{thm|NonZeroDerivativeImpliesGeodesic}. Hence, $\gamma_i$ must pass through $i$. In an analogous way we prove that there is a geodesic $\gamma_j$ on $\sigma_i^a\sigma_j^a(\omega)$ that passes through $j$ and omits $i$.
\end{proof}

\begin{proposition}\label{thm|Sufficient2APlus} 
Let $S=\{i,j\}$. If $\omega$ is an environment such that every geodesic on $\sigma_i^a\sigma_j^a(\omega)$ contains at least one element of $\{i,j\}$ and if there exist two geodesics $\gamma_i$ and $\gamma_j$ on $\sigma_i^a\sigma_j^a(\omega)$ such that $i\in \gamma_i\setminus \gamma_j$ and $j\in \gamma_j\setminus\gamma_i$, 
then $\omega\in A_{S}^+$.
\end{proposition}
\begin{proof} Assume the contrary, that $\partial_Sf(\omega)\leq 0$. Since $j\not\in \gamma_i$, and $\gamma_i$ is a geodesic on $\sigma_i^a\sigma_j^a(\omega)$, we have
\begin{eqnarray*}
f\left(\sigma_i^a\sigma_j^b(\omega)\right)&\leq& T\left(\gamma_i,\sigma_i^a\sigma_j^b(\omega)\right) =
 T\left(\gamma_i,\sigma_i^a\sigma_j^a(\omega)\right) =f\left(\sigma_i^a\sigma_j^a(\omega)\right),
\end{eqnarray*}
hence $f\left(\sigma_i^a\sigma_j^b(\omega)\right)=f\left(\sigma_i^a\sigma_j^a(\omega)\right)$. 
In an analogous way we obtain $f\left(\sigma_i^b\sigma_j^a(\omega)\right)=f\left(\sigma_i^a\sigma_j^a(\omega)\right)$.
Let $\zeta$ 
be a geodesic on
$\sigma_i^b\sigma_j^b(\omega)$. 
\begin{eqnarray}
f\left(\sigma_i^b\sigma_j^b(\omega)\right)=T\left(\zeta,\sigma_i^b\sigma_j^b(\omega)\right) \geq
T\left(\zeta,\sigma_i^a\sigma_j^a(\omega)\right) \geq   f\left(\sigma_i^a\sigma_j^a(\omega)\right).
\label{eqn|Sufficient2APlus}
\end{eqnarray}
Since we assumed that $\partial_Sf(\omega)\leq 0$, the two inequalities $\geq$ in \eqref{eqn|Sufficient2APlus} must be equalities. Consequently, $\zeta$ is a geodesic on $\sigma_i^a\sigma_j^a(\omega)$. In addition,  
\[T\left(\zeta,\sigma_i^b\sigma_j^b(\omega)\right) =
T\left(\zeta,\sigma_i^a\sigma_j^a(\omega)\right)\] 
implies that $\zeta$ does not contain any of the elements of $\{i,j\}$. This contradicts our assumption that every geodesic on $\sigma_i^a\sigma_j^a(\omega)$ contains at least one element of $\{i,j\}$. 
\end{proof}

\begin{proposition}\label{thm|Necessary2AMinus} 
Assume that $a$ is an integer multiple of $b-a$. If $S=\{i,j\}$ and if $\omega\in A_S^-$, then every geodesic on
$\sigma_i^a\sigma_j^a(\omega)$
contains both $i$ and $j$ and there is a path $\bar\gamma$ from source to sink that contains neither $i$ nor $j$ 
such that \begin{eqnarray}
T\left(\bar\gamma,\omega\right)&=&f\left(\sigma_i^a\sigma_j^a(\omega)\right)+(b-a). 
\label{eqn|AMinusBarGamma}
\end{eqnarray}
\end{proposition}
\begin{proof} We will first prove that $\omega\in A_S^-$ implies 
\begin{eqnarray}f\left(\sigma_i^a\sigma_j^a(\omega)\right)+(b-a)= f\left(\sigma_i^a\sigma_j^b(\omega)\right) = f\left(\sigma_i^b\sigma_j^a(\omega)\right) = f\left(\sigma_i^b\sigma_j^b(\omega)\right). \label{eqn|Necessary2AMinus}
\end{eqnarray}
Assume that the last two quantities in \eqref{eqn|Necessary2AMinus} are different. Then, 
$\partial_jf\left(\sigma_i^b(\omega)\right)>0$ and $A_j=\hat A_j$ implies $\partial_jf\left(\sigma_i^b(\omega)\right)=b-a$. We now have 
\begin{eqnarray*}\partial_Sf(\omega)=\partial_jf\left(\sigma_i^b(\omega)\right)-\partial_jf\left(\sigma_i^a(\omega)\right)
=(b-a)-\partial_jf\left(\sigma_i^a(\omega)\right)\geq 0, \end{eqnarray*}
which contradicts our assumption $\omega\in A_S^-$. In the same way we prove the equality between the second and the fourth quantity in \eqref{eqn|Necessary2AMinus}. 
If the number $f\left(\sigma_i^a\sigma_j^a(\omega)\right)$ is equal to all of the last three quantities in \eqref{eqn|Necessary2AMinus}, then $\partial_Sf(\omega)$ would be $0$. Therefore,
$f\left(\sigma_i^a\sigma_j^a(\omega)\right)\neq f\left(\sigma_i^b\sigma_j^a(\omega)\right)$ and $\partial_if\left(\sigma_j^a(\omega)\right)\neq 0$. Hence, $\partial_if\left(\sigma_j^a(\omega)\right)=b-a$, which completes the proof of \eqref{eqn|Necessary2AMinus}.
Two consequences of \eqref{eqn|Necessary2AMinus} are $\sigma_i^a(\omega)\in A_j$ and $\sigma_j^a(\omega)\in A_i$.
Assume that there is a geodesic on $\sigma_i^a\sigma_j^a(\omega)$ that omits $i$. Then, 
$\sigma_j^a(\omega)\in E_i^C\subseteq A_i^C$, a contradiction. 

Therefore, every geodesic on $\sigma_i^a\sigma_j^a(\omega)$ passes through $i$. In an analogous way we prove that every geodesic on $\sigma_i^a\sigma_j^a(\omega)$ passes through $j$. 

It remains to prove that there is a path $\bar\gamma$ that omits both $i$ and $j$ for which \eqref{eqn|AMinusBarGamma} holds. It suffices to take any geodesic on $\sigma_i^b\sigma_j^b(\omega)$. The equality \eqref{eqn|AMinusBarGamma} follows directly from \eqref{eqn|Necessary2AMinus}. We only need to prove that every geodesic on $\sigma_i^b\sigma_j^b(\omega)$ omits both $i$ and $j$. Assume the contrary, that there is a geodesic on $\sigma_i^b\sigma_j^b(\omega)$ that passes through $i$. 
Then, $\sigma_i^b\sigma_j^b(\omega)\in \hat E_i$ and Proposition \ref{thm|forComputer} (b) implies 
$f\left(\sigma_i^b\sigma_j^b(\omega)\right)=f\left(\sigma_i^a\sigma_j^b(\omega)\right)+(b-a)$, which contradicts \eqref{eqn|Necessary2AMinus}. This completes the proof that every geodesic on $\sigma_i^b\sigma_j^b(\omega)$ omits $i$. The proof that every geodesic on $\sigma_i^b\sigma_j^b(\omega)$ omits $j$ is analogous.
\end{proof}

\begin{proposition}\label{thm|Sufficient2AMinus} 
Assume that $a$ is an integer multiple of $b-a$. If $S=\{i,j\}$ and if every geodesic on $\sigma_i^a\sigma_j^a(\omega)$ contains both $i$ and $j$ and there is a path $\bar\gamma$ from source to sink that contains neither $i$ nor $j$ 
such that \eqref{eqn|AMinusBarGamma} holds, then $\omega\in A_S^-$.
\end{proposition}
\begin{proof} Let us first prove that \begin{eqnarray}\partial_if\left(\sigma_j^a(\omega)\right)=b-a, 
\label{eqn|AMinusBarGamma01} \end{eqnarray} i.e. that $\sigma_j^a\left(\sigma_i^a(\omega)\right)\in \hat A_i$.
Assume the contrary, that $\sigma_j^a\left(\sigma_i^a(\omega)\right)\not\in \hat A_i$. From $\hat A_i=A_i$ and $E_i\subseteq A_i$, we obtain that there is a geodesic on $\sigma_i^a\sigma_j^a(\omega)$ that omits $i$. This contradicts our assumption. We have, therefore, proved that $\partial_if\left(\sigma_j^a(\omega)\right)=b-a$. In an analogous way we prove \begin{eqnarray}\partial_jf\left(\sigma_i^a(\omega)\right)=b-a. 
\label{eqn|AMinusBarGamma02} \end{eqnarray} 
Let us now prove that the path $\bar\gamma$ from source to sink that satisfies \eqref{eqn|AMinusBarGamma} is a geodesic on $\sigma_i^b\sigma_j^b(\omega)$. 
 We have  \begin{eqnarray}
f\left(\sigma_i^a\sigma_j^a(\omega)\right)+(b-a)=
T\left(\bar\gamma,\sigma_i^b\sigma_j^b(\omega)\right)\geq f\left(\sigma_i^b\sigma_j^b(\omega)\right)\geq 
f\left(\sigma_i^b\sigma_j^a(\omega)\right). \label{eqn|AMinusBarGammaInt}\end{eqnarray}
We already established the equality \eqref{eqn|AMinusBarGamma01}, hence 
both inequalities in \eqref{eqn|AMinusBarGammaInt} must be equalities.
Consequently, $\bar\gamma$ is a geodesic on $\sigma_i^b\sigma_j^b(\omega)$ and
\begin{eqnarray}f\left(\sigma_i^b\sigma_j^b(\omega)\right)=f\left(\sigma_i^a\sigma_j^a(\omega)\right)+b-a. 
\label{eqn|AMinusBarGamma03} \end{eqnarray} 
The equalities \eqref{eqn|AMinusBarGamma01}, \eqref{eqn|AMinusBarGamma02}, and \eqref{eqn|AMinusBarGamma03} imply that $\partial_Sf(\omega)=-(b-a)$.
\end{proof}

\begin{proposition} Assume that $a/(b-a)\in\mathbb N$. Consider the first passage percolation time  $f^{\tau}$ on the torus in $d$ dimensions, and define the events $E_i$, $E_j$, $A_i$, and $A_j$ with respect to the random variable $f^{\tau}$. Let $\zeta>0$ be a fixed real number. Then, 
there exists a constant $C>0$ such that
\begin{eqnarray}
\sum_{i,j} \left(\mathbb P\left(E_i\cap E_j\right)\right)^{1+\zeta}&\leq& C\cdot n^{2+\zeta-\zeta d}, \label{eqn|L2D3}
\end{eqnarray}
where the summation is over all pairs of edges $(i,j)$. 
Moreover, for every $\zeta'\in(0,\zeta)$ and every integer $m$ there exists a real number $D_m>0$ that depends only on $\zeta'$, $m$, $a$, $b$, and $p$ such that whenever the dimension $d$ satisfies $d\geq D_m$, the following holds
\begin{eqnarray}
\sum_{|T|=m} \left(\mathbb P\left(\bigcap_{k\in T}E_k\right)\right)^{1+\zeta}&\leq& n^{-\zeta'd}. \label{eqn|L2Dm}
\end{eqnarray}
\end{proposition}
\begin{proof} 
Recall that Proposition 16 
 from \cite{FirstPaper2025} implies that for every edge $i$,
\begin{eqnarray}
\mathbb P\left(\partial_if^{\tau}\neq 0\right)&\leq& \frac{b}{apn^{d-1}}. \label{eqn|L2D3EasierBound}
\end{eqnarray}
For shorter notation, we will write $\theta=\frac{b}{ap}$.
Using \eqref{eqn|L2D3EasierBound} we obtain
\begin{align}
\sum_{i,j}\left(\mathbb P(E_i\cap E_j)\right)^{1+\zeta}&\leq\sum_{i,j}\mathbb P(E_i)^{\zeta}\cdot \mathbb P(E_i\cap E_j)  =\sum_{i,j}\mathbb P(E_i)^{1+\zeta}\cdot \mathbb P\left(\left. E_j\right|E_i\right) \nonumber \\ 
&\leq  \frac{\theta^{1+\zeta}}{n^{(\zeta +1)(d-1)}}\cdot
\sum_i\sum_j\mathbb E\left[\left.1_{E_j}\right|E_i\right]\nonumber \\
&= \frac{\theta^{1+\zeta}}{n^{(\zeta +1)(d-1)}}\cdot \sum_i\mathbb E\left[\left.\left(\sum_j1_{E_j}\right)\right|E_i\right]. 
\label{eqn|L2D3BeforeCounting}
\end{align}
Observe that no matter what $i$ is, the summation $\sum_j1_{E_j}$ is at most equal to the total number of edges that a geodesic could possibly have. This number is bounded by $\frac ban$. Therefore, \eqref{eqn|L2D3BeforeCounting} now implies
\begin{eqnarray}\sum_{i,j}\left(\mathbb P(E_i\cap E_j)\right)^{1+\zeta}&\leq&
\frac{\theta^{1+\zeta}\cdot\frac ba\cdot n}{n^{(\zeta +1)(d-1)}}\cdot \sum_i1\leq C\frac{n^{d+1}}{n^{(\zeta +1)(d-1)}}, \label{eqn|L2D3Almost}
\end{eqnarray}
where $C=\theta^{1+\zeta}\cdot \frac ba\cdot D$, where $D$ is the integer such that $Dn^d$ is the total number of edges in the grid. The inequality \eqref{eqn|L2D3Almost} implies 
\eqref{eqn|L2D3}.

We now use induction on $m$ to prove \eqref{eqn|L2Dm}. If $m=2$, we only need to prove that 
$ Cn^{2+\zeta-\zeta d}<n^{-\zeta'd}$ for sufficiently large $d$. After taking logarithms, we obtain that it is sufficient to prove 
\begin{eqnarray}\log(C)+(2+\zeta-(\zeta-\zeta') d)\log n< 0. \label{eqn|L2Dmc2Log}
\end{eqnarray}
For sufficiently large $d$, the number $2+\zeta-(\zeta-\zeta')d$ is negative, hence the left-hand side of \eqref{eqn|L2Dmc2Log} increases if $n$ is replaced by $2$. It suffices to show that for sufficiently large $d$, the function $\psi(d)$ is negative, where 
\begin{eqnarray*}
\psi(d)&=&\log(C)+(2+\zeta-(\zeta-\zeta') d)\log 2.
\end{eqnarray*}
This is obvious because the function $\psi(d)$ is linear in $d$ and the coefficient in front of $d$ is negative. 
Let us now assume that the statement holds for fixed $m\geq 2$. 
More precisely, assume that $\zeta'\in(0,\zeta)$ is fixed and let $\zeta''=\frac12(\zeta+\zeta')$. Our induction hypothesis implies that there exists $D_m$ such that for $d\geq D_m$, the following holds 
\begin{eqnarray}
\sum_{|T|=m} \left(\mathbb P\left(\bigcap_{k\in T}E_k\right)\right)^{1+\zeta}&\leq& n^{-\zeta''d}.
\label{eqn|L2DmIH}
\end{eqnarray}
We will prove that there exists $D_{m+1}$ such that 
for $d\geq D_{m+1}$ we have
\begin{eqnarray}
\sum_{|S|=m+1} \left(\mathbb P\left(\bigcap_{k\in S}E_k\right)\right)^{1+\zeta}&\leq& n^{-\zeta'd}. \label{eqn|L2Dmp1}
\end{eqnarray}
The left-hand side of \eqref{eqn|L2Dmp1} can be written as 
\begin{align*}
\sum_{|S|=m+1} \left(\mathbb P\left(\bigcap_{k\in S}E_k\right)\right)^{1+\zeta}&=
\sum_{|T|=m}\sum_i \left(\mathbb P\left(E_i\cap \bigcap_{k\in T}E_k\right)\right)^{1+\zeta}.\end{align*}
Since the intersection of two sets is a subset of each, we derive
\begin{align*}
\sum_{|S|=m+1} \left(\mathbb P\left(\bigcap_{k\in S}E_k\right)\right)^{1+\zeta}&\leq \sum_{|T|=m}\sum_i \left(\mathbb P\left(\bigcap_{k\in T}E_k\right)\right)^{\zeta}\cdot \mathbb P\left(E_i\cap \bigcap_{k\in T}E_k\right).\end{align*}
We can express the last probability as a conditional expectation and take the common factor outside the summation in $i$.  
\begin{align}
\sum_{|S|=m+1} \left(\mathbb P\left(\bigcap_{k\in S}E_k\right)\right)^{1+\zeta}\leq
\sum_{|T|=m}\left(\mathbb P\left(\bigcap_{k\in T}E_k\right)\right)^{1+\zeta}\cdot \sum_i \mathbb E\left( 1_{E_i}\left| \bigcap_{k\in T}E_k\right.\right).   
\nonumber
\end{align}
We now use the linearity of conditional expectation and the fact that $\sum_i1_{E_i}$ is bounded above by the total number of edges that a geodesic could have. As before, we use that every geodesic can have at most $\frac ba n$ edges. Hence, the last inequality  
becomes
\begin{eqnarray}
\sum_{|S|=m+1} \left(\mathbb P\left(\bigcap_{k\in S}E_k\right)\right)^{1+\zeta}&\leq& \frac ba\cdot n
\sum_{|T|=m}\left(\mathbb P\left(\bigcap_{k\in T}E_k\right)\right)^{1+\zeta}.
\label{eqn|TowardsL2Dmp1}
\end{eqnarray}
 The inequality \eqref{eqn|L2DmIH} implies 
 \begin{eqnarray}
\sum_{|S|=m+1} \left(\mathbb P\left(\bigcap_{k\in S}E_k\right)\right)^{1+\zeta}&\leq& \frac ba\cdot n^{1-\zeta''d}.
\label{eqn|TowardsL2DAIH}
\end{eqnarray}
It suffices to prove that there exists $D_{m+1}$ such that $d\geq D_{m+1}$ implies 
$\frac ba\cdot n^{1-\zeta''d}\leq n^{-\zeta'd}$. After taking logarithms of both sides, the last inequality becomes equivalent to 
\begin{eqnarray}
\log\frac ba+(1-(\zeta''-\zeta')d)\log n&\leq& 0. \label{eqn|TowardsL2DAIHAfterLog}
\end{eqnarray}
Since $\zeta''-\zeta'>0$, for sufficiently large $d$, the number $1-(\zeta''-\zeta')d$ is negative and the left-hand side of 
\eqref{eqn|TowardsL2DAIHAfterLog} increases if $n$ is replaced with 2. Define the function 
\begin{eqnarray*}
\varphi(d)&=&\log\frac ba+(1-(\zeta''-\zeta')d)\log 2.
\end{eqnarray*}
The function is linear in $d$ and has negative derivative. Hence, $\lim_{d\to+\infty}\varphi(d)=-\infty$ and there exists $D_{m+1}$ such that \eqref{eqn|TowardsL2DAIHAfterLog} holds. This completes the induction step, and, consequently, the proof of \eqref{eqn|L2Dm}.
\end{proof}

\begin{proposition} \label{thm|BoundAijM} Assume that $a/(b-a)\in\mathbb N$.
Let $\zeta>0$. There is a constant $C$ that depends only on $\zeta$, $a$, $b$, and $p$ such that 
\begin{eqnarray}
\sum_{i,j}\left(\mathbb P(A_{\{i,j\}}^-)\right)^{1+\zeta}&\leq&C\cdot n^{2+\zeta-\zeta d}.
\label{eqn|BoundAijM}
\end{eqnarray}
\end{proposition}
\begin{proof}
From Proposition \ref{thm|Necessary2AMinus}, we have 
\begin{eqnarray*}
A_{\{i,j\}}^-&\subseteq& \left\{\omega: \sigma_i^a\sigma_j^a(\omega)\in E_i\cap E_j\right\}.
\end{eqnarray*}
Therefore, 
\begin{eqnarray}
\mathbb P\left(A_{\{i,j\}}^-\right)&\leq & 
\mathbb P\left(\omega: \sigma_i^a\sigma_j^a(\omega)\in E_i\cap E_j\right). 
\label{eqn|BoundAijM01}
\end{eqnarray}
The event $\{\sigma_i^a\sigma_j^a(\omega)\in E_i\cap E_j\}$ is independent from $\{\omega_i=a,\omega_j=a\}$ whose probability is $p^2$. Therefore, 
\eqref{eqn|BoundAijM01} implies 
\begin{align}
\mathbb P\left(A_{\{i,j\}}^-\right)&\leq 
\frac1{p^2}
\mathbb P\left( \sigma_i^a\sigma_j^a(\omega)\in E_i\cap E_j \right)\cdot\mathbb P\left(\omega_i=a,\omega_j=a\right) \nonumber\\
&=
 \frac1{p^2}
\mathbb P\left(\left\{\sigma_i^a\sigma_j^a(\omega)\in E_i\cap E_j\right\}\cap\left\{\omega_i=a,\omega_j=a\right\}\right) \nonumber\\ 
&=\frac1{p^2}
\mathbb P\left(\left\{ \omega\in E_i\cap E_j\right\}\cap\left\{\omega_i=a,\omega_j=a\right\}\right)\leq \frac1{p^2}\mathbb P\left(E_i\cap E_j\right).
\label{eqn|BoundAijM02}
\end{align}
 
The required inequality now follows from \eqref{eqn|BoundAijM02}
 and \eqref{eqn|L2D3}.
\end{proof}

\begin{proposition} \label{thm|BoundAijPl}
Assume that $a/(b-a)\in\mathbb N$.
Let $\zeta>0$. There is a constant $C$ that depends only on $\zeta$, $a$, $b$, and $p$ such that 
\begin{eqnarray}
\sum_{i,j}\left(\mathbb P(A_{\{i,j\}}^+)\right)^{1+\zeta}&\leq&C\cdot n^{2+\zeta-\zeta d}.
\label{eqn|BoundAijPl}
\end{eqnarray}
\end{proposition}

\begin{proof} By Proposition \ref{thm|invarianceLevelSetsGen}, the event $A_{\{i,j\}}^+$ depends only on the coordinates of $\omega$ outside $\{i,j\}$, while $\{\omega_i=x,\omega_j=y\}$ depends only on the coordinates inside $\{i,j\}$. Therefore $A_{\{i,j\}}^+$ is independent of $\{\omega_i=x,\omega_j=y\}$ for every pair $(x,y)\in\{a,b\}^2$.

\begin{eqnarray*}
\mathbb P\left(A_{\{i,j\}}^+\right)&=&\sum_{(x,y)\in\{a,b\}^2} \mathbb P\left(A_{\{i,j\}}^+\cap \{\omega_i=x,\omega_j=y\}\right) \\ &=&
\sum_{(x,y)\in\{a,b\}^2} \mathbb P\left(A_{\{i,j\}}^+\right)\cdot \mathbb P\left(\omega_i=x,\omega_j=y\right).\end{eqnarray*}
If we define $\rho=\max\left\{\frac{1-p}p,\frac{p}{1-p}\right\}$, then for every edge $i$ we have $\mathbb P(\omega_i=x)\leq \rho\mathbb P(\omega_i=a)$ and 
$\mathbb P(\omega_i=x)\leq \rho\mathbb P(\omega_i=b)$, hence
\begin{eqnarray*}
\mathbb P\left(A_{\{i,j\}}^+\right)
&\leq& \rho^2\sum_{(x,y)\in\{a,b\}^2} \mathbb P\left(A_{\{i,j\}}^+\right)\cdot \mathbb P\left(\omega_i=a,\omega_j=b\right) \\ &=& 
\rho^2\sum_{(x,y)\in\{a,b\}^2} \mathbb P\left(A_{\{i,j\}}^+,\omega_i=a,\omega_j=b\right).
\end{eqnarray*} 
Since all the four summands are equal, we obtain
\begin{eqnarray}
\mathbb P\left(A_{\{i,j\}}^+\right)
 &\leq&4 \rho^2 \mathbb P\left(A_{\{i,j\}}^+,\omega_i=a,\omega_j=b\right). \label{eqn|BeforeConditioningIAJB}
 \end{eqnarray}
The following inequality is obtained in a similar way.
 \begin{eqnarray}
\mathbb P\left(A_{\{i,j\}}^+\right)
 &\leq&4 \rho^2 \mathbb P\left(A_{\{i,j\}}^+,\omega_i=b,\omega_j=a\right). \label{eqn|BeforeConditioningIBJA}
 \end{eqnarray}
Since $A_{i,j}^+\cap \{\omega_i=a,\omega_j=b\}\subseteq E_i$, we use \eqref{eqn|BeforeConditioningIAJB} to derive
\begin{eqnarray} \mathbb P\left(A_{\{i,j\}}^+\right)^{\zeta}\leq  (4\rho^2)^{\zeta}\mathbb P(E_i)^{\zeta}. \label{eqn|AijPlRaisedToZetaOnly} \end{eqnarray} If we multiply \eqref{eqn|AijPlRaisedToZetaOnly} with \eqref{eqn|BeforeConditioningIBJA}, we obtain 
\begin{align}\nonumber
\mathbb P\left(A_{\{i,j\}}^+\right)^{1+\zeta}&\leq (4\rho^2)^{1+\zeta}\cdot \mathbb P(E_i)^{\zeta} \cdot 
\mathbb P\left(A_{\{i,j\}}^+,\omega_i=b,\omega_j=a\right)\\  
&=
(4\rho^2)^{1+\zeta}\cdot \mathbb P(E_i)^{1+\zeta} \cdot 
\mathbb P\left(\left.A_{\{i,j\}}^+,\omega_i=b,\omega_j=a\right|E_i\right).  \label{eqn|beforeEj}
\end{align}
We will now show that \eqref{eqn|Necessary2APlus} allows us to conclude  
\begin{eqnarray}A_{\{i,j\}}^+\cap\{\omega_i=b\}\cap\{\omega_j=a\} & \subseteq & E_j.
\label{eqn|consequenceOfNecessary2APlus} 
\end{eqnarray} 
Indeed, if $\omega$ belongs to the left-hand side of \eqref{eqn|consequenceOfNecessary2APlus}, then 
$\omega=\sigma_i^b\sigma_j^a(\omega)$. If we assumed the contrary, that $\sigma_i^b\sigma_j^a(\omega)\in E_j^C$, Proposition \ref{thm|forComputer} (a) would guarantee that $f(\sigma_i^b\sigma_j^b(\omega))=
f(\sigma_i^b\sigma_j^a(\omega))$, contradicting \eqref{eqn|Necessary2APlus}. We can now use
the inequality \eqref{eqn|beforeEj} and the inclusion 
\eqref{eqn|consequenceOfNecessary2APlus} to derive  
\begin{eqnarray}
\mathbb P\left(A_{\{i,j\}}^+\right)^{1+\zeta} 
&\leq& (
4\rho^2)^{1+\zeta}\cdot \mathbb P(E_i)^{1+\zeta} \cdot 
\mathbb P\left(\left.E_j\right|E_i\right).
\label{eqn|AijPlBeforeConditioning}
\end{eqnarray}
We now sum \eqref{eqn|AijPlBeforeConditioning} over all edges $j$ and use linearity of conditional expectation to obtain
\begin{eqnarray*}
\sum_j\mathbb P\left(A_{i,j}^+\right)^{1+\zeta}&\leq& (4\rho^2)^{1+\zeta}\mathbb P\left(E_i\right)^{1+\zeta}\cdot \mathbb E\left[\left.
\sum_j 1_{E_j}\right|E_i
\right].
\end{eqnarray*} 
The summation $\sum_j1_{E_j}$ is at most equal to the total number of edges that a geodesic could possibly have. Since $\sum_j1_{E_j}\leq \frac ba\cdot n$, we obtain 
\begin{eqnarray*}
\sum_j\mathbb P\left(A_{i,j}^+\right)^{1+\zeta}&\leq& (4\rho^2)^{1+\zeta}\mathbb P\left(E_i\right)^{1+\zeta}\cdot \frac ba\cdot n.
\end{eqnarray*} 
We now sum over all $Dn^d$ vertices $i$ and use \eqref{eqn|L2D3EasierBound} to obtain
\begin{eqnarray*}
\sum_i\sum_j\mathbb P\left(A_{i,j}^+\right)^{1+\zeta}&\leq& (4\rho^2)^{1+\zeta}\cdot D\cdot \frac ba\cdot\left(\frac{b}{ap}\right)^{1+\zeta} \cdot \frac{n^{d+1}}{n^{(1+\zeta)(d-1)}}.
\end{eqnarray*} 
The last inequality implies \eqref{eqn|BoundAijPl}.
\end{proof}

\begin{proof}[Proof of Theorem \ref{thm|BoundFourierLevel2Torus}]
The inequality \eqref{eqn|BoundFourierLevel1Torus} was established in the proof of Corollary 1 
 in \cite{FirstPaper2025}. It is an easy consequence of \eqref{eqn|L2D3EasierBound}, the symmetry, and the fact that there is a total of $O( n^d)$ edges in the graph. 

We need to prove \eqref{eqn|BoundFourierLevel2Torus}, that is $\Sigma_2(f^{\tau})\leq Cn^{3-d}$ for some constant $C$. The level-2 Fourier sum can be written as 
\begin{eqnarray}
\Sigma_2(f^{\tau})&\leq& (b-a)^2\cdot \sum_{|S|=2} \left(\mathbb P(\partial_Sf\neq 0)\right)^2 = (b-a)^2
\cdot 
\sum_{|S|=2} \left(\mathbb P(A_S^-)+P(A_S^+)\right)^2\nonumber \\
&\leq& 2(b-a)^2\left(\sum_{|S|=2}\left(\mathbb P(A_S^-)\right)^2 +\sum_{|S|=2}\left(\mathbb P(A_S^+)\right)^2 \right).
\label{eqn|BoundFourierLevel2Torus01}
\end{eqnarray}
We now use \eqref{eqn|BoundAijM} and \eqref{eqn|BoundAijPl} with $\zeta=1$ and \eqref{eqn|BoundFourierLevel2Torus01} to conclude that there is a constant $D$ such that
\begin{eqnarray}
\Sigma_2(f^{\tau})&\leq& 2(b-a)^2D\cdot n^{3-d}.
\end{eqnarray}
By choosing $C=2(b-a)^2D$ we obtain \eqref{eqn|BoundFourierLevel2Torus}.
\end{proof}



\subsection{Bounds on sums of probabilities of large essential and influential sets}\label{subs|GeneralM}

\begin{proposition}\label{thm|introducingAAtCost}
There exists a constant $C$ that depends only on $a$, $b$, $p$, $d$, and $m$ such that for every 
 set $T$ of $m$ edges and every edge $k\not \in T$, the following inequality holds
 \begin{eqnarray}
 \mathbb P\left(\partial_{T\cup \{k\}}f\neq 0\right)&\leq& C\cdot \mathbb P\left(\{\partial_Tf\neq 0\}\cap A_k\right). \label{eqn|introducingAAtCost}
 \end{eqnarray}
 \end{proposition}
 \begin{proof} Let us denote $S=T\cup \{k\}$. Let $\overrightarrow t$ be a vector of length $m$ whose components are the elements of $T$. 
 Let $\overrightarrow s$ be the vector of length $m+1$ whose last component is $k$ and the first $m$ components are the same as $\overrightarrow t$. Let us first prove that
\begin{eqnarray}
\left\{\partial_Sf\neq 0\right\}&\subseteq& \bigcup_{\overrightarrow \xi\in \{a,b\}^{m}} \left\{
\sigma^{\overrightarrow \xi}_{\overrightarrow t}(\omega)\in A_k\right\}. \label{eqn|UnionOverXiEk}
\end{eqnarray}
If we assume that \eqref{eqn|UnionOverXiEk}
were not true, then there exists $\omega$ such that $\partial_Sf(\omega)\neq 0$ and  
\begin{eqnarray}\partial_kf\left(\sigma^{\overrightarrow \xi}_{\overrightarrow t}(\omega)\right)=0, \label{eqn|ViolationXiEk}
\end{eqnarray}
for every $\overrightarrow\xi\in\{a,b\}^m$. We can group the terms $\sigma_k^b\sigma^{\overrightarrow \xi}_{\overrightarrow t}(\omega)$ and 
 $\sigma_k^a\sigma^{\overrightarrow \xi}_{\overrightarrow t}(\omega)$ in \eqref{eqn|LeibnitzRule} and use \eqref{eqn|ViolationXiEk} to obtain
\begin{eqnarray}
\partial_Sf(\omega)&=&\sum_{\overrightarrow\xi\in\{a,b\}^m}
(-1)^{\mathbf1_a(\xi_1)+ \cdots+\mathbf1_a(\xi_m)}\partial_kf\left(\sigma^{\overrightarrow \xi}_{\overrightarrow t}(\omega)\right)=0.
\end{eqnarray}
This contradicts the assumption $\partial_Sf(\omega) \neq 0$. We have proved \eqref{eqn|UnionOverXiEk}, which we can now use to obtain
\begin{eqnarray}
\mathbb P\left(\partial_Sf(\omega)\neq 0\right)&\leq& \sum_{\overrightarrow\xi\in\{a,b\}^m}
\mathbb P\left(\left\{\partial_Sf(\omega)\neq 0\right\}\cap \left\{
\sigma^{\overrightarrow \xi}_{\overrightarrow t}(\omega)\in A_k\right\}\right)
.
 \label{eqn|afterUnionOverXiEk}
\end{eqnarray}
Let us analyze the event $\left\{\sigma_{\overrightarrow t}^{\overrightarrow \xi}(\omega)=\omega\right\}$. It states that the coordinates of $\omega$ that correspond to the edges of $\overrightarrow t$ are assigned the values $\overrightarrow \xi$. The probability of this event satisfies 
\begin{eqnarray}\mathbb P
\left(\sigma_{\overrightarrow t}^{\overrightarrow \xi}(\omega)=\omega\right)&=& \prod_{i=1}^m p^{1_a(\xi_i)}\cdot (1-p)^{1_b(\xi_i)}\geq \left( \min\{p,1-p\}\right)^m. \label{eqn|rhoBound}
\end{eqnarray}
Let us introduce a constant $\rho=\left( \min\{p,1-p\}\right)^{-m}$. This constant depends only on $p$ and $m$.
The inequalities \eqref{eqn|afterUnionOverXiEk} and \eqref{eqn|rhoBound} imply
\begin{eqnarray}
\mathbb P\left(\partial_Sf(\omega)\neq 0\right)&\leq& \rho\sum_{\overrightarrow\xi\in\{a,b\}^m}
\mathbb P\left(\left\{\partial_Sf(\omega)\neq 0\right\}\cap \left\{
\sigma^{\overrightarrow \xi}_{\overrightarrow t}(\omega)\in A_k\right\}\right) \nonumber \\ &&\quad\quad\times  \mathbb P\left(\sigma_{\overrightarrow t}^{\overrightarrow \xi}(\omega)=\omega\right)
.
 \label{eqn|afterUnionOverXiEk2}
\end{eqnarray}
 
The event $\left\{\partial_Sf(\omega)\neq 0\right\}$ depends only on the coordinates of $\omega$ outside $\overrightarrow s$ by Proposition \ref{thm|invarianceLevelSetsGen} together with \eqref{eqn|LeibnitzRule}. The event $\left\{\sigma^{\overrightarrow \xi}_{\overrightarrow t}(\omega)\in A_k\right\}$ admits the rewrite
\begin{eqnarray*}
\left\{\sigma^{\overrightarrow \xi}_{\overrightarrow t}(\omega)\in A_k\right\} 
&=&
\left\{\sigma_k^a\circ\sigma^{\overrightarrow \xi}_{\overrightarrow t}(\omega)\in E_k\right\},
\end{eqnarray*}
which makes its dependence on the coordinates of $\omega$ outside $\overrightarrow s$ apparent: $\sigma_k^a$ overwrites the $k$-th coordinate and $\sigma^{\overrightarrow \xi}_{\overrightarrow t}$ overwrites the coordinates in $\overrightarrow t$. Therefore, the intersection of the events $\left\{\partial_Sf(\omega) \neq 0\right\}\cap \left\{
\sigma^{\overrightarrow \xi}_{\overrightarrow t}(\omega)\in A_k\right\}$ is independent from $\left\{
\sigma_{\overrightarrow t}^{\overrightarrow\xi}(\omega)=\omega
\right\}$, because the former depends only on coordinates outside $\overrightarrow s$ while the latter depends only on coordinates inside $\overrightarrow t\subseteq \overrightarrow s$.

The inequality \eqref{eqn|afterUnionOverXiEk2} implies 
\begin{eqnarray}
\mathbb P\left(\partial_Sf(\omega)\neq 0\right)&\leq& \rho \sum_{\overrightarrow\xi\in\{a,b\}^m}
\mathbb P\Big(\left\{\partial_Sf\left(\sigma_{\overrightarrow t}^{\overrightarrow\xi}(\omega)\right)\neq 0\right\} \cap \left\{
\sigma^{\overrightarrow \xi}_{\overrightarrow t}(\omega)\in A_k\right\}
\nonumber \\ &&\quad\quad\quad\quad\quad\quad \cap
\left\{\sigma_{\overrightarrow t}^{\overrightarrow\xi}(\omega)=\omega\right\}\Big) \nonumber \\
&=&
 \rho \sum_{\overrightarrow\xi\in\{a,b\}^m}
\mathbb P\Big(\left\{\partial_Sf(\omega)\neq 0\right\} \cap \left\{
\omega\in A_k\right\} \cap
\left\{\sigma_{\overrightarrow t}^{\overrightarrow\xi}(\omega)=\omega\right\}\Big).
\nonumber
\end{eqnarray}
Dropping the constraint $\left\{\sigma_{\overrightarrow t}^{\overrightarrow\xi}(\omega)=\omega\right\}$ and summing over all $2^m$ choices of $\overrightarrow\xi$ yields
\begin{eqnarray}
\mathbb P\left(\partial_Sf(\omega)\neq 0\right)&\leq& 
 \rho \sum_{\overrightarrow\xi\in\{a,b\}^m}
\mathbb P\Big(\left\{\partial_Sf(\omega)\neq 0\right\} \cap A_k\Big)\nonumber 
\\
& =&
 2^m\rho \cdot \mathbb P\Big(\left\{\partial_Sf(\omega)\neq 0\right\} \cap A_k\Big).
 \label{eqn|afterUnionOverXiEkAfterIndependence}
\end{eqnarray}

If we write $\partial_Sf(\omega)$ as the difference of derivatives with respect to $T$
\begin{eqnarray*}
\partial_Sf(\omega)&=&\partial_Tf\left(\sigma_k^b(\omega)\right)-\partial_Tf\left(\sigma_k^a(\omega)\right),
\end{eqnarray*}
it becomes clear that \begin{eqnarray} \left\{\partial_Sf(\omega)\neq 0\right\}&\subseteq&
\Big\{\partial_Tf\left(\sigma_k^b(\omega)\right)\neq 0\Big\}\cup
\Big\{\partial_Tf\left(\sigma_k^a(\omega)\right)\neq 0\Big\}. \label{eqn|SIntoTwoTs}
\end{eqnarray}
The inequalities \eqref{eqn|afterUnionOverXiEkAfterIndependence}
and \eqref{eqn|SIntoTwoTs} imply
\begin{eqnarray}
\mathbb P\left(\partial_Sf(\omega)\neq 0\right)&\leq&  
 2^m\rho\Big( \mathbb P\Big(\left\{\partial_Tf\left(\sigma_k^b(\omega)\right)\neq 0\right\} \cap A_k\Big) \nonumber \\
 &&\quad\quad
 +
 \mathbb P\Big(\left\{\partial_Tf\left(\sigma_k^a(\omega)\right)\neq 0\right\} \cap A_k\Big)\Big) \label{eqn|afterTabk2}.
 \end{eqnarray}
We can now use $A_k=(\sigma_k^b)^{-1}(A_k)$, which follows from \eqref{eqn|invarianceLevelSets}, to re-write the first term on the right-hand side of \eqref{eqn|afterTabk2} as
\begin{eqnarray}
\mathbb P\Big(\left\{\partial_Tf\left(\sigma_k^b(\omega)\right)\neq 0\right\} \cap A_k\Big) =
\mathbb P\Big(\left\{\partial_Tf\left(\sigma_k^b(\omega)\right)\neq 0\right\}  
   \cap \left(\sigma_k^b\right)^{-1}\left(A_k\right)\Big). &&\label{eqn|afterTabk2Term1}
\end{eqnarray}
The right-hand side of \eqref{eqn|afterTabk2Term1} is an intersection of two events, each of which is determined by first flipping the $k$-th bit of $\omega$ into $b$. Hence, the intersection on the right-hand side of \eqref{eqn|afterTabk2Term1} is independent of the event $\{\omega_k=b\}$, whose probability is $(1-p)$. 
Hence, \eqref{eqn|afterTabk2Term1} implies 
\begin{align}
\mathbb P\Big(\left\{\partial_Tf\left(\sigma_k^b(\omega)\right)\neq 0\right\} \cap A_k\Big)&=
\frac1{1-p}\cdot \mathbb P\Big(\left\{\partial_Tf\left(\sigma_k^b(\omega)\right)\neq 0\right\} \nonumber \\
&\quad\quad  \cap \left\{\omega:  \sigma_k^b(\omega)\in A_k\right\}\Big)\cdot \mathbb P(\omega_k=b)
\nonumber \\
&= 
\frac1{1-p}\cdot \mathbb P\Big(\left\{\partial_Tf(\omega)\neq 0\right\} \cap A_k \cap \{\omega_k=b\}\Big) \nonumber \\
&\leq 
\frac1{1-p}\cdot \mathbb P\Big(\left\{\partial_Tf(\omega)\neq 0\right\} \cap A_k\Big) 
. \label{eqn|afterTabk2Term1Bound}
\end{align}

The second term on the right-hand side of \eqref{eqn|afterTabk2} satisfies
\begin{align}
\mathbb P\Big(\left\{\partial_Tf\left(\sigma_k^a(\omega)\right)\neq 0\right\} \cap A_k\Big)\Big)&=
\mathbb P\Big(\left\{\partial_Tf\left(\sigma_k^a(\omega)\right)\neq 0\right\} \cap (\sigma_k^a)^{-1}(E_k)\Big)\Big)\nonumber \\
&\leq \frac1p
\mathbb P\Big(\left\{\partial_Tf\left(\sigma_k^a(\omega)\right)\neq 0\right\} \cap (\sigma_k^a)^{-1}(E_k)\Big)\Big) \nonumber \\
&\quad \times \mathbb P(\omega_k=a)\nonumber \\
&=\frac1p
\mathbb P\Big(\left\{\partial_Tf\left(\sigma_k^a(\omega)\right)\neq 0\right\} \cap \left\{\sigma_k^a(\omega)\in E_k\right\} \nonumber \\
&\quad\quad \cap \{\omega_k=a\}\Big)  \nonumber \\
&=
\frac1p
\mathbb P\Big(\left\{\partial_Tf(\omega )\neq 0\right\} \cap E_k  \cap \{\omega_k=a\}\Big) .
\label{eqn|afterTabk3} 
\end{align}
The right-hand side of \eqref{eqn|afterTabk3}  is smaller than or equal to $\mathbb P\left(\left\{\partial_Tf(\omega)\neq 0\right\}\cap A_k\right)$. This inequality together with \eqref{eqn|afterTabk2Term1Bound} and 
\eqref{eqn|afterTabk2} implies 
\eqref{eqn|introducingAAtCost}. 
 \end{proof}

 \begin{conjecture}\label{conj|introducingEAtCost}
There exists a constant $C$ that depends only on $a$, $b$, $p$, $d$, and $m$ such that for every 
 set $T$ of $m$ edges and every edge $k\not \in T$, the following inequality holds
 \begin{eqnarray}
 \mathbb P\left(\partial_{T\cup \{k\}}f\neq 0\right)&\leq& C\cdot \mathbb P\left(\{\partial_Tf\neq 0\}\cap E_k\right). \label{eqn|introducingEAtCost}
 \end{eqnarray}
 \end{conjecture}
 
 \begin{proposition}\label{thm|inductionStep}
Let $\zeta>0$ be a fixed positive real number. Assume that the Conjecture \ref{conj|introducingEAtCost} is true. Assume that there exists a positive real number $\alpha_m$ and a positive real number $D_m$ such that for all dimensions $d\geq D_m$, the following holds
\begin{eqnarray}
\sum_{|T|=m} \left(\mathbb P\left(\partial_Tf\neq 0\right)\right)^{1+\zeta}&\leq&  n^{-\alpha_md}, \label{eqn|boundOrderk}
\end{eqnarray}
then for every positive real number $\alpha_{m+1}\in(0,\alpha_m)$ there exists a positive real number $D_{m+1}$ such that for all dimensions $d\geq D_{m+1}$, the following holds
\begin{eqnarray}
\sum_{|S|=m+1} \left(\mathbb P\left(\partial_Sf\neq 0\right)\right)^{1+\zeta}&\leq& n^{-\alpha_{m+1}d}. \label{eqn|boundOrderkp1}
\end{eqnarray}
 \end{proposition}
 \begin{proof} We can re-write the summation as 
 \begin{align}
 \sum_{|S|=m+1}\left(\mathbb P\left(\partial_Sf\neq 0\right)\right)^{1+\zeta}&=\sum_{|T|=m}\sum_k\left(\mathbb P\left(\partial_Sf\neq 0\right)\right)^{1+\zeta} \nonumber \\
 &\leq C \sum_{|T|=m}\sum_k\left(\mathbb P\left(\left\{\partial_Tf\neq 0\right\}\cap E_k\right)\right)^{1+\zeta} \nonumber \\
 &\leq C \sum_{|T|=m}\mathbb P\left(\partial_Tf\neq 0\right)^{\zeta}\sum_k\mathbb P\left(\left\{\partial_Tf\neq 0\right\}\cap E_k\right)\nonumber 
 \\
  &\leq C \sum_{|T|=m}\mathbb P\left(\partial_Tf\neq 0\right)^{1+\zeta}\sum_k\mathbb E\left[ \left.1_{E_k}\right|\left\{\partial_Tf\neq 0\right\}\right].
  \label{eqn|transformedIntoConditionalExpectation}
 \end{align}
Each geodesic can have at most $\frac ban$ edges. 
 The last summation in \eqref{eqn|transformedIntoConditionalExpectation} is bounded above by $\frac ban$. Hence, for $d\geq D_m$, 
 \eqref{eqn|transformedIntoConditionalExpectation} implies 
 \begin{align}
  \sum_{|S|=m+1}\left(\mathbb P\left(\partial_Sf\neq 0\right)\right)^{1+\zeta}&\leq 
  C\cdot \frac{b}{a}\cdot n\sum_{|T|=m}\mathbb P\left(\partial_Tf\neq 0\right)^{1+\zeta} \nonumber \\ & \leq C\cdot\frac ba\cdot n^{1-\alpha_md}. \quad
  \label{eqn|assumptionApplied}\end{align}
  We now need to prove that there exist real numbers $\alpha_{m+1}$ and $D_{m+1}$ such that $d\geq D_{m+1}$ implies 
  \begin{eqnarray}
  C\cdot\frac ba\cdot n^{1-\alpha_md}\leq n^{-\alpha_{m+1}d}.
  \label{eqn|RequirementForAlphaD}
 \end{eqnarray}
 After taking logarithms we obtain that \eqref{eqn|RequirementForAlphaD} is equivalent to 
\begin{eqnarray}\log\frac{bC}{a} + \left(1-\left(\alpha_m-\alpha_{m+1}\right)d\right)\log n\leq 0.
  \label{eqn|RequirementForAlphaDAfterLog}
\end{eqnarray}
The left-hand side of \eqref{eqn|RequirementForAlphaDAfterLog} is bounded above by 
\[\psi(d)=\log\frac{bC}{a} + \left(1-\left(\alpha_m-\alpha_{m+1}\right)d\right)\log 2,\] that clearly satisfies 
\[\lim_{d\to+\infty}\psi(d)=-\infty.\] Thus, the required $D_{m+1}$ must exist.
 \end{proof}
 



\begin{appendix} 
 
\section{Evaluations of derivatives in extreme environments} \label{Appendix|EvaluationsInExtremeEnvironments}

\begin{proposition} If $A$ and $B$ are two integers such that $0\leq A\leq B$, then
\begin{eqnarray}
\sum_{k=A}^B(-1)^k\binom{B}{k}&=&\left\{\begin{array}{ll}(-1)^A\cdot \binom{B-1}{A-1},& A\geq 1,\\
0,& A=0.\end{array}\right.
\label{eqn|ABAlternation}
\end{eqnarray}
\end{proposition}
\begin{proof}
In the case $A=0$, the sum on the left-hand side becomes $(1-1)^B$, which is $0$. 

It remains to consider the case $A\geq 1$. 
Notice that for $B-1\geq k$ the following holds: $\binom{B}{k}=\binom{B-1}k+\binom{B-1}{k-1}$.
\begin{eqnarray*}
\sum_{k=A}^B(-1)^k\binom{B}{k}&=&
(-1)^B\binom BB+
\sum_{k=A}^{B-1}(-1)^k\binom{B}{k}\\
&=&
(-1)^B+
\sum_{k=A}^{B-1}(-1)^k\binom{B-1}{k}+\sum_{k=A}^{B-1}(-1)^k\binom{B-1}{k-1}.
\end{eqnarray*}
We substitute $k'=k-1$ in the second summation.
\begin{eqnarray*}
\sum_{k=A}^B(-1)^k\binom{B}{k} 
&=&
(-1)^B+
\sum_{k=A}^{B-1}(-1)^k\binom{B-1}{k}-\sum_{k'=A-1}^{B-2}(-1)^{k'}\binom{B-1}{k'}\\
&=&(-1)^B+(-1)^{B-1}\cdot\binom{B-1}{B-1}-(-1)^{A-1}\cdot\binom{B-1}{A-1}\\
&=&(-1)^A\cdot \binom{B-1}{A-1}.
\end{eqnarray*}
This completes the proof of \eqref{eqn|ABAlternation}.
\end{proof}

\begin{proposition}
If $n$ and $k$ are non-negative integers, and $A$ an integer such that $n+A\in[0,k+n]$, the following holds
\begin{eqnarray}
\sum_{i=\max\{0,A\}}^{\min\{k,n+A\}} \binom ki\cdot\binom{n}{n+A-i}&=&\binom{n+k}{n+A}. \label{eqn|CatsAndDogs}
\end{eqnarray}
\end{proposition}

\begin{proof}
This is a famous cats-and-dogs identity. Assume that there are $k$ cats and $n$ dogs and that we want to count the number of ways to choose a committee consisting of $n+A$ animals. One obvious way to count committees is to ignore the differences between cats and dogs. The number becomes $\binom{n+k}{n+A}$. However, we can also do a counting by doing the case-work. If we denote by $i$ the number of cats in the committee, then the number $i$ must range from $\max\{0,A\}$ to $\min\{k,n+A\}$. The number of $(n+A)$-member committees with exactly $i$ cats is $\binom ki\cdot \binom{n}{n+A-i}$. 
\end{proof}

 \begin{proof}[Proof of Theorem \ref{thm|dLanes}]
Let $L$ be the total number of edges on each of the paths $\gamma_1$ and $\gamma_2$. We know that $L=2n+2N$, but it is more elegant to express everything in terms of $L$ and avoid $n$ and $N$. 

Assume, first, that $\beta_1-\beta_2\geq m_2$. The path $\gamma_2$ is a geodesic on every environment because 
the passage time over $\gamma_1$ is larger than or equal to $La+\beta_1(b-a)$, while the passage time over $\gamma_2$ is smaller than or equal to 
$La+(m_2+\beta_2)\cdot(b-a)$. Hence, if $i_2$ of the edges from $C_2$ have the value $b$, then the shortest passage time is equal to $La+(i_2+\beta_2)(b-a)$. 
Hence, \begin{eqnarray*}
\partial_Sf(\omega)&=&(b-a)\cdot \sum_{i_1=0}^{m_1}\sum_{i_2=0}^{m_2}(-1)^{m_1-i_1+m_2-i_2}\cdot \binom{m_1}{i_1}\cdot \binom{m_2}{i_2}\cdot (i_2+\beta_2)\\
&=&(b-a)\cdot A_1 \cdot A_2,
\end{eqnarray*}
where
\begin{eqnarray*}
A_1&=&\sum_{i_1=0}^{m_1}(-1)^{m_1+i_1}\binom{m_1}{i_1},\\
A_2&=&\sum_{i_2=0}^{m_2}(-1)^{m_2+i_2}\cdot \binom{m_2}{i_2}\cdot(i_2+\beta_2).
\end{eqnarray*}
The first factor satisfies $A_1=(1-1)^{m_1}=0$, hence $\partial_Sf(\omega)=0$. The case $\beta_2-\beta_1\geq m_1$ is analogous.

We now treat the case in which both $\beta_1-\beta_2\leq m_2-1$ and $\beta_2-\beta_1\leq m_1-1$. Assume that $i_1$ of the edges from $C_1$ and $i_2$ edges from $C_2$ are assigned the value $b$. The passage time over the path $\gamma_1$ is $La+(b-a)(i_1+\beta_1)$. The passage time over $\gamma_2$ is $La+(b-a)(i_2+\beta_2)$. Therefore, the derivative satisfies 
\begin{eqnarray*}
\partial_Sf(\omega)&=&(b-a)\cdot(-1)^{m_1+m_2}\\ 
&&\times\sum_{i_1=0}^{m_1}\sum_{i_2=0}^{m_2}(-1)^{i_1+i_2}\cdot \binom{m_1}{i_1}\cdot\binom{m_2}{i_2}\cdot \min\{i_1+\beta_1,i_2+\beta_2\}.
\end{eqnarray*}
Let us express the derivative as 
\begin{align}
\partial_Sf(\omega)&=(b-a)\cdot(-1)^{m_1+m_2}\cdot(S_1+S_2), \label{eqn|S1PlusS2}
\end{align}
where the summation $S_1$ is carried over the pairs $(i_1,i_2)$ for which $i_1+\beta_1\leq i_2+\beta_2$, and the summation $S_2$ is over the pairs $(i_1,i_2)$ for which $i_1+\beta_1\geq i_2+\beta_2+1$. 

If $i_1+\beta_1\leq i_2+\beta_2$, then $i_1\leq i_2+\beta_2-\beta_1\leq m_2+\beta_2-\beta_1$. Therefore, the number $S_1$ satisfies
\begin{align}
S_1&=\sum_{i_1=0}^{\mu_1}(-1)^{i_1}\cdot (i_1+\beta_1)\cdot \binom{m_1}{i_1}\cdot \sum_{i_2=\tau_2(i_1)}^{m_2}(-1)^{i_2}\cdot\binom{m_2}{i_2}, \;\mbox{where}\quad \label{eqn|S1} \\
\mu_1&=\min\{m_1,m_2+\beta_2-\beta_1\}\quad \mbox{and }\nonumber\\
\tau_2(i_1)&=\max\{0,i_1+\beta_1-\beta_2\}.\nonumber 
\end{align}
In a similar way we find that $S_2$ satisfies  
\begin{align}
S_2&=\sum_{i_2=0}^{\mu_2}(-1)^{i_2}\cdot (i_2+\beta_2)\cdot \binom{m_2}{i_2}\cdot \sum_{i_1=\tau_1(i_2)}^{m_1}(-1)^{i_1}\cdot\binom{m_1}{i_1}, \;\mbox{where}\quad \label{eqn|S2} \\
\mu_2&=\min\{m_2,m_1+\beta_1-\beta_2-1\}\quad \mbox{and }\nonumber\\
\tau_1(i_2)&=\max\{0,i_2+\beta_2-\beta_1+1\}.\nonumber 
\end{align}
The summation $S_1$ will now be broken in two: The first, $S_{11}$ contains the terms that correspond to $i_1$ in the range $[0, \beta_2-\beta_1]$.  The summation $S_{12}$ contains the terms $i_1$ in the range $[\beta_2-\beta_1+1,\mu_1]$. We will prove that $S_{11}$ is $0$. The summation $S_{12}$ will have simple lower bound $\tau_2(i_1)$ that is always equal to $i_1+\beta_1-\beta_2$.
The number $S_{11}$ is obviously $0$ if $\beta_2<\beta_1$. If $\beta_2\geq\beta_1$ and $i_1\leq \beta_2-\beta_1$, then $i_1+\beta_1-\beta_2\leq 0$. This implies that $\tau_2(i_1)=0$. The number $S_{11}$ becomes 
\begin{eqnarray*}
S_{11}&=&\sum_{i_1=0}^{\beta_2-\beta_1}(-1)^{i_1}\cdot (i_1+\beta_1)\cdot \binom{m_1}{i_1}\cdot \sum_{i_2=0}^{m_2}(-1)^{i_2}\cdot\binom{m_2}{i_2}.
\end{eqnarray*}
The inner summation $\sum_{i_2=0}^{m_2}(-1)^{i_2}\cdot\binom{m_2}{i_2}$ is equal to $0$ because it is equal to $(1-1)^{m_2}$. Therefore, $S_{11}=0$ and $S_1=S_{12}$, i.e. 
\begin{align} 
S_1&=\sum_{i_1=\max\{0,\beta_2-\beta_1+1\}}^{\min\{m_1,m_2+\beta_2-\beta_1\}}(-1)^{i_1}\cdot (i_1+\beta_1)\cdot \binom{m_1}{i_1}\cdot \sum_{i_2=\tau_2(i_1)}^{m_2}(-1)^{i_2}\cdot\binom{m_2}{i_2}. \label{eqn|S12} 
\end{align}
The equality \eqref{eqn|ABAlternation} transforms \eqref{eqn|S12} into
\begin{align} 
S_1&=\sum_{i_1=\max\{0,\beta_2-\beta_1+1\}}^{\min\{m_1,m_2+\beta_2-\beta_1\}}(-1)^{i_1}\cdot (i_1+\beta_1)\cdot \binom{m_1}{i_1}\cdot  (-1)^{\tau_2(i_1)}\cdot \binom{m_2-1}{\tau_2(i_1)-1}\nonumber \\
&=(-1)^{\beta_1-\beta_2}\cdot \sum_{i_1=\max\{0,\beta_2-\beta_1+1\}}^{\min\{m_1,m_2+\beta_2-\beta_1\}}  (i_1+\beta_1)\cdot \binom{m_1}{i_1}\cdot  \binom{m_2-1}{i_1+\beta_1-\beta_2-1}.
 \label{eqn|S12S} 
\end{align}

We treat the summation $S_2$ in a similar way. As before, $S_{21}$ gathers the terms for which $i_2$ is in the range $[0, \beta_1-\beta_2-1]$. Of course, if $\beta_1-\beta_2-1<0$, then there is no summation $S_{21}$. The summation $S_{22}$ contains the terms for which $i_2$ is in the range $[\beta_1-\beta_2,\mu_2]$. It is straightforward to prove that $S_{21}$ is $0$. Therefore,  
\begin{align}S_2&=(-1)^{\beta_2-\beta_1+1}\cdot\sum_{i_2=\max\{0,\beta_1-\beta_2\}}^{\min\{m_2,m_1+\beta_1-\beta_2-1\}}  (i_2+\beta_2)\cdot 
\binom{m_2}{i_2}\cdot
 \binom{m_1-1}{i_2+\beta_2-\beta_1} . \label{eqn|S22}
\end{align}
The summations $S_1$ and $S_2$ have the opposite signs.  The summation $S_1$ has the term $(i_1+\beta_1)$ that makes it possible to split the summation into two simpler ones $T_{11}$ and $T_{12}$. In a similar way we split $S_2$ into $T_{21}$ and $T_{22}$. 
\begin{align}
S_1&=(-1)^{\beta_1+\beta_2}\cdot (T_{11}+T_{12}), \quad
S_2=-(-1)^{\beta_1+\beta_2}\cdot (T_{21}+T_{22}), \;\mbox{where}\quad \label{eqn|sumSplitting} \\
T_{11}&= \sum_{i_1=\max\{0,\beta_2-\beta_1+1\}}^{\min\{m_1,m_2+\beta_2-\beta_1\}} i_1\cdot \binom{m_1}{i_1}\cdot  \binom{m_2-1}{i_1+\beta_1-\beta_2-1},\label{eqn|T11B}\\
T_{12}&=\beta_1\cdot\sum_{i_1=\max\{0,\beta_2-\beta_1+1\}}^{\min\{m_1,m_2+\beta_2-\beta_1\}}   \binom{m_1}{i_1}\cdot  \binom{m_2-1}{i_1+\beta_1-\beta_2-1},\label{eqn|T12}\\
T_{21}&=\sum_{i_2=\max\{0,\beta_1-\beta_2\}}^{\min\{m_2,m_1+\beta_1-\beta_2-1\}}  i_2\cdot 
\binom{m_2}{i_2}\cdot
 \binom{m_1-1}{i_2+\beta_2-\beta_1}, \label{eqn|T21B}\\
 T_{22}&=\beta_2\cdot\sum_{i_2=\max\{0,\beta_1-\beta_2\}}^{\min\{m_2,m_1+\beta_1-\beta_2-1\}}  
\binom{m_2}{i_2}\cdot
 \binom{m_1-1}{i_2+\beta_2-\beta_1}.\label{eqn|T22}
\end{align}
The summations $T_{11}$ and $T_{21}$ can be simplified by first observing that the summations cannot ever have a term corresponding to the index $0$. Then for $i_1,i_2\neq 0$, we use $i_1\cdot \binom{m_1}{i_1}=m_1\cdot\binom{m_1-1}{i_1-1}$ and $i_2\cdot \binom{m_2}{i_2}=m_2\cdot\binom{m_2-1}{i_2-1}$. After using these identities, we can shift the indices $i_1$ and $i_2$ by $1$ and obtain 
\begin{align}
T_{11}&= m_1\cdot\sum_{i_1=\max\{0,\beta_2-\beta_1\}}^{\min\{m_1-1,m_2+\beta_2-\beta_1-1\}}  \binom{m_1-1}{i_1}\cdot  \binom{m_2-1}{i_1+\beta_1-\beta_2},\label{eqn|T11}\\
T_{21}&=m_2\cdot\sum_{i_2=\max\{0,\beta_1-\beta_2-1\}}^{\min\{m_2-1,m_1+\beta_1-\beta_2-2\}}  
\binom{m_2-1}{i_2}\cdot
 \binom{m_1-1}{i_2+1+\beta_2-\beta_1}. \label{eqn|T21}
\end{align}
In each of \eqref{eqn|T11}, \eqref{eqn|T12}, \eqref{eqn|T21}, and \eqref{eqn|T22}, we apply $\binom{n}{k}=\binom{n}{n-k}$ to the second binomial coefficient. The sums turn into 
\begin{align}
T_{11}&= m_1\cdot\sum_{i_1=\max\{0,\beta_2-\beta_1\}}^{\min\{m_1-1,m_2+\beta_2-\beta_1-1\}}  \binom{m_1-1}{i_1}\cdot  \binom{m_2-1}{
m_2-1-
i_1-\beta_1+\beta_2},\label{eqn|T11R}\\
T_{12}&=\beta_1\cdot\sum_{i_1=\max\{0,\beta_2-\beta_1+1\}}^{\min\{m_1,m_2+\beta_2-\beta_1\}}   \binom{m_1}{i_1}\cdot  \binom{m_2-1}{m_2-i_1-\beta_1+\beta_2},\label{eqn|T12R}\\
T_{21}&=m_2\cdot\sum_{i_2=\max\{0,\beta_1-\beta_2-1\}}^{\min\{m_2-1,m_1+\beta_1-\beta_2-2\}}  
\binom{m_2-1}{i_2} \nonumber\\
&\quad\quad\times 
 \binom{m_1-1}{m_1-1-i_2-1-\beta_2+\beta_1}, \label{eqn|T21R}\\
  T_{22}&=\beta_2\cdot\sum_{i_2=\max\{0,\beta_1-\beta_2\}}^{\min\{m_2,m_1+\beta_1-\beta_2-1\}}  
\binom{m_2}{i_2}\cdot
 \binom{m_1-1}{m_1-1-i_2-\beta_2+\beta_1}.\label{eqn|T22R}
\end{align}
We apply \eqref{eqn|CatsAndDogs} to each of \eqref{eqn|T11R}, \eqref{eqn|T12R}, \eqref{eqn|T21R}, and \eqref{eqn|T22R}.
\begin{align}
T_{11}&= m_1\cdot \binom{m_1+m_2-2}{m_2-1+\beta_2-\beta_1},\label{eqn|T11R1}\\
T_{12}&=\beta_1\cdot \binom{m_1+m_2-1}{m_2+\beta_2-\beta_1},\label{eqn|T12R1}  \\
T_{21}&=m_2\cdot \binom{m_1+m_2-2}{m_1+\beta_1-\beta_2-2}=m_2\cdot \binom{m_1+m_2-2}{m_2+\beta_2-\beta_1}, \label{eqn|T21R1}\\
T_{22}&=\beta_2\cdot \binom{m_1-1+m_2}{m_1-1+\beta_1-\beta_2}=\beta_2\cdot \binom{m_1+m_2-1}{m_2+\beta_2-\beta_1}.\label{eqn|T22R1}
\end{align}
From \eqref{eqn|T11R1} and  \eqref{eqn|T21R1} we obtain 
\begin{align}
T_{11}-T_{21}&=
\frac{(m_1+m_2-2)!\cdot \left((m_1+m_2)(\beta_2-\beta_1)+m_2\right)}{(m_2+\beta_2-\beta_1)!\cdot(m_1-1+\beta_1-\beta_2)!}\nonumber\\
&=\binom{m_1+m_2-1}{m_2+\beta_2-\beta_1}\cdot \frac{ \left((m_1+m_2)(\beta_2-\beta_1)+m_2\right)}{m_1+m_2-1}.
\label{eqn|T11mT21}
\end{align} 
We now use \eqref{eqn|S1PlusS2},  
\eqref{eqn|sumSplitting}, \eqref{eqn|T11mT21}, \eqref{eqn|T12R1}, and 
\eqref{eqn|T22R1} to obtain
\begin{align}
\frac{(-1)^{m_1+m_2}}{b-a}\partial_Sf(\omega)
&=(-1)^{\beta_1+\beta_2}\cdot\left(T_{11}+T_{12}-T_{21}-T_{22}\right).
\label{eqn|AfterSubstitution}
\end{align}
Substituting \eqref{eqn|T11mT21}, \eqref{eqn|T12R1}, and \eqref{eqn|T22R1} 
and simplifying yields
\begin{align}
\frac{(-1)^{m_1+m_2}}{b-a}\partial_Sf(\omega)
&=(-1)^{\beta_1+\beta_2}\cdot \binom{m_1+m_2-1}{m_2+\beta_2-\beta_1}\nonumber\\
&\times\left(\frac{\left((m_1+m_2)(\beta_2-\beta_1)+m_2\right)}{m_1+m_2-1}
+\beta_1-\beta_2\right) \nonumber \\
&=(-1)^{\beta_1+\beta_2}\cdot \binom{m_1+m_2-1}{m_2+\beta_2-\beta_1}
\cdot \frac{m_2+\beta_2-\beta_1}{m_1+m_2-1}. \label{eqn|Beforenm1chkm1}
\end{align}
The equation \eqref{eqn|Beforenm1chkm1} implies \eqref{eqn|dLanes} 
due to $\binom{n}{k}\cdot \frac{k}{n}=\binom{n-1}{k-1}$.
\end{proof} 

\end{appendix}

\end{document}